\theoremstyle{plain}
\newtheorem{theorem}{Theorem}[section]
\newtheorem{maintheorem}{Theorem}
\newtheorem{maincor}[maintheorem]{Corollary}
    \crefname{maincor}{corollary}{corollaries}
    \crefname{problem}{problem}{problems}
\newtheorem{proposition}[theorem]{Proposition}
    \crefname{proposition}{proposition}{propositions}
\newtheorem{lemma}[theorem]{Lemma}
    \crefname{lemma}{lemma}{lemmas}
    \crefname{conjecture}{conjecture}{conjectures}
    \crefname{fact}{fact}{facts}
\newtheorem{corollary}[theorem]{Corollary}
    \crefname{corollary}{corollary}{corollaries}
    \crefname{claim}{claim}{claims}
\theoremstyle{definition}
\newtheorem{definition}[theorem]{Definition}
    \crefname{definition}{definition}{definitions}
    \crefname{example}{example}{examples}
    \crefname{construction}{construction}{constructions}
    \crefname{exercise}{exercise}{exercises}
\newtheorem{remark}[theorem]{Remark}
    \crefname{remark}{remark}{remarks}
\newtheorem{question}[theorem]{Question}
    \crefname{question}{question}{questions}
    \crefname{answer}{answer}{answers}
    \crefname{convention}{convention}{conventions}
\newcommand{\nc}{\newcommand}
\nc{\dmo}{\DeclareMathOperator}
\nc{\OO}{\mathcal{O}}
\nc{\cA}{\mathcal{A}}
\nc{\cB}{\mathcal{B}}
\nc{\sB}{\mathscr{B}}
\nc{\C}{\mathbb{C}}
\nc{\cC}{\mathcal{C}}
\nc{\sC}{\mathscr{C}}
\nc{\BB}{\mathbb{B}}
\nc{\LL}{\mathcal{L}}
\nc{\bd}{\mathbf{d}}
\nc{\DD}{\mathbb{D}}
\nc{\cD}{\mathcal{D}}
\nc{\sD}{\mathscr{D}}
\nc{\bF}{\mathbb{F}}
\nc{\cF}{\mathcal{F}}
\nc{\cG}{\mathcal{G}}
\nc{\cI}{\mathcal{I}}
\nc{\cK}{\mathcal{K}}
\nc{\cL}{\mathcal{L}}
\nc{\cM}{\mathcal{M}}
\nc{\bM}{\mathbf{M}}
\nc{\N}{\mathbb{N}}
\nc{\cN}{\mathcal{N}}
\nc{\cO}{\mathcal{O}}
\nc{\bp}{\mathbf{p}}
\nc{\bP}{\mathbb{P}}
\nc{\cP}{\mathcal{P}}
\nc{\Q}{\mathbb{Q}}
\nc{\R}{\mathbb{R}}
\nc{\cS}{\mathcal{S}}
\nc{\cT}{\mathcal T}
\nc{\cU}{\mathcal U}
\nc{\bV}{\mathbb V}
\nc{\cX}{\mathcal{X}}
\nc{\cY}{\mathcal{Y}}
\nc{\Z}{\mathbb{Z}}
\nc{\disk}{\mathbb{D}}
\nc{\hyp}{\mathbb{H}}
\nc{\CP}{\mathbb{CP}}
\nc{\RP}{\mathbb{RP}}
\dmo{\Mod}{Mod}
\dmo{\PMod}{PMod}
\dmo{\LMod}{LMod}
\dmo{\Diff}{Diff}
\dmo{\Homeo}{Homeo}
\dmo{\dist}{dist}
\dmo\BDiff{BDiff}
\dmo\SO{SO}
\dmo\Sp{Sp}
\dmo\Hom{Hom}
\dmo\SL{SL}
\dmo\rank{rank}
\dmo\sig{sig}
\dmo\Out{Out}
\dmo\Aut{Aut}
\dmo\Inn{Inn}
\dmo\GL{GL}
\dmo\PGL{PGL}
\dmo\Gr{Gr}
\dmo\PSL{PSL}
\dmo\BHomeo{BHomeo}
\dmo\EHomeo{EHomeo}
\dmo\EDiff{EDiff}
\dmo\Disc{Disc}
\dmo\Aff{Aff}
\renewcommand{\bar}{\overline}
\dmo\Teich{Teich}
\dmo\Fix{Fix}
\nc{\pair}[1]{\ensuremath{\left\langle #1 \right\rangle}}
\nc{\abs}[1]{\ensuremath{\left| #1 \right|}}
\nc{\action}{\circlearrowright}
\nc{\abcd}[4]{\ensuremath{\left(\begin{array}{cc} #1 & #2 \\ #3 & #4 \end{array}\right)}}
\dmo{\Isom}{Isom}
\nc{\normal}{\vartriangleleft}
\dmo{\Vol}{Vol}
\dmo{\im}{Im}
\dmo{\Push}{Push}
\dmo{\Conf}{Conf}
\dmo{\PConf}{PConf}
\dmo{\PB}{PB}
\dmo{\id}{id}
\dmo{\Jac}{Jac}
\dmo{\Pic}{Pic}
\dmo{\Stab}{Stab}
\dmo{\Arf}{Arf}
\dmo{\End}{End}
\dmo{\Ext}{Ext}
\dmo{\Gal}{Gal}
\dmo{\lcm}{lcm}
\dmo{\ab}{ab}
\dmo{\opp}{op}
\dmo{\SU}{SU}
\dmo{\OT}{\Omega \mathcal{T}}
\dmo{\OM}{\Omega \mathcal{M}}
\dmo{\PH}{\mathbb{P}\mathcal{H}}
\dmo{\spin}{spin}
\dmo{\even}{even}
\dmo{\odd}{odd}
\dmo{\comp}{\mathcal{H}}
\dmo{\Mgk}{\mathcal{M}_{g, \underline{\kappa}}}
\dmo{\orb}{orb}
\dmo{\AJ}{AJ}
\dmo{\Ck}{\mathsf{C}(\underline{\kappa})}
\dmo{\Int}{Int}
\dmo{\pr}{pr}
\dmo{\lab}{lab}
\dmo{\Sym}{Sym}
\dmo{\Ann}{Ann}
\dmo{\Rad}{Rad}
\dmo{\Ind}{Ind}
\dmo{\Div}{Div}
\dmo{\Res}{Res}
\dmo{\Hur}{Hur}
\dmo{\vcd}{vcd}
\dmo{\codim}{codim}
\nc{\Span}[1]{\operatorname{Span}(#1)}
\renewcommand{\epsilon}{\varepsilon}
\renewcommand{\tilde}{\widetilde}
\renewcommand{\le}{\leqslant}
\renewcommand{\ge}{\geqslant}
\nc{\coloneq}{\mathrel{\mathop:}\mkern-1.2mu=}
\nc{\margin}[1]{\marginpar{\scriptsize #1}}
\nc{\para}[1]{\medskip\noindent\textbf{#1.}}
\definecolor{myblue}{RGB}{102,153, 255}
\definecolor{myred}{RGB}{204,0,0}
\definecolor{mygreen}{RGB}{0,204,0}
\definecolor{myorange}{RGB}{255,102,0}
\definecolor{mypurple}{RGB}{138,43,226}
\nc{\red}[1]{\textcolor{myred}{#1}}
\nc{\blue}[1]{\textcolor{myblue}{#1}}
\newcommand{\maxdim}{3g-3}
\author[Kaufmann, Salter, Zhang, Zhong]{Julian Kaufmann, Nick Salter, Zhong Zhang, Xiyan Zhong}
\address{JK: Department of Mathematics, University of California, Berkeley, 970 Evans Hall, Berkeley, CA 94720}
\email{jkaufma2@alumni.nd.edu}
\address{NS: Department of Mathematics, University of Notre Dame, 255 Hurley Building, Notre Dame, IN 46556}
\email{nsalter@nd.edu}
\address{ZZ: Department of Mathematics, University of Chicago, 5734 S. University Ave., Chicago, IL 60637}
\email{zhongz@uchicago.edu}
\address{XZ: Max Planck Institute for Mathematics, Vivatsgasse 7, 53111 Bonn, Germany}
\email{xiyanmath@gmail.com}
\title[Linear reps of the mapping class group of dimension at most $3g-3$]{Linear representations of the mapping class group of dimension at most $3g-3$}
\date{May 28, 2026}
\begin{document}

\begin{abstract}
    We classify representations of the mapping class group of a surface of genus $g$ (with at most one puncture or boundary component) up to dimension $3g-3$. Any such representation is the direct sum of a representation in dimension $2g$ or $2g+1$ (given as the action on the (co)homology of the surface or its unit tangent bundle) with a trivial representation. As a corollary, any linear system on the moduli space of Riemann surfaces of genus $g$ in this range is of algebro-geometric origin.
\end{abstract}

\maketitle

\section{Introduction}
Let $S$ be an oriented surface of genus $g$, either closed, with one puncture, or with one boundary component (notated as $\Sigma_g, \Sigma_{g,*}, \Sigma_{g,1}$, respectively). Assume $g \ge 3$ throughout. Let $\Mod(S)$ denote the mapping class group of $S$.
This paper gives a classification of complex representations of $\Mod(S)$ in the dimension range $n \le \maxdim$. In dimension $2g$, there is the the {\em symplectic representation}
\[
\Psi: \Mod(S) \to \GL(H),
\]
where $H = H_1(S;\C)$ denotes the first homology of $S$, equipped with the intersection pairing $\pair{\cdot, \cdot}$. When $S$ is not closed, there is a representation of dimension $2g+1$
\[
\tilde \Psi: \Mod(S) \to \GL(\tilde H),
\]
where $\tilde H \cong \C^{2g+1}$ is a non-semisimple representation surjecting onto $H$ given by the action on the homology of the unit tangent bundle of $S$; see \Cref{subsection:KK}. There is also the non-isomorphic dual representation $\tilde H^*$.

Our main result shows that this gives a {\em complete} list of representations up to dimension $\maxdim$.

\begin{maintheorem}\label{theorem:main}
    Let $g \ge 3$, and let $\rho: \Mod(S) \to \GL(n, \C)$ be a nontrivial representation. Then for $n \le \maxdim$, $\rho$ is the direct sum of a trivial representation with one of the following:
    \[
    H, \tilde H, \mbox{ or the dual } \tilde H^*.
    \]
    If $S$ is closed, only $H$ can appear.
\end{maintheorem}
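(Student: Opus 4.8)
The plan is to proceed by induction on the dimension $n$ and to leverage the low-dimensional representation theory of the symplectic group $\Sp(2g,\Z)$, together with strong constraints coming from the Johnson filtration and known generating sets for $\Mod(S)$. The starting point is that $\Mod(S)$ is perfect for $g \ge 3$, so any representation has image in $\SL(n,\C)$, and moreover any finite quotient is trivial in this range, so we may focus on the Zariski closure $G$ of the image. The first main step is to understand the restriction of $\rho$ to the Torelli group $\cI(S) = \ker \Psi$. Because $\cI(S)$ is generated by bounding pair maps and separating twists, and because of the existence of many commuting and conjugate copies of these elements realized by subsurface embeddings, I would argue that the restriction $\rho|_{\cI(S)}$ must be either trivial or, after passing to a quotient, factor through the abelianization-type data detected by the Johnson homomorphism $\tau: \cI(S) \to \wedge^3 H / H$. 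The key numerical input is that $\dim \wedge^3 H/H = \binom{2g}{3} - 2g$, which already vastly exceeds $3g-3$ for $g \ge 3$; hence any nontrivial $\Mod(S)$-subquotient of a representation of dimension $\le 3g-3$ on which $\cI(S)$ acts through $\tau$ is impossible, forcing $\rho|_{\cI(S)}$ to be trivial (more precisely, unipotent with trivial semisimplification, and then genuinely trivial after controlling extensions).

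Once $\rho|_{\cI(S)}$ is trivial, $\rho$ factors through $\Sp(2g,\Z)$, so the second main step is a classification of representations of $\Sp(2g,\Z)$ of dimension $\le 3g-3$. Here I would invoke rigidity/superrigidity-type results and the classification of low-dimensional representations of $\Sp(2g,\Z)$ (for $g$ large, these are rigid by Margulis and by work on the congruence subgroup property, and the minimal faithful ones have dimension $2g$): any such representation is, up to a trivial summand, a sub of a tensor/exterior power of the standard representation $H$, and a dimension count using $\dim \wedge^2 H = g(2g-1)$, $\dim \Sym^2 H = g(2g+1)$ shows nothing new fits below $3g-3$ besides $H$ itself and trivial summands. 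To recover $\tilde H$ and $\tilde H^*$, I would note these arise precisely as the non-split extensions of $H$ by the trivial representation (equivalently from $H^1(\Mod(S); H) \cong \C$, a computation recalled in \Cref{subsection:KK}); so the genuinely non-semisimple possibilities are exactly these two, distinguished by whether the extension is "upper" or "lower" triangular. Assembling: $\rho$ is semisimple on the Torelli quotient, its semisimplification is $H^{\oplus a} \oplus (\text{trivial})$, dimension forces $a \le 1$, and then extension data forces $\rho$ to be one of $H$, $\tilde H$, $\tilde H^*$, plus a trivial summand. The closed case excludes $\tilde H^{\pm}$ because the relevant cohomology class $H^1(\Mod(S_g); H)$ vanishes when $S$ is closed (the unit tangent bundle class is not available), which I would cite rather than reprove.

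The hard part, I expect, is the \emph{first} step: ruling out the Johnson-homomorphism possibility and, more subtly, handling the unipotent/extension issues so that "acts through $\tau$ on a subquotient" is genuinely excluded rather than merely the semisimple part being controlled. The subtlety is that $\rho|_{\cI(S)}$ need not be semisimple even if $\rho$ is; one must show that the $\cI(S)$-action on $\C^n$ is \emph{trivial}, not just unipotent with trivial composition factors, because $\tilde H$ is itself an example where a unipotent piece survives — but there the unipotent piece is detected by $\Sp(2g,\Z)$, not by $\cI(S)$. I would control this using the fact that $\cI(S)$ contains a copy of, say, $\Mod$ of a subsurface's Torelli group sitting inside in many ways, together with the action of the subgroup of $\Mod(S)$ stabilizing a subsurface (so that $\Sp$-irreducibility of $\wedge^3 H/H$ can be brought to bear), and the Birman–Craggs–Johnson homomorphisms detecting the $2$-torsion part. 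A secondary obstacle is making the $\Sp(2g,\Z)$ classification uniform in $g \ge 3$ rather than only for large $g$: for small $g$ one may need a direct argument using explicit generators (e.g. the fact that $\Sp(4,\Z)$ and $\Sp(6,\Z)$ still have no faithful representations below dimension $2g$), which I would handle as special cases. Everything else — perfectness, the cohomology computations in \Cref{subsection:KK}, the exterior/symmetric power dimension counts — is routine bookkeeping.
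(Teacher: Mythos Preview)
Your proposal contains a genuine and fatal gap in the first main step. You aim to show that $\rho|_{\cI(S)}$ is trivial, so that $\rho$ factors through $\Sp(2g,\Z)$, and then to classify $\Sp(2g,\Z)$-representations in this dimension range. But $\rho|_{\cI(S)}$ is \emph{not} trivial for $\rho = \tilde\Psi$: the Torelli group acts nontrivially (unipotently) on $\tilde H$. Indeed, $H^1(\Sp(2g,\Z);H) = 0$, so any $\Sp(2g,\Z)$-representation with composition factors $H$ and $\C$ splits; the non-split extension $\tilde H$ exists precisely because $H^1(\Mod(S);H) \cong \C \ne 0$, and the extra cohomology is carried by the Torelli group (e.g.\ by the point-pushing subgroup, cf.\ \Cref{lemma:abonpush}). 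Your parenthetical that ``the unipotent piece is detected by $\Sp(2g,\Z)$, not by $\cI(S)$'' is exactly backwards. Consequently your second step (classify $\Sp(2g,\Z)$-representations) and your recovery of $\tilde H$ via $H^1(\Mod(S);H)$ are mutually inconsistent: if $\rho$ factored through $\Sp(2g,\Z)$ you would never see $\tilde H$.

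The dimension argument invoking $\dim(\wedge^3 H/H)$ is also a non-sequitur. That the target of the Johnson homomorphism is large does not prevent the abelian group $\cI(S)/\cK(S)$ from acting nontrivially on a low-dimensional space: an abelian group of arbitrary rank can act unipotently on $\C^2$. What the paper actually proves is the weaker (but correct) statement that the Johnson \emph{kernel} $\cK(S)$ acts trivially; this is not a dimension count but an explicit computation. One first shows that nonseparating Dehn twists act unipotently with $1$-eigenspace of codimension exactly one (\Cref{prop:unipotence}, \Cref{prop:codimenison}), which forces $\rho(T_c)(x) = x + \alpha_c(x)v_c$ for suitable $\alpha_c,v_c$. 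The braid and commuting relations then pin down the pairings $\alpha_{c}(v_{c'})$ completely, and a direct matrix calculation with the chain relations $T_{\Delta_1} = (T_a^2 T_b)^4$ and $T_{\Delta_2} = (T_a^2 T_b T_c T_d)^8$ shows that separating twists of genus $1$ and $2$ act trivially (\Cref{prop:kgtriv}). With $\cK(S)$ trivial and twists unipotent, the Torelli group acts unipotently through its abelianization, and one extracts a filtration $V_1 \le V_2 \le V$ with trivial ends and core $H$ (\Cref{prop:whenDCS}); the classification of such ``bi-affine'' representations is then a cohomological computation (\Cref{prop:whencoboundary}, \Cref{cor:biafftriv}) showing that at most one of the two extension classes can be nonzero. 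The induction is on $n$, intertwined across these propositions, and bottoms out in Korkmaz's theorems.
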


This extends the work of Korkmaz \cite{korkmaz}, who showed the uniqueness of the symplectic representation up to dimension $2g$, and Kasahara \cite{kasahara}, who classified representations of dimension $2g+1$.

\para{Idea of proof} Our analysis is centered around the notion of a {\em bi-affine representation}. A group representation $V$ is bi-affine if there is a filtration $V_1 \le V_2 \le V$ for which $V_1$ and $V/V_2$ are both trivial. Building off of the ideas developed by Korkmaz and Kasahara, we show inductively that every representation of $\Mod(S)$ up to dimension $3g-3$ is bi-affine. To do this, we develop a criterion for a $\Mod(S)$-representation to be bi-affine in \Cref{prop:whenDCS}. The other key results, \Cref{prop:unipotence,prop:codimenison,prop:kgtriv}, establish conditions under which the hypotheses of \Cref{prop:whenDCS} hold.
Separately, we show that any bi-affine representation of $\Mod(S)$ is in fact a direct sum of a trivial representation with a representation of dimension $2g+1$. This is a consequence of a homological calculation carried out in \Cref{prop:whencoboundary}. 

It is reasonable to ask why we use the language of bi-affine representations at all, if ultimately we are showing a stronger result. The answer is that we believe that this leads to the cleanest proofs of our results, with the least amount of fussing about cases and choosing coordinates. It is relatively painless to formulate a condition under which an extension of a bi-affine representation of $\Mod(S)$ remains bi-affine (cf. \Cref{lemma:againDCS}); the corresponding statement for an extension to be a direct sum with a trivial representation would be more elaborate, and would require us to carry around the data of the splitting for longer than necessary.

\para{Interpretation in terms of local systems on moduli spaces} 
Local systems on the moduli space $\mathcal{M}$ of Riemann surfaces are determined by monodromy representations
\[
\pi_1^{\mathrm{orb}}(\mathcal{M}) \to \GL(V),
\]
where $\pi_1^{\mathrm{orb}}(\mathcal{M})$ is the mapping class group. A local system $\bV$ on $\mathcal{M}$ is {\em of geometric origin} if there is a family $E \to \cM$ of smooth projective varieties over $\cM$ such that $\bV$ is a subquotient of the local system of (co)homology associated with $E$. It is conjectured that all semi-simple representations of mapping class groups are of geometric origin \cite{litt2024motivesmappingclassgroups}. 

Local systems of geometric origin are necessarily semi-simple, while the local systems associated with $\tilde H$ and $\tilde H^*$ are not. For the purposes of this discussion, we will say that a local system {\em arises algebro-geometrically} if it is the monodromy of a family as above, where the fibers of the morphism are now only required to be quasiprojective, but are still required to be a topological fibration. Theorem \ref{theorem:main} imposes strong constraints on the local systems that can appear on moduli spaces of closed surfaces or of surfaces with a puncture -- up to rank $3g-3$, they must arise algebro-geometrically.

\begin{maincor}
    For $g\geq 3$, any local system of rank at most $3g-3$ on $\cM_{g,1}$ or $\cM_{g}$ arises algebro-geometrically. Here are the families of algebraic varieties: 
    \begin{itemize}
    \item for the symplectic representation \( H \), it is the universal family of Riemann surfaces;
    \item for the representations \( \tilde{H} \) and \( \tilde{H}^* \), it is the relative tangent bundle of the universal family of curves with the zero section removed - the fiber of this bundle is homotopy equivalent to the unit tangent bundle of surfaces.
\end{itemize} 
\end{maincor}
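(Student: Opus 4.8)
The plan is to deduce the Corollary from \Cref{theorem:main} by writing down explicit algebro-geometric realizations of the finitely many representations it allows. I would begin from the standard dictionary between local systems and mapping class group representations: the orbifold fundamental group of $\cM_g$ is $\Mod(S)$ for $S$ a closed genus-$g$ surface, that of $\cM_{g,1}$ is $\Mod(S)$ for $S = S_{g,1}$ the once-punctured (equivalently once-marked) surface, and a rank-$n$ local system on $\cM \in \{\cM_g, \cM_{g,1}\}$ is the same datum as an $n$-dimensional complex representation $\rho$ of this group. If $\rho$ is trivial, the local system is constant and is realized by the trivial family with finite fibers (for instance by $R^0\pi_*\C$ for the universal curve $\pi$). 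If not, then since $n \le \maxdim$, \Cref{theorem:main} gives $\rho \cong T \oplus W$ with $T$ trivial and $W \in \{H, \tilde H, \tilde H^*\}$, only $W = H$ being possible when $S$ is closed. Since the class of local systems arising algebro-geometrically is closed under subquotients (a subquotient of a subquotient is a subquotient) and under finite direct sums (pass to the fiberwise disjoint union of the witnessing families, under which the associated (co)homology local systems add up), it suffices to realize each of $H$, $\tilde H$, $\tilde H^*$ separately.

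For $H$: the universal family of Riemann surfaces $\pi: \cC \to \cM$ has $R^1\pi_*\C$ with monodromy the dual symplectic representation, and because the intersection pairing identifies $H \cong H^*$, the representation $H$ itself arises as well (for instance from the homology local system $R_1\pi_*\C$); as the fibers are projective curves, this local system is in fact geometrically constructible. For $\tilde H$ and $\tilde H^*$: working over $\cM_{g,1}$, let $\pi: \cC \to \cM_{g,1}$ be the universal curve with tautological section $\sigma$, let $\cC^\circ = \cC \setminus \sigma(\cM_{g,1})$ be the universal once-punctured curve, and let $p: E \to \cM_{g,1}$ be the complement of the zero section inside the restriction $T_{\cC/\cM_{g,1}}|_{\cC^\circ}$ of the relative tangent line bundle. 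Then $E \to \cM_{g,1}$ is a family of smooth quasiprojective varieties whose fiber over $[(X,x)]$ is $T(X \setminus x)\setminus\{0\}$, a $\C^*$-bundle over the punctured curve $X \setminus x$; since $X \setminus x$ is homotopy equivalent to a wedge of $2g$ circles this bundle is (non-canonically) trivial, so the fiber is homotopy equivalent to the unit tangent bundle of $S_{g,1}$ and its first homology is $\C^{2g+1}$. By the definition of $\tilde H$ recalled in \Cref{subsection:KK} as the $\Mod(S_{g,1})$-action on the homology of this unit tangent bundle, the monodromy of $R_1 p_*\C$ is $\tilde H$ and that of $R^1 p_*\C$ is $\tilde H^*$; both are summands of the (co)homology local systems of the quasiprojective family $E$, hence arise algebro-geometrically. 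Neither is geometrically constructible, since $\tilde H$ and $\tilde H^*$ are not semisimple --- which is exactly why the fibers must be taken noncompact.

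The step with real content is the last identification: verifying that the monodromy of $E \to \cM_{g,1}$ agrees \emph{on the nose} with $\tilde H$ and $\tilde H^*$, i.e. that the non-split extension coming from the fiber-class (Gysin) filtration on the first homology of the $\C^*$-bundle coincides, as an extension of $\Mod(S_{g,1})$-modules and in the correct direction, with the extension defining $\tilde H$, and that the dualities are tracked correctly (the self-duality $H \cong H^*$, and which of $\tilde H$, $\tilde H^*$ is homology versus cohomology). Once the circle-bundle homology is unwound $\Mod$-equivariantly this is immediate. The remaining bookkeeping --- that local systems on the moduli stack correspond to mapping class group representations, and that fiberwise disjoint unions and total spaces of line bundles with the zero section removed stay within the category of families of quasiprojective varieties over $\cM$ --- is routine and would be dispatched quickly.
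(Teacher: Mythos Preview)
The paper does not prove this corollary explicitly; it is stated in the introduction as an immediate consequence of \Cref{theorem:main}, with the relevant families simply named in the statement. Your proposal is correct and is precisely the argument the paper leaves implicit: you have supplied the routine translation to $\Mod(S)$-representations, the reduction via \Cref{theorem:main}, and the verification that the listed families realize $H$, $\tilde H$, and $\tilde H^*$ (including the correct reading that for $\tilde H$ one must take the relative tangent bundle over the \emph{punctured} universal curve, so that the fiber class survives rationally and $H_1$ of the fiber has rank $2g+1$).
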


\para{Applications to rigidity} In \cite{rigidity}, Farb proves that any nonconstant holomorphic map $f: \cM_{g,n} \to \cA_g$ must be the period mapping assigning a Riemann surface to its Jacobian (here, $\cA_g$ denotes the moduli space of principally polarized Abelian varieties of dimension $g$). The first step in the argument is to appeal to Korkmaz's work classifying representations of the mapping class group up to dimension $2g$, as this governs the possibilities for the induced map on orbifold fundamental groups. The work of this paper opens the way to extending Farb's work to give a classification of holomorphic maps $\cM_g \to \cA_h$ in the range $2h \le 3g-3$. We plan to revisit this topic in future work.

\para{Beyond Theorem \ref{theorem:main}} Here we offer some discussion and speculation about how the results of the paper may be extended. As a first comment, since this paper was first released, work of Brevidelli \cite{brevidelli} shows that any representation of $\Mod(S)$ of dimension at most $4g-4$ is non-faithful; Brevidelli moreover identifies an explicit subgroup of the Torelli group that must be in the kernel of any such representation. 

There are at least three axes along which the results of the paper could potentially be extended:
\begin{enumerate}
    \item Improving the dimension range in which all representations $\rho: \Mod(S) \to \GL(n,\C)$ are classified. In particular, identifying the next dimension $n$ in which an irreducible representation appears.
    \item Improving the result to handle general finite-type surfaces $S$ with an arbitrary number of boundary components and/or punctures.
    \item Improving the result to incorporate representations of (some class of) finite-index subgroups of $\Mod(S)$.
\end{enumerate}

As regards (1), the first question is the extent to which the range $n \le 3g-3$ in \Cref{theorem:main} is sharp. We do not know of any new representations appearing in dimension $n = 3g-2$. The bottleneck imposing the bound $n \le 3g-3$ of \Cref{theorem:main} is derived from \Cref{prop:codimenison}, which shows that in this range, any representation has Dehn twists acting as {\em transvections}.   \Cref{prop:unipotence,prop:whenDCS} hold in the larger range $n \le 4g-1$, resp. $n \le 4g-3$ while the remaining cornerstone result, \Cref{prop:kgtriv}, holds in any dimension.

There is still a large gulf between a conjectural improved range $n \le 4g-3$ in \Cref{theorem:main} and the dimensions of other known irreps of $\Mod(S)$. After the standard representation $\Psi$ acting on $H_1(S;\C)$, the next-smallest irreducible representation of $\Sp(2g;\Z)$ is $\wedge^2 H_1(S;\C)/\C$ of dimension $n = \binom{2g}{2}-1$, whose dimension grows {\em quadratically} in $g$. To the authors' knowledge, this (or more precisely, its pullback to $\Mod(S)$) is the next known irrep.

\begin{question}
   Does $\Mod(S)$ admit any irreducible representation of dimension $3g-2 \le n < \binom{2g}{2}-1$?
\end{question}

 The picture of irreducible representations of $\Mod(S)$ is even murkier when one restricts attention to those representations not factoring through the symplectic group. Constructions of such arise via the method of so-called {\em Prym representations}. The general idea here is to consider a finite covering $p:\tilde S \to S$, to which there is associated a finite-index subgroup $\Mod(S)[p] \le \Mod(S)$ consisting of mapping classes that lift to $\tilde S$. There is an action of $\Mod(S)[p]$ on $H_1(\tilde S;\Z)$ which can be inducted up to a representation of $\Mod(S)$ and subsequently decomposed into irreps. However, to the authors' knowledge, these constructions always have dimension {\em exponential} in the genus of $S$.

 \begin{question}
     Does $\Mod(S)$ admit any irreducible representation that does not factor through $\Sp(2g,\Z)$ whose dimension is polynomial in $g$?
 \end{question}

Regarding (2), there are some genuine novelties that arise when $S$ has more boundary components/punctures, say $p$ in total. Specifically, there is an action on $H_1(S;\Z)$, which is free of rank $2g+p-1$. This is non-semisimple, and indeed co-affine (cf. \Cref{def:biaff}), as the action on the subspace spanned by peripheral classes is trivial. It is reasonable to conjecture that in a certain range, every representation of $\Mod(S)$ continues to arise from an action on the (co)homology of $S$ or its unit tangent bundle; this seems like a reasonable target for future work.

Finally, with regards to (3), extending the classification to finite-index subgroups is in general a massive undertaking. For instance, the {\em Ivanov conjecture} posits that there is no finite-index subgroup of $\Mod(S)$ with nontrivial rational abelianization, i.e. that no finite-index subgroup admits a character with infinite image. While every finite-index subgroup contains some power of every Dehn twist, these will in general fragment into many distinct conjugacy classes, making their analysis vastly more complicated.
Moreover special relations holding between individual such twists evaporate upon passing to powers, and so the specific techniques (e.g. in \Cref{section:transvective}) used to obtain our results do not scale up. 

It may be more reasonable to ask for rigidity results for specific, well-understood subgroups. For instance, returning to the setting of Prym representations, let $x \in H^1(S; \Z/2\Z)$ be a nonzero vector, and define $\Mod(S)[x]$ as the stabilizer of $x$ under the action of $\Mod(S)$ on $H^1(S;\Z/2\Z)$. Such $x$ defines a $\Z/2\Z$-cover $\tilde S \to S$ (with $\tilde S$ consequently of genus $2g-1$), and there is a Prym representation $\rho: \Mod(S)[x] \to \Aut(H_1(\tilde S;\Z)) = \Sp(4g-2;\Z)$ obtained by acting on the homology of this cover (see \cite{looijengaprym} for a detailed analysis of the image of such maps). This representation splits into two summands via the eigenspace decomposition of $H_1(\tilde S;\Z)$ under the deck group action; the nontrivial eigenspace has rank $2g-2$, so there is an irrep of $\Mod(S)[x]$ of this dimension, in addition to the restriction of the symplectic rep. It was shown by Serv\'an \cite{servan} that this is the nontrivial representation of $\Mod(S)[x]$ of lowest dimension. How far can results of this nature be extended?

\para{Acknowledgments} We would like to thank Benson Farb, Nate Harman, Yasushi Kasahara, Aaron Landesman, Daniel Litt, and Andy Putman for their input. We are also grateful to an anonymous referee for their close reading and many helpful suggestions that led to a substantial improvement in the exposition. The second-named author is supported by NSF grant DMS-2338485. 

\section{A recollection of prior results}
\subsection{The work of Korkmaz and Kasahara}\label{subsection:KK}
The present work is deeply indebted to the papers \cite{korkmaz,kasahara} of Korkmaz and Kasahara. We recall their main results here.

\begin{theorem}[Korkmaz, Theorem 1 of \cite{korkmaz}]\label{thm:korkmaz2g-1}
For $g \ge 3$ and $n \le 2g -1$, any homomorphism $\rho: \Mod(S) \to \GL(n,\C)$ is trivial.
\end{theorem}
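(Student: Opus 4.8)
\para{Proof proposal}
The plan is to show that $\rho(T_c)=I$ for every Dehn twist $T_c$ about a nonseparating simple closed curve $c$; since for $g\ge 3$ these twists generate $\Mod(S)$ in all three cases (in the boundary case the boundary twist is itself a product of nonseparating twists by a chain relation), this forces $\rho$ to be trivial. By the change-of-coordinates principle all nonseparating curves form a single $\Mod(S)$-orbit, so the matrices $\rho(T_c)$ are mutually conjugate; let $A$ denote one of them. Two quick reductions: $\det\circ\rho$ is a homomorphism to the abelian group $\C^\times$, hence trivial because $\Mod(S)$ is perfect for $g\ge 3$ (a consequence of the lantern relation), so $\det A=1$; and it then suffices to pin down the eigenvalues and the Jordan type of $A$.

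First I would show that $A$ has a single eigenvalue. The raw material is the web of relations among nonseparating twists: a chain $c_1,\dots,c_{2g+1}$ of Humphries generators, along which adjacent twists braid, $T_{c_i}T_{c_{i+1}}T_{c_i}=T_{c_{i+1}}T_{c_i}T_{c_{i+1}}$, and nonadjacent ones commute; the lantern relation $T_{\delta_1}T_{\delta_2}T_{\delta_3}T_{\delta_4}=T_{\alpha_1}T_{\alpha_2}T_{\alpha_3}$, which for $g\ge 3$ can be realized with all seven curves nonseparating and the $\delta_i$ pairwise disjoint; and a hyperelliptic/chain relation exhibiting some product $T_{c_1}\cdots T_{c_k}$ as a torsion element. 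Applying $\rho$ yields a configuration of matrices all conjugate to $A$; a commuting family among them is simultaneously triangularizable, so the eigenvalues of the relevant products are entrywise products of eigenvalues of $A$ along a common flag, while the torsion element has diagonalizable image with root-of-unity spectrum. Tracking generalized-eigenspace dimensions through these identities, the bound $n\le 2g-1$ turns out to be incompatible with $A$ having two distinct eigenvalues; hence $A$ has a single eigenvalue $\lambda$, and since $\det A=1$ with $\lambda$ a root of unity, $\lambda=1$. Thus $A=I+N$ with $N$ nilpotent.

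It remains to kill $N$. One first argues that the Jordan blocks of $A$ have size $\le 2$, i.e.\ $N^2=0$: a $3\times 3$ unipotent block cannot coexist with the braid and commutation relations among conjugates of $A$ in dimension $\le 2g-1$. Granting $N^2=0$, expand the braid relation $(I+N_c)(I+N_{c'})(I+N_c)=(I+N_{c'})(I+N_c)(I+N_{c'})$ for $c,c'$ meeting once, and the commutation relation for disjoint $c,c'$: the resulting identities say precisely that $c\mapsto N_c$ transforms like a family of symplectic transvections, factoring through $H_1(S;\C)$ and compatible with the intersection form. The span of the images of the $N_c$ is then a nonzero $\Mod(S)$-subquotient carrying this transvection structure; but the only irreducible piece that can support it is the symplectic representation $H$, of dimension $2g$, so with $n\le 2g-1$ the structure must be identically zero. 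Hence $A=I$ and $\rho$ is trivial.

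The main obstacle is the passage from the entirely \emph{local} relations in the chain (only adjacent generators interact) to the \emph{global} conclusions of the two preceding paragraphs — single eigenvalue $1$, blocks of size $\le 2$, vanishing transvection data — with the sole numerical lever $n<2g$. Carrying the eigenvalue bookkeeping correctly across the braid moves and the lantern, and isolating exactly where the inequality is spent, is the technical heart; it is essentially the content of Korkmaz's argument in \cite{korkmaz}.
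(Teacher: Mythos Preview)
The paper does not give a proof of this statement: it is quoted as Theorem~1 of \cite{korkmaz} and used as a black box. So there is no ``paper's own proof'' to compare against; the relevant comparison is with Korkmaz's original argument.

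Your sketch is in the right spirit---analyze the Jordan type of $A=\rho(T_c)$ via the braid, commutation, and lantern relations---but it has a genuine gap at the step ``since $\det A=1$ with $\lambda$ a root of unity, $\lambda=1$.'' If $A$ has a single eigenvalue $\lambda$ then $\det A=\lambda^n$, so $\det A=1$ only gives $\lambda^n=1$; knowing in addition that $\lambda$ is a root of unity adds nothing. Some further relation is needed to pin down $\lambda=1$ (in Korkmaz's argument this comes from a more careful analysis: one uses the eigenspace decomposition of $\rho(T_c)$ to produce $\Mod(S\setminus c)$-invariant subspaces of smaller dimension and runs an induction on $n$, rather than trying to read off $\lambda$ directly from a single scalar identity). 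The subsequent steps---forcing $N^2=0$, and then extracting a ``transvection structure'' that would require a $2g$-dimensional subquotient---are plausible heuristics but, as you yourself note, are exactly the places where the inequality $n\le 2g-1$ must be spent, and you have not indicated how. Since you end by deferring to \cite{korkmaz} for precisely these points, the proposal is really a citation rather than a proof; that is also what the paper does.
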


\begin{theorem}[Korkmaz, Theorem 2 of \cite{korkmaz}]\label{thm:korkmaz2g}
    For $g \ge 3$, any homomorphism $\rho: \Mod(S) \to \GL(2g, \C)$ is either trivial or else is conjugate to the symplectic representation $\Psi: \Mod(S) \to \GL(H)$.
\end{theorem}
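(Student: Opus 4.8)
\medskip
\noindent\textbf{Proof proposal.} The plan is to bootstrap from \Cref{thm:korkmaz2g-1}. Fix a nontrivial homomorphism $\rho\colon \Mod(S)\to\GL(2g,\C)$. Recall that $\Mod(S)$ is generated by Dehn twists about nonseparating simple closed curves, and that any two such are conjugate in $\Mod(S)$; hence $N_c := \rho(T_c)$ has a conjugacy class in $\GL(2g,\C)$ --- in particular a Jordan form --- that is independent of the nonseparating curve $c$, and $\rho$ is nontrivial if and only if $N_c\neq I$. Since $H_1(\Mod(S);\Z)=0$ for $g\ge 3$, we have $\rho(\Mod(S))\subseteq\SL(2g,\C)$, so $\det N_c=1$.

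The decisive claim is that $N_c-I$ has rank $1$; equivalently, in view of $\det N_c=1$, that $N_c$ is a transvection $x\mapsto x+\phi_c(x)u_c$ with $\phi_c(u_c)=0$ (note $\det(I+u\phi)=1+\phi(u)$, so $\det=1$ forces $\phi(u)=0$). I would prove this by restricting $\rho$ to the centralizer of $T_c$ in $\Mod(S)$: this centralizer is, up to central and finite-index subtleties, the mapping class group of the surface obtained by cutting $S$ along $c$, which has genus $g-1$, and, commuting with $N_c$, it preserves each generalized eigenspace of $N_c$ together with the kernels and images of the operators $(N_c-\lambda)^j$. If $N_c-I$ had rank $\ge 2$, these would produce proper nonzero invariant subspaces of dimension small enough that \Cref{thm:korkmaz2g-1} (in genus $g-1$; the case $g=3$ needs a separate low-genus input), combined with the braid and commutation relations among the twists in a chain generating set, forces all of $\rho$ to be too degenerate to be nontrivial. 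This Jordan-form bookkeeping is the step I expect to be the main obstacle.

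Granting that each $N_c$ is a transvection, the rest of the argument is formal, and I would proceed as follows. The line $\ell_c := \im(N_c-I)=\C u_c$ varies equivariantly, $\rho(f)\ell_c=\ell_{f(c)}$, since $N_{f(c)}=\rho(f)N_c\rho(f)^{-1}$. Disjoint curves give commuting transvections, which forces $\ell_c=\ell_d$ or else $u_d\in\ker(N_c-I)$ and $u_c\in\ker(N_d-I)$; curves meeting once give the braid relation $N_cN_dN_c=N_dN_cN_d$, which for transvections is the $\SL(2,\C)$ relation and forces $\phi_c(u_d)\neq 0\neq\phi_d(u_c)$, hence $\ell_c\neq\ell_d$. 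Consequently $V_0:=\operatorname{Span}\{u_c : c\ \text{nonseparating}\}$ is a subrepresentation on which the action is nontrivial (take $c,d$ with $c\cdot d=1$: then $N_d u_c=u_c+\phi_d(u_c)u_d\ne u_c$), so by \Cref{thm:korkmaz2g-1} it cannot be a proper subspace; that is, $V_0=\C^{2g}$.

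Finally I would assemble the isomorphism with $\Psi$. Fixing a geometric symplectic basis $a_1,b_1,\dots,a_g,b_g$ of $S$, one checks that $u_{a_1},u_{b_1},\dots,u_{a_g},u_{b_g}$ is a basis of $\C^{2g}$ (any linear dependence would, via equivariance, produce a proper nonzero subrepresentation) and rescales the $u_c$ so that $\phi_{a_i}(u_{b_i})=1$, the scalings being pinned down up to one overall constant by the braid relations. The linear map $\Phi\colon H\to\C^{2g}$ with $\Phi([a_i])=u_{a_i}$ and $\Phi([b_i])=u_{b_i}$ is then $\Mod(S)$-equivariant --- verified on the chain generators using the transvection formulas together with the braid and commutation relations above --- and it is an isomorphism since $V_0=\C^{2g}$ and the dimensions agree. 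Hence $\rho$ is conjugate to $\Psi$, while if $N_c=I$ then $\rho$ is trivial, which is the claimed dichotomy.
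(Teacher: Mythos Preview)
This theorem is not proved in the present paper; it is quoted from Korkmaz \cite{korkmaz} in the survey of prior results in Section~2, and the paper relies on it as a black box. There is therefore no proof here to compare your proposal against.

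That said, your outline tracks the shape of Korkmaz's original argument, and the later sections of this paper (the proof of \Cref{prop:unipotence} and the theory of transvective representations in \Cref{lemma:transvection}--\Cref{lemma:normalize}) carry out closely analogous steps in the higher-dimensional setting, so you can see there what a complete version looks like. The step you flag as the main obstacle --- that $N_c-I$ has rank one --- is genuinely the crux, and your sketch is too thin: one must first rule out eigenvalues $\lambda\neq 1$ and then control the Jordan structure, and neither follows from a one-line appeal to \Cref{thm:korkmaz2g-1} in genus $g-1$. The necessary eigenspace-and-invariance bookkeeping, including the device recorded here as \Cref{lemma:generalizedinvariant}, is where the real work lies; your parenthetical that $g=3$ needs a separate input is also correct.

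One small correction in your final step: the assertion that a linear dependence among $u_{a_1},\dots,u_{b_g}$ ``would, via equivariance, produce a proper nonzero subrepresentation'' is not right as stated --- a relation among specific vectors does not by itself cut out an invariant subspace. The clean argument is the one in \Cref{lemma:normalize}: use the braid and commutation relations directly to prove that the vectors attached to a chain are linearly independent.
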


We now turn to representations of dimension $2g+1$. Let $UTS$ denote the unit tangent bundle of $S$. When $S$ is non-closed, $UTS \cong S \times S^1$ splits as a product, and in particular, $H_1(UTS; \C) \cong \C^{2g+1}$. Let $\tilde H = H_1(UTS;\C)$. Any diffeomorphism of $S$ induces a diffeomorphism of $UTS$; the {\em unit tangent representation}
\[
\tilde \Psi: \Mod(S) \to \GL(\tilde H)
\]
is the induced action on homology. Note that $H \cong H_1(S;\C)$ arises as a quotient of $\tilde H$ via pushforward along the projection $\pi: UTS \to S$; the induced action on $H$ is evidently $\Psi$. This representation was studied by Trapp \cite{trapp}. Kasahara classified representations of $\Mod(S)$ for $S$ a surface of genus $g$ with an arbitrary number of punctures and/or boundary components. In the setting we consider here (where $S$ has at most one boundary component or puncture), his result specializes as follows.

\begin{theorem}[Kasahara, cf. Theorem 1.1 of \cite{kasahara}]\label{thm:kas}
    Let $S$ be a surface of genus $g \ge 7$ which is either closed, has one boundary component, or one puncture. If $S$ is closed, then every nontrivial representation $\rho: \Mod(S) \to \GL(2g+1,\C)$ is conjugate to $H \oplus \C$. If $S$ is nonclosed, then any nontrivial $\rho$ is conjugate to $H \oplus \C$, or to $\tilde H$, or to $\tilde H^*$.
\end{theorem}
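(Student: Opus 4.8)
This is Kasahara's theorem \cite{kasahara}; I will sketch the line of argument, which builds on the techniques behind \Cref{thm:korkmaz2g-1,thm:korkmaz2g}. The plan is to fix a generating set of $\Mod(S)$ by Dehn twists $T_{c_1},\dots,T_{c_{2g+1}}$ about a chain of non-separating curves (Humphries-type generators, adjusted for the puncture or boundary), record that their images $A_i := \rho(T_{c_i})$ satisfy the braid relations $A_iA_{i+1}A_i = A_{i+1}A_iA_{i+1}$ for consecutive curves, commute for non-consecutive ones, and satisfy one instance each of the lantern and chain relations. Since $\Mod(S)$ is perfect for $g\ge 3$, every $A_i$ lies in $\SL(2g+1,\C)$; since all non-separating twists are conjugate in $\Mod(S)$, all the $A_i$ share a common conjugacy class, hence a common Jordan type; and since $\Mod(S)$ is generated by non-separating twists, $\rho$ is trivial as soon as this Jordan type is trivial. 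So I may assume $\rho$ is nontrivial and set $N := A_1 - I \ne 0$.

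The technical heart is the estimate $\rank N \le 2$. For this I would locate inside $S$ -- this is where the hypothesis $g\ge 7$ provides the necessary room -- a large collection of pairwise disjoint non-separating curves together with an embedded four-holed sphere whose seven lantern curves are all non-separating, and then feed the commuting, braid, and lantern relations through $\rho$: the commuting images $I + N_c$ of twists about disjoint curves cannot all coexist inside $\GL(2g+1,\C)$ unless the common nilpotent $N$ has rank at most $2$. Granting this, I would reconstruct $\rho$ by a bi-affine-type analysis: the submodules $W := \sum_c \im(\rho(T_c)-I)$ and $W' := \bigcap_c \ker(\rho(T_c)-I)$ (both over non-separating $c$) are $\Mod(S)$-invariant, $\Mod(S)$ acts trivially on $W'$ and on $V/W$, and the nontrivial ``core'' subquotient must, by \Cref{thm:korkmaz2g-1}, have dimension at least $2g$; since $n \le 2g+1$ it has dimension exactly $2g$ and so is conjugate to $H$ by \Cref{thm:korkmaz2g}. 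Thus $\rho$ is an extension of trivial modules by a single copy of $H$, and such extensions are governed by $H^1(\Mod(S);H) \cong H^1(\Mod(S);H^*)$, which is one-dimensional when $S$ has a puncture or boundary (spanned by the Earle/Johnson crossed homomorphism, by Morita's computation) and zero when $S$ is closed. If $\rank N = 1$ the extension is split and $\rho \cong H\oplus\C$; if $\rank N = 2$ it is non-split, and whether the trivial line sits as a submodule or as a quotient distinguishes $\tilde H$ from $\tilde H^*$. For closed $S$ the relevant $H^1$ vanishes, so only $H \oplus \C$ survives.

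I expect the rank estimate $\rank N \le 2$ to be the main obstacle: assembling the curve configurations and keeping track of which matrices commute, braid, or enter the lantern relation -- in a way that makes the dimension count in $\GL(2g+1,\C)$ actually close -- is delicate, and is presumably why $g \ge 7$ (rather than $g \ge 3$) is needed. A secondary point of care is the final extension-theoretic bookkeeping: one must exclude a spurious trivial summand accompanying a non-split extension, which is exactly where the equality $n = 2g+1$ (as opposed to $n > 2g+1$) is used, and one must match the two non-split one-parameter families correctly with $\tilde H$ and $\tilde H^*$.
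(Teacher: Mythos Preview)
The paper does not give its own proof of this theorem; it is quoted as Kasahara's result, and the Remark immediately following merely explains how the present formulation follows from Kasahara's own statement (nontrivial $(2g{+}1)$-dimensional representations up to dualizing are in bijection with $H^1(\Mod(S);H)/\C^\times$) together with Morita's computation of that cohomology group (\Cref{lemma:h1modH}). Your sketch therefore goes beyond what the paper does, attempting to outline Kasahara's internal argument.

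That said, your sketch contains a genuine error in its final step. You assert that $\rank N = 1$ forces the extension to split (giving $H \oplus \C$) while $\rank N = 2$ yields the non-split cases $\tilde H$, $\tilde H^*$. This is false: for \emph{all three} of $H \oplus \C$, $\tilde H$, and $\tilde H^*$, the image $\rho(T_c)$ of a nonseparating twist has $1$-eigenspace of codimension exactly one, i.e.\ $\rank N = 1$. This is precisely the content of \Cref{lemma:DCScodim1}, whose proof checks it explicitly for $\tilde H^*$ (and hence, by duality, for $\tilde H$). The split/non-split distinction is purely cohomological --- whether the associated class in $H^1(\Mod(S);H)$ vanishes --- and is invisible at the level of $\rank N$. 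So the correct target for the ``technical heart'' is $\rank N \le 1$ (in the paper's language, that $\rho$ is \emph{transvective}), not $\le 2$; and once that is in hand, the trichotomy must be argued via $H^1$ rather than by a rank dichotomy. A secondary gap: your submodules $W = \sum_c \im(\rho(T_c)-I)$ and $W' = \bigcap_c \ker(\rho(T_c)-I)$ need not be nested --- for $H \oplus \C$ they are complementary --- so they do not directly furnish the bi-affine filtration you want without further argument.
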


\begin{remark}
    We explain here how to deduce this statement from \cite[Theorem 1.1]{kasahara}. The statement there asserts that isomorphism classes of nontrivial representations of dimension $2g+1$ up to dualizing are in bijection with $H^1(\Mod(S);H)/\C^\times$. When $S$ is closed, Morita computed $H^1(\Mod(S);H) = 0$, and when $S$ has one boundary component or puncture, $H^1(\Mod(S);H)\cong \C$ (see \cite{moritajacobians}, recalled here as \Cref{lemma:h1modH}). Thus up to dualizing, there is a unique representation not of the form $H\oplus \C$; it is not hard to show that $\tilde H$ is such a representation. For further discussion of $\tilde H$, see \Cref{subsection:UT}.
\end{remark}

\begin{remark}
    Our proof depends on \Cref{thm:korkmaz2g-1,thm:korkmaz2g} of Korkmaz, as well as various technical lemmas established therein. It is logically independent of Kasahara's main theorem (\Cref{thm:kas}), and indeed improves the range of his result from $g \ge 7$ down to $g \ge 4$, but we again make use of some of the internal technology.
\end{remark}

\subsection{The Torelli group and the work of Johnson}\label{subsection:johnson}
Recall that the {\em Torelli group} $\cI(S) \le \Mod(S)$ is the kernel of the symplectic representation $\Psi$. The structure of $\cI(S)$ was greatly clarified in a series of papers of Dennis Johnson in the 1980's. We recall the relevant portions of his theory here. 

We first recall the {\em Johnson homomorphism}, as developed in \cite{johnsonhom}. For the sake of expediency we will not describe this in full detail. For our purposes it is sufficient to know that the Johnson homomorphism is a map
\[
\tau: \cI(S) \to A_S,
\]
where $A_S$ is a certain finitely generated torsion-free abelian group. Define the {\em Johnson kernel}
\[
\cK(S) = \ker(\tau)
\]
as the kernel of $\tau$. 

A deep theorem of Johnson shows that $\cK(S)$ is generated by just two types of elements. Let $c \subset S$ be a separating curve (i.e. one for which $S \setminus c$ is disconnected). In the case where $S$ has a puncture or boundary component, the {\em genus} of $S$ is the genus of the subsurface {\em not} containing this; if $S$ is closed we define the genus as the smaller of the genera of the subsurfaces bounded by $c$. A {\em separating twist} is the Dehn twist $T_c$ about a separating curve $c$; we define the genus of such $T_c$ as the genus of $c$.

\begin{theorem}[Johnson, \cite{johnsonII}]\label{theorem:johnsonII}
    For $g \ge 3$, $\cK(S)$ is generated by the set of separating twists of genus one and two. 
\end{theorem}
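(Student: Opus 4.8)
The plan is to establish the result in two stages: the substantive one, essentially due to Johnson, shows that $\cK(S)$ is generated by separating twists of \emph{arbitrary} genus, and the second is an elementary reduction, via the lantern relation, from arbitrary genus down to genus one and two.

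For the first stage I would argue as follows. It is classical (Birman, Powell) that $\cI(S)$ is generated by bounding-pair maps $T_aT_b^{-1}$. Let $N\le\cI(S)$ be the subgroup generated by all separating twists; since the set of separating twists is invariant under conjugation in $\Mod(S)$, the group $N$ is normal in $\Mod(S)$, and since $\tau$ vanishes on every separating twist it descends to a homomorphism $\bar\tau\colon\cI(S)/N\to A_S$. It then suffices to show that $\bar\tau$ is an isomorphism, for in that case $N=\ker\tau=\cK(S)$. Surjectivity of $\bar\tau$ is immediate from the surjectivity of $\tau$ (Johnson). Injectivity is the crux: the group $\cI(S)/N$ is generated by the images of the bounding-pair maps, and $\tau$ is explicitly computable on these --- on a genus-one bounding-pair map it returns a vector of the form $c\wedge x\wedge y$ --- so one must show that every $\Z$-linear relation holding among the $\tau$-values of these generators already holds modulo $N$ inside $\cI(S)$. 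This is arranged by producing enough relations among bounding-pair maps --- disjoint bounding-pair maps commute; suitably nested triples satisfy lantern-type relations of the shape $\beta_1\beta_2=\beta_3T_c$ with $T_c$ a separating twist, encoding the additivity $\tau(\beta_1)+\tau(\beta_2)=\tau(\beta_3)$ in $A_S$; and the conjugation action of $\Mod(S)/\cI(S)$ on bounding-pair maps is compatible, via $\bar\tau$, with its action on $A_S$, cutting down the set of relations to be checked --- and then verifying that these span the module of relations imposed by $\tau$. This verification is the technical heart of \cite{johnsonII}, and it is the step I expect to be the main obstacle; the reductions surrounding it, and the lantern argument below, are comparatively soft.

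Granting that $\cK(S)$ is generated by separating twists of every genus, I would conclude by induction on the genus $h$ of a separating curve $c$. The cases $h\le2$ are given outright, so assume $h\ge3$ and let $\Sigma_0\subseteq S$ be the genus-$h$ subsurface bounded by $c$ (the side not containing the puncture or boundary of $S$, if any). Since $(h-2)+1+1=h$, one can embed inside $\Sigma_0$ a four-holed sphere $P$ whose boundary circles are $c$ together with three curves $\gamma_1,\gamma_2,\gamma_3$ bounding, inside $\Sigma_0$, subsurfaces of genera $h-2$, $1$, and $1$; then in $S$ the curves $\gamma_1,\gamma_2,\gamma_3$ are separating of genera $h-2$, $1$, $1$, and the three essential interior curves of $P$ are separating of genera $(h-2)+1=h-1$, $(h-2)+1=h-1$, and $1+1=2$. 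The lantern relation supported in $P$ then writes $T_c$ as a product of $T_{\gamma_1}^{\pm1},T_{\gamma_2}^{\pm1},T_{\gamma_3}^{\pm1}$ and the twists about the three interior curves, hence as a product of separating twists of genera $1$, $2$, $h-2$, and $h-1$, each of which is either equal to $2$ or strictly less than $h$. By the inductive hypothesis each such factor lies in the subgroup generated by genus-one and genus-two separating twists, and therefore so does $T_c$. Note that this step imposes no further constraint on $g$ beyond the existence of a genus-$h$ separating curve, since the whole configuration lies inside $\Sigma_0$; and since the separating twists of a fixed genus form a single $\Mod(S)$-conjugacy class, the subgroup they generate is automatically normal in $\Mod(S)$, in keeping with the normality of $\cK(S)$.
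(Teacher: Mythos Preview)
The paper does not prove this theorem; it is quoted from Johnson \cite{johnsonII} and used as a black box in the proof of \Cref{prop:kgtriv}. So there is no argument in the paper for you to be compared against.

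That said, your two-stage plan is correct in outline. The first stage --- that $\cK(S)$ coincides with the subgroup $N$ generated by all separating twists --- is precisely the main theorem of \cite{johnsonII}, and your sketch (show that $\bar\tau\colon \cI(S)/N \to A_S$ is an isomorphism, with injectivity the hard direction) accurately captures the architecture of Johnson's argument. You are right to flag the injectivity verification as the real obstacle; Johnson's proof of it is substantially more delicate than the relation-matching you indicate, but you have not claimed otherwise.

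Your second-stage lantern reduction is correct and is the standard way to pass from ``all separating twists'' to ``genus one and two''. For a separating curve $c$ of genus $h\ge 3$, embedding a four-holed sphere in the genus-$h$ subsurface with $c$ as one boundary and the other three boundaries cutting off pieces of genus $h-2,1,1$ produces interior curves of genus $h-1,h-1,2$ in $S$; the lantern relation
\[
T_{\gamma_1}T_{\gamma_2}T_{\gamma_3}T_c = T_{x_{12}}T_{x_{13}}T_{x_{23}}
\]
then expresses $T_c$ as a word in separating twists of genera $1,1,h-2,h-1,h-1,2$, all strictly less than $h$, and induction finishes. The genus computations you give are right in both the closed and non-closed cases, and no hypothesis beyond $g\ge 3$ is needed (for $g\le 5$ closed, or $g=3$ non-closed, there are simply no separating curves of genus $\ge 3$ to reduce).
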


The Torelli group is itself generated by elements admitting a simple description. A {\em bounding pair} is a set of curves $a, b \subset S$ that are disjoint and such that $S \setminus \{a,b\}$ is disconnected. A {\em bounding pair map} is the product $T_a T_b^{-1}$ of Dehn twists; it is straightforward to show that these are elements of the Torelli group. 

\begin{theorem}[Johnson, \cite{johnsonbps}]\label{theorem:johnsonBPs}
    For $g \ge 3$, $\cI(S)$ is generated by bounding pair maps. 
\end{theorem}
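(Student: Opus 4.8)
The plan is to prove the equivalent assertion that the subgroup $N \le \cI(S)$ generated by all bounding pair maps equals $\cI(S)$. First note that $N$ is normal in $\Mod(S)$: for $\phi \in \Mod(S)$ and a bounding pair $\{a,b\}$, the pair $\{\phi(a),\phi(b)\}$ again bounds and $\phi(T_aT_b^{-1})\phi^{-1} = T_{\phi(a)}T_{\phi(b)}^{-1}$, so the collection of bounding pair maps is conjugation-invariant. To show $N = \cI(S)$ I would use the normal subgroup $\cK(S) \trianglelefteq \cI(S)$ and establish two things: (a) the image of $N$ in $\cI(S)/\cK(S)$ is everything, and (b) $\cK(S) \subseteq N$. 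Granting both: given $g \in \cI(S)$, choose $n \in N$ with the same image in $\cI(S)/\cK(S)$ by (a), so that $gn^{-1} \in \cK(S) \subseteq N$ by (b), whence $g \in N$.

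Step (a) follows from the construction of the Johnson homomorphism $\tau : \cI(S) \to A_S$ in \cite{johnsonhom}. By definition $\cK(S) = \ker(\tau)$, so $\cI(S)/\cK(S) \cong \im(\tau)$; and one of Johnson's basic computations identifies $\im(\tau)$ with $\wedge^3 H_1(S;\Z)$ -- or its quotient by $H_1(S;\Z)$ when $S$ is closed -- in such a way that it is spanned by the values $\tau(T_aT_b^{-1})$ taken on bounding pair maps (the homomorphism $\tau$ is in fact defined through its effect on such maps). Hence $\tau(N) = \im(\tau)$. For step (b), by \Cref{theorem:johnsonII} the group $\cK(S)$ is generated by separating twists of genus one and two, so it suffices to show that every separating twist $T_\gamma$ with $\gamma$ of genus one or two is a product of bounding pair maps.

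For the last point I would use the lantern relation. Given such a $\gamma$, the goal is to find an embedded four-holed sphere $P \subset S$ lying on an appropriate side of $\gamma$, having $\gamma$ as one of its four boundary curves, and whose other three boundary curves $a,b,c$ together with the three interior curves $x,y,z$ appearing in the lantern relation $T_aT_bT_cT_\gamma = T_xT_yT_z$ can be chosen to be nonseparating in $S$ with $a$ homologous to $x$, $b$ to $y$, and $c$ to $z$. Granting such $P$: the curves $a,b,c,\gamma$ are pairwise disjoint and each is disjoint from $x,y,z$, so among the twists involved only $T_x,T_y,T_z$ fail to commute, and the relation rearranges (using these commutations) to
\[
T_\gamma = T_c^{-1}T_b^{-1}T_a^{-1}T_xT_yT_z = (T_a^{-1}T_x)(T_b^{-1}T_y)(T_c^{-1}T_z).
\]
Each parenthesized factor equals a bounding pair map, since two disjoint, nonisotopic, homologous simple closed curves cobound an embedded subsurface; hence $T_\gamma \in N$. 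The main obstacle is precisely the construction of $P$ with the homological matching $a \sim x$, $b \sim y$, $c \sim z$, performed uniformly for all genus-one and genus-two separating curves: keeping track of the homology classes and arranging the requisite curves to be simultaneously disjoint in the pattern of a lantern is the real content, and this is exactly where the hypothesis $g \ge 3$ is indispensable (for $g = 2$ the complement of a genus-one separating curve admits no such $P$, and correspondingly $\cI(S_2)$ -- which is infinitely generated -- is not generated by bounding pair maps).

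Finally, should one wish to avoid invoking the fine structure of $\im(\tau)$ in step (a), one can instead identify $\cI(S)$ with the normal closure in $\Mod(S)$ of the lifts of a finite set of defining relators of $\Sp(2g,\Z)$ -- using finite presentations of $\Mod(S)$ and of $\Sp(2g,\Z)$ with the latter written on the images of the former's Dehn twist generators -- and then check by hand that each such lifted relator is a product of bounding pair maps; the lantern computation above remains the essential mechanism in either route.
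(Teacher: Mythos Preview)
The paper does not give a proof of \Cref{theorem:johnsonBPs}; it is quoted as a result of Johnson \cite{johnsonbps} and used as background input. So there is no argument in the paper to compare your proposal against.

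On your proposal itself: the logical skeleton is sound if one is willing to treat \Cref{theorem:johnsonII} as a black box, but this is backwards. Johnson's theorem that $\cK(S)$ is generated by separating twists (1985) is a substantially deeper result than the bounding-pair generation of $\cI(S)$ (1979), and his proof of the former relies on his earlier structure theory of the Torelli group. Invoking \Cref{theorem:johnsonII} here is therefore at best using a hard theorem to prove an easier one, and at worst genuinely circular. Your step (a) has a similar flavor: the Johnson homomorphism \cite{johnsonhom} also postdates \cite{johnsonbps}, and while the computation that $\tau$(bounding pair maps) span $\im(\tau)$ is correct, it is unnecessary machinery for this statement.

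The natural route---and essentially Johnson's actual one---is to start from the Birman--Powell theorem that $\cI(S)$ is generated by separating twists together with bounding pair maps, and then show directly that every separating twist is a product of bounding pair maps when $g \ge 3$. Your lantern argument in step (b) is precisely the mechanism for this and is on the right track; the construction of the four-holed sphere $P$ with the required homological matchings $a\sim x$, $b\sim y$, $c\sim z$ is indeed the crux, and it can be carried out, but you have only asserted its existence rather than exhibited it. With Birman--Powell in hand, step (a), the Johnson homomorphism, and \Cref{theorem:johnsonII} are not needed at all.
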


It is worth remarking that both $\cI(S)$ and $\cK(S)$ are in fact generated by a {\em finite} collection of such elements. This result for $\cI(S)$ is due to Johnson \cite{johnsonI}, while the result for $\cK(S)$ is a much more recent result of Ershov-He \cite{ershov-he}. Note however we will not use these finite generation results in our work.

\section{Bi-affine representations of the mapping class group}\label{S:biaffine}

Here we first define the notions of affine, co-affine, and bi-affine representations in general, then quickly specialize to the setting of the mapping class group, collecting some simple preliminary results. The main result of the section is \Cref{cor:biafftriv}, which gives a classification of bi-affine representations of $\Mod(S)$ with core $H$ (as always, $H = H_1(S;\C)$). 

\subsection{Bi-affine representations}

\begin{definition}[Affine, co-affine, bi-affine]\label{def:biaff}
    A group representation $\rho: G \to \GL(V)$ is {\em affine} if there is an invariant subspace $W \le V$ for which the quotient representation $\bar \rho: G \to \GL(V/W)$ is trivial. $\rho$ is {\em co-affine} if there is an invariant subspace $W$ for which $\rho|_W$ is trivial. $\rho$ is {\em bi-affine} if there are invariant subspaces $V_1 \le V_2 \le V$ for which $V_1$ and $V/V_2$ are both trivial. In this setting, the quotient $V_2/V_1$ is called the {\em core} of the representation.
\end{definition}

\begin{remark}\label{remark:duals}
    The dual of an affine representation is co-affine, and vice versa.
\end{remark}

\subsection{The unit tangent representation as an affine representation}\label{subsection:UT}
Here we explain how to understand $\tilde H$ and $\tilde H^*$ as a co-affine and affine representation, respectively. 

We first recall the construction of $\tilde H$ and its dual $\tilde H^*$. As above, $UTS$ denotes the unit tangent bundle of $S$, and set $\tilde H = H_1(UTS; \C)$. The projection $\pi: UTS \to S$ induces a surjection $\pi_*: \tilde H \to H$. The kernel is the trivial one-dimensional subrepresentation spanned by the class of the $S^1$ fiber. Thus there is a short exact sequence
\[
0 \to \C \to \tilde H \to H \to 0,
\]
realizing $\tilde H$ as a co-affine representation. Dually, $\tilde H^* = H^1(UTS;\C)$ is seen to be affine, with $H$ embedded as a submodule of codimension one via pullback.


\subsection{Extensions}
The next lemma shows that any extension of a bi-affine representation of $\Mod(S)$ is again bi-affine, so long as the dimension does not grow too much. 

\begin{lemma}\label{lemma:againDCS}
    Let $\rho: \Mod(S) \to \GL(V)$ be a bi-affine representation with respect to the filtration $V_1 \le V_2 \le V$. Suppose that $V$ embeds as a submodule of $\tilde V$, and that $\dim(\tilde V/V_2) = \dim(\tilde V/V) + \dim(V/V_2) \le 2g - 1$. Then $\tilde V$ likewise is bi-affine, with respect to the filtration $V_1 \le V_2 \le \tilde V$. 

    Dually, suppose $U \le \tilde V$ is a filtration of $\C[G]$-modules, with $V:= \tilde V/U$ bi-affine, filtered as $V_1 \le V_2 \le V$. Let $\tilde V_1 \le \tilde V_2 \le \tilde V$ be the preimage of this filtration in $\tilde V$. Suppose that $\dim(\tilde V_1) = \dim(U)+ \dim(V_1) \le 2g-1$. Then $\tilde V$ likewise is bi-affine, with respect to the filtration $\tilde V_1 \le \tilde V_2 \le \tilde V.$
\end{lemma}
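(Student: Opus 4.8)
The plan is to deduce both statements directly from Korkmaz's vanishing theorem (\Cref{thm:korkmaz2g-1}), which asserts that every representation of $\Mod(S)$ of dimension at most $2g-1$ is trivial. The key observation is that the dimension hypotheses are exactly what is needed to force the new ``outer'' graded piece of the filtration to be trivial, while the inner piece and the core are literally unchanged; so there is nothing to reconstruct, only to verify triviality in one new place.

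For the first statement, I would argue as follows. Since $V_2 \le V \le \tilde V$, the subspace $V_2$ is a $\Mod(S)$-submodule of $\tilde V$, and $V_1 \le V_2$ remains a submodule with $V_1$ trivial by hypothesis. It therefore suffices to check that the quotient $\tilde V/V_2$ is trivial. But $\dim(\tilde V/V_2) = \dim(\tilde V/V) + \dim(V/V_2) \le 2g-1$ by hypothesis, so $\tilde V/V_2$ is trivial by \Cref{thm:korkmaz2g-1}. Hence $V_1 \le V_2 \le \tilde V$ exhibits $\tilde V$ as bi-affine, with core $V_2/V_1 = H$ unchanged.

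For the dual statement the same idea applies after passing to the preimage filtration. The quotient $\tilde V/\tilde V_2$ is isomorphic as a $\Mod(S)$-module to $V/V_2$, since $\tilde V_2$ is by definition the preimage of $V_2$ under the projection $\tilde V \onto V = \tilde V/U$; and $V/V_2$ is trivial because $V$ is bi-affine. On the other side, $\tilde V_1$ is a submodule of $\tilde V$ of dimension $\dim(U) + \dim(V_1) \le 2g-1$, so it too is trivial by \Cref{thm:korkmaz2g-1}. Therefore $\tilde V_1 \le \tilde V_2 \le \tilde V$ is a bi-affine filtration, again with core $\tilde V_2/\tilde V_1 \cong V_2/V_1 = H$.

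In both cases the only content beyond bookkeeping is the triviality of the new graded piece, which is immediate from the dimension bound and Korkmaz's theorem, so I do not anticipate a genuine obstacle. The remaining checks are routine: the elementary identities $\dim(\tilde V/V_2) = \dim(\tilde V/V) + \dim(V/V_2)$ and $\dim(\tilde V_1) = \dim(U) + \dim(V_1)$ are just additivity of dimension in short exact sequences, and one confirms that the cores agree because the middle graded piece $V_2/V_1$ (resp. $\tilde V_2/\tilde V_1$) is untouched in each construction.
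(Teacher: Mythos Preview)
Your argument is correct and matches the paper's proof for the first statement essentially verbatim: the only thing to check is that $\tilde V/V_2$ is trivial, and this follows immediately from \Cref{thm:korkmaz2g-1} and the dimension bound. For the dual statement, the paper simply invokes \Cref{remark:duals} (dualizing a bi-affine representation), whereas you argue directly by noting $\tilde V/\tilde V_2 \cong V/V_2$ is already trivial and then applying \Cref{thm:korkmaz2g-1} to $\tilde V_1$; this is an equally valid and arguably more transparent route to the same conclusion.
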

\begin{proof}
    By hypothesis, $\dim(\tilde V/V_2) \le 2g-1$. By \Cref{thm:korkmaz2g-1}, $\tilde V/V_2$ must be trivial; as $V_1$ is trivial by hypothesis, this realizes $\tilde V$ as a bi-affine representation. The dual statement follows from this by \Cref{remark:duals}.
\end{proof}

\subsection{Classifying bi-affine representations of the mapping class group}\label{subsection:gpcohomlemma}

The objective of this subsection is \Cref{cor:biafftriv}, which shows that bi-affine representations of $\Mod(S)$ (with core $H$) are extremely simple. This is a homological calculation, built around a determination of certain $\Ext$ groups (recall that for $R$ any ring and $A,B$ any $R$-modules, $\Ext_R^1(A,B)$ classifies isomorphism classes of extensions $0 \to B \to E \to A \to 0$). We first recall some basic properties of $\Ext$ and its connection to group cohomology.

\begin{proposition}[cf. \cite{brown}, III.2.2]\label{prop:exttoH}
    Let $G$ be a group and let $M,N$ be $\C[G]$-modules. Then 
    \[
    \Ext^*_{\C[G]}(M,N) \cong H^*(G; \Hom_\C(M,N)).
    \]
\end{proposition}

In general, $\Ext^*_R(A,B)$ carries commuting actions of $\Aut_R(A)$ and $\Aut_R(B)$. When $A$ and/or $B$ is a trivial $R$-module, this structure is extremely simple.

\begin{lemma}\label{lemma:extGL}
    Let $M$ be a $\C[G]$-module, and let $V \cong \C^n$ be a {\em trivial} $\C[G]$-module.
    Then as a $\GL(V)$-module,
    \[
    \Ext^*_{\C[G]}(V,M) \cong V^* \otimes \Ext^*_{\C[G]}(\C,M) \qquad \mbox{and} \qquad \Ext^*_{\C[G]}(M,V) \cong \Ext^*_{\C[G]}(M, \C) \otimes V.
    \]
\end{lemma}
\begin{proof}
     $\Ext$ commutes with the formation of finite direct sums in either argument \cite[Proposition 3.3.4]{weibel}.
\end{proof}

For simplicity of notation, in the remainder of the section, the ring $R$ in $\Ext^*_R$ will always be $\C[\Mod(S)]$, and we will suppress it from the notation. Our determination of $\Ext$ groups in this setting is based on a fundamental result of Morita.

\begin{proposition}[Morita, cf. Propositions 4.1 and 6.4 of \cite{moritajacobians}]\label{lemma:h1modH}
    For $S = \Sigma_{g,1}$ or $\Sigma_{g,*}$,
    \[
    H^1(\Mod(S);H) \cong \C.
    \]
    For $S = \Sigma_g$,
    \[
    H^1(\Mod(S);H) = 0.
    \]
\end{proposition}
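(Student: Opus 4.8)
The statement to prove is \Cref{lemma:h1modH}, that $H^1(\Mod(\Sigma_{g,1}); H) \cong \C$ and $H^1(\Mod(\Sigma_{g,*}); H) \cong \C$, while $H^1(\Mod(\Sigma_g); H) = 0$. My plan is to reduce the computation to the Torelli group via the five-term exact sequence of the extension
\[
1 \to \cI(S) \to \Mod(S) \to \Sp(2g,\Z) \to 1,
\]
and then use Johnson's description of $\cI(S)^{\mathrm{ab}}$ together with classical vanishing results for $\Sp(2g,\Z)$ with coefficients in small representations. The five-term sequence reads
\[
0 \to H^1(\Sp(2g,\Z); H) \to H^1(\Mod(S); H) \to H^1(\cI(S); H)^{\Sp(2g,\Z)} \xrightarrow{d_2} H^2(\Sp(2g,\Z); H).
\]
Since $H$ is the standard $2g$-dimensional symplectic representation (over $\C$), we have $H^1(\Sp(2g,\Z); H) = 0$ and $H^2(\Sp(2g,\Z); H) = 0$ for $g$ large (this is in the stable range of Borel-type vanishing / the results cited by Morita in \cite{moritajacobians}; one must be slightly careful about the exact range but $g \ge 3$ suffices here, or one cites Morita directly). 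Granting this, the sequence gives an isomorphism
\[
H^1(\Mod(S); H) \cong H^1(\cI(S); H)^{\Sp(2g,\Z)} = \Hom_{\Sp}(\cI(S)^{\mathrm{ab}} \otimes \C, H).
\]

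The second step is to plug in Johnson's computation of the abelianization of the Torelli group. For $S = \Sigma_{g,1}$, Johnson showed $\cI(S)^{\mathrm{ab}} \otimes \Q \cong \wedge^3 H_\Q$ (the whole thing, via the Johnson homomorphism, the torsion being $2$-torsion which dies after $\otimes \C$). For $S = \Sigma_g$ closed one instead gets $\wedge^3 H_\Q / H_\Q$ (the ``primitive part'' $\wedge^3 H / H$), and for $\Sigma_{g,*}$ one gets $\wedge^3 H_\Q$ as well (same as the bounded case up to the relevant low-degree cohomology). Thus the computation becomes a branching/multiplicity question in the representation theory of $\Sp(2g,\Z)$ (equivalently $\Sp(2g,\Q)$, by Borel density, or $\Sp(2g)$ as an algebraic group): one needs $\dim \Hom_{\Sp}(\wedge^3 H, H)$ and $\dim \Hom_{\Sp}(\wedge^3 H / H, H)$. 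The decomposition of $\wedge^3 H$ as an $\Sp(2g)$-module is $\wedge^3 H = V_{\langle 1^3\rangle} \oplus V_{\langle 1 \rangle}$, where $V_{\langle 1 \rangle} = H$ appears with multiplicity exactly one (the contraction map $\wedge^3 H \to H$ using the symplectic form). Hence $\Hom_{\Sp}(\wedge^3 H, H) \cong \C$, giving $H^1(\Mod(\Sigma_{g,1});H) \cong \C$; and $\Hom_{\Sp}(\wedge^3 H/H, H) = 0$ since the copy of $H$ has been quotiented out, giving $H^1(\Mod(\Sigma_g);H) = 0$.

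The main obstacle is book-keeping around the low-dimensional cohomology of $\Sp(2g,\Z)$ and the precise form of $\cI(S)^{\mathrm{ab}}$ in each of the three cases (closed, one puncture, one boundary), making sure the $d_2$ differential genuinely vanishes (it does, because its target vanishes) and that the relation between the punctured and bounded cases is handled correctly via the central extension $1 \to \Z \to \Mod(\Sigma_{g,1}) \to \Mod(\Sigma_{g,*}) \to 1$ (or via the Birman exact sequence relating $\Sigma_{g,*}$ and $\Sigma_g$) — here one should track whether the $H$-coefficients force the relevant correction terms to vanish. Since the paper is content to cite Morita (\cite[Propositions 4.1 and 6.4]{moritajacobians}), the cleanest route is in fact to quote those propositions directly rather than reconstruct the argument; but the above sketch explains why the answer is what it is, and could be used if a self-contained treatment were wanted. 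I would present it as: invoke Morita for the statement, and include the five-term-sequence-plus-branching explanation as a remark for the reader who wants to see the mechanism.
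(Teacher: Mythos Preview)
The paper supplies no proof of this proposition; it is stated with attribution to Morita and a pointer to \cite[Propositions 4.1 and 6.4]{moritajacobians}, exactly as you anticipate in your final paragraph. Your five-term-sequence sketch is a sound reconstruction and is in fact close to Morita's own method (he too passes through the extension by the Torelli group and the $\Sp$-module structure of its abelianization). Two small points of bookkeeping: for the punctured case, your claim $\cI(\Sigma_{g,*})^{\ab}\otimes\Q\cong\wedge^3 H$ follows from the $\Sigma_{g,1}$ case because $\Mod_{g,*}$ is the quotient of $\Mod_{g,1}$ by the central boundary twist, which lies in $\cK$ and hence already vanishes in the rational abelianization; and the vanishing $H^2(\Sp(2g,\Z);H)=0$ that you invoke to kill $d_2$ is correct but sits near the edge of Borel's stable range for small $g$, so in a self-contained write-up you would want a sharp citation there (or bypass the issue by exhibiting an explicit nontrivial cocycle directly, as the paper effectively does via \Cref{lemma:utnontriv}).
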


\begin{corollary}\label{cor:extcomps}
    Let $V,W$ be trivial $\Mod(S)$-modules. Then
    \[
    \Ext^1(V,W) = 0.
    \]
    If $S = \Sigma_{g,1}$ or $\Sigma_{g,*}$, then
    \[
    \Ext^1(V,H) \cong V^* \qquad \mbox{and} \qquad \Ext^1(H,W) \cong W,
    \]
    while if $S = \Sigma_g$, then
    \[
    \Ext^1(V,H) = \Ext^1(H,W) = 0.
    \]
\end{corollary}
\begin{proof}
    Using \Cref{prop:exttoH}, these translate into assertions about $H^1(\Mod(S);\Hom_\C(A,B))$ for various $A,B$. The first assertion follows from the fact that $H^1(\Mod(S);V) = 0$ for all $g \ge 0$ and all trivial $\Mod(S)$-modules $V$. The remaining assertions follow by combining \Cref{lemma:extGL} and \Cref{lemma:h1modH}.
\end{proof}

\begin{proposition}\label{prop:whencoboundary}
   Let $V_1$ and $V_3$ be trivial $\Mod(\Sigma_{g,1})$-modules, and let $[V_2] \in \Ext^1(H,V_1)$ be given. Suppose $[V_2] \ne 0$. Then $\Ext^1(V_3,V_2) = 0$.
\end{proposition}

\begin{proof}
   The short exact sequence $0 \to V_1 \to V_2 \to H \to 0$ induces the long exact sequence
\[
\Ext^1(V_3, V_1) \to \Ext^1(V_3,V_2) \to \Ext^1(V_3,H) \xrightarrow{\delta^1} \Ext^2(V_3,V_1).
\]
By \Cref{cor:extcomps} and \Cref{lemma:extGL}, this simplifies to
\[
0 \to \Ext^1(V_3,V_2) \to V_3^* \otimes \Ext^1(\C,H)  \xrightarrow{\delta^1} \Hom_\C(V_3, V_1) \otimes \Ext^2(\C,\C),
\]
and so $\Ext^1(V_3, V_2) = 0$ if and only if $\delta^1$ is injective. Moreover, it suffices to consider the case $V_1 = V_3 = \C$, and to take $V_2 = \tilde H$.

To proceed, we move to the setting of group cohomology, recasting $\delta^1: \Ext^1(\C,H) \to \Ext^2(\C,\C)$ as the connecting map $\delta^1: H^1(\Mod(\Sigma_{g,1}); H) \to H^2(\Mod(\Sigma_{g,1});\C)$ in the cohomology long exact sequence associated to the extension
\begin{equation}\label{eq:htilde}
    0 \to \C \to \tilde H \to H \to 0.
\end{equation}
 From here, we follow a calculation of Kawazumi-Souli\'e.\footnote{Their argument employs coefficients in $\Z$, not $\C$, but this is immaterial.} In \cite[Proposition 3.7]{KS}, they show that for all $g \ge 0$, the connecting map
 \[
 \delta^1: H^1(\Mod(\Sigma_{g,1}); H) \to H^2(\Mod(\Sigma_{g,1}); \C)
 \]
 is given by the (presently mysterious) formula 
 \[
 \delta^1(v) = \mu_*(m_{1,1}\cup v).
 \]
 The terms $m_{1,1}$ and $\mu_*$ require explanation. The class $m_{1,1} \in H^1(\Mod(\Sigma_{g,1}); H)$ is originally constructed in \cite{KM};\footnote{We follow Kawazumi-Souli\'e in referencing the unpublished expanded version of the original research announcement \cite{KMpub}.} as shown in \cite[Eq. (2.5)]{KS}, it coincides with the extension class of $\tilde H \in \Ext^1(H,\C) \cong H^1(\Mod(\Sigma_{g,1});H)$ for any $g \ge 2$. In particular, by \Cref{lemma:h1modH}, $m_{1,1}$ spans $H^1(\Mod(\Sigma_{g,1});H)$. The map $\mu_*: H \otimes H \to \C$ is the contraction map induced by the intersection pairing on $H$. Injectivity of $\delta^1$ now follows from the {\em contraction formula} established in \cite[Theorem 6.2]{KM}:
 \[
 \mu_*(m_{1,1} \cup m_{1,1}) = -e_1 \ne 0,
 \]
where $e_1 \in H^2(\Mod(\Sigma_{g,1});\C)$ is the first MMM class.
\end{proof}

\begin{corollary}\label{cor:biafftriv}
    Let $S$ be a surface of genus $g \ge 3$ which is either closed, has one boundary component, or one puncture. Let $V$ be a bi-affine representation of $\Mod(S)$ of arbitrary dimension with core $V_2/V_1 \cong H$. Then $V \cong W \oplus \C^n$, where $W$ is the symplectic representation $H$, the unit tangent representation $\tilde H$, or the dual $\tilde H^*$. If $S$ is closed, the latter two possibilities cannot arise.
\end{corollary}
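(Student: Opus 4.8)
The plan is to reduce \Cref{cor:biafftriv} to the cohomological dichotomy of \Cref{prop:whencoboundary}, using the symmetric description of double cocycle suspensions from \Cref{lemma:DCSsymm}. Write the bi-affine representation $V$ with core $H$ (by the standing assumption) as a double cocycle suspension with filtration $V_1 \le V_2 \le V$, so that $V_1 \cong \C^a$, $V/V_2 \cong \C^b$, and $V_2/V_1 \cong H$. By \Cref{lemma:DCSsymm}, $V$ is determined by cocycles $\phi_1 \in Z^1(\Mod(S), \Hom(H, \C^a))$ and $\phi_2 \in Z^1(\Mod(S), \Hom(\C^b, H))$ together with the auxiliary function $\alpha$; the point of \Cref{prop:whencoboundary} is that the obstruction to extending $\phi_2$ over $\C^a \oplus_{\phi_1} H$ vanishes precisely when $[\phi_1] = 0$, so if $V$ genuinely exists as a double cocycle suspension, then $[\phi_1] = 0$ or $[\phi_2] = 0$.

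First I would treat the case $[\phi_1] = 0$. By \Cref{lemma:projclass} we may assume $\phi_1 = 0$, so $V$ is co-affine: it sits in $0 \to \C^a \to V \to V/\C^a \to 0$ where $V/\C^a = H \oplus_{\phi_2} \C^b$ is an affine representation with invariant submodule $H$. Now invoke \Cref{lemma:projclass} and \Cref{lemma:h1modH}: since $H^1(\Mod(S); H) \cong \C$ (for $S$ nonclosed) or $0$ (for $S$ closed), the cohomology class $[\phi_2] \in H^1(\Mod(S); \Hom(\C^b,H)) \cong H^1(\Mod(S);H)^{\oplus b}$ is, up to the $\GL(b,\C)$-action of \Cref{lemma:Btwist} and rescaling, either zero (giving $V/\C^a \cong H \oplus \C^b$, hence $V$ trivial-plus-$H$) or has exactly one nonzero component, which by \Cref{lemma:utnontriv} is the class defining $\tilde H$. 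In that subcase $V/\C^a \cong \tilde H \oplus \C^{b-1}$, and since $\C^a$ is a trivial submodule this forces $V \cong \tilde H \oplus \C^{a+b-1}$. The case $[\phi_2] = 0$ is dual: $V$ is affine, its quotient by a trivial submodule is $\C^a \oplus_{\phi_1} H$, which by the same count is either $\C^a \oplus H$ or (one copy of) $\tilde H^*$ plus a trivial summand, and again the trivial quotient splits off. When $S$ is closed, $H^1(\Mod(S);H) = 0$ kills the $\tilde H$, $\tilde H^*$ possibilities outright, leaving only $H$.

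The one genuinely delicate point — and the main obstacle — is arguing that a trivial sub- or quotient module actually splits off as a direct summand, rather than merely sitting in a (possibly non-split) extension. Concretely, once we know $V/\C^a \cong \tilde H \oplus \C^{b-1}$ with $\C^a$ a trivial submodule, we must produce a $\Mod(S)$-equivariant splitting of $0 \to \C^a \to V \to \tilde H \oplus \C^{b-1} \to 0$; the obstruction lives in $H^1(\Mod(S); \Hom(\tilde H \oplus \C^{b-1}, \C^a))$. This is where \Cref{prop:whencoboundary} is used a second time, now applied with the roles reversed, together with $H^1(\Mod(S);\C) = 0$ (valid for $g \ge 3$) and the fact that $\Hom(\tilde H, \C^a)$ is itself, up to the trivial summand coming from the $S^1$-fiber class, of the form $\Hom(H,\C^a)$ twisted by a cocycle — so \Cref{prop:whencoboundary} forces the relevant $H^1$ to vanish once $[\phi_1] = 0$. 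I would organize this as a short self-contained vanishing computation and then conclude that every such extension splits, completing the identification of $V$ with $H \oplus \C^n$, $\tilde H \oplus \C^n$, or $\tilde H^* \oplus \C^n$.
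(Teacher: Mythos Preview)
Your overall strategy matches the paper's: use \Cref{prop:whencoboundary} to force one of $[\phi_1],[\phi_2]$ to vanish, then reduce the remaining (co-)affine piece via the $\GL$-action and \Cref{lemma:h1modH}. But your ordering of the steps creates a gap. You first identify the quotient $V/\C^a$ and then try to split off $\C^a$ by showing the relevant $H^1$ vanishes. This works when $V/\C^a\cong \tilde H^*\oplus\C^{b-1}$ (note: $H\oplus_{\phi_2}\C$ with $[\phi_2]\ne 0$ is $\tilde H^*$, not $\tilde H$, since $\tilde H^*$ is the affine one), because $\Hom(\tilde H^*,\C^a)\cong \tilde H^{\oplus a}$ and \Cref{prop:whencoboundary} gives $H^1(\Mod(S);\tilde H)=0$. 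But in the sub-case $[\phi_2]=0$, where $V/\C^a\cong H\oplus\C^b$, your vanishing argument fails: $H^1(\Mod(S);\Hom(H\oplus\C^b,\C^a))\cong H^1(\Mod(S);H^*)^a\cong\C^a\ne 0$. Your ``hence $V$ trivial-plus-$H$'' is therefore unjustified as written.

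The paper avoids this by reversing the order: it splits first, then analyzes. Concretely, from $[\phi_1]=0$ it deduces $[\phi_1^+]=0$ directly via the change-of-coefficients exact sequence
\[
H^1(\Mod(S);\Hom(\C^b,\C^a))\to H^1(\Mod(S);\Hom(H\oplus_{\phi_2}\C^b,\C^a))\to H^1(\Mod(S);\Hom(H,\C^a)),
\]
where the left term vanishes by perfectness of $\Mod(S)$ and $[\phi_1^+]$ maps to $[\phi_1]=0$ on the right. This gives $V\cong \C^a\oplus(H\oplus_{\phi_2}\C^b)$ regardless of $[\phi_2]$, after which the $\GL(b,\C)$-reduction is clean. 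Your argument is easily repaired by inserting exactly this step; the point is that you need to use the hypothesis $[\phi_1]=0$ to pin down the \emph{specific} extension class, not just compute the ambient Ext-group.
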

\begin{proof}
Note first that any representation of $\Mod(\Sigma_{g,*})$ is {\em a fortiori} a representation of $\Mod(\Sigma_{g,1})$. Therefore it suffices to assume that $S$ is either closed or has one boundary component.

Let $V_1 \le V_2 \le V$ be the bi-affine filtration on $V$, determining associated classes $[V_2] \in \Ext^1(H,V_1)$ and $[V] \in \Ext^1(V/V_2,V_2)$. First suppose $[V_2] =0$, so that $V_2 \cong H \oplus V_1$. Then, by \Cref{lemma:extGL} and \Cref{cor:extcomps}, in the case $S = \Sigma_{g,1}$,
\[
\Ext^1(V/V_2,V_2) \cong \Ext^1(V/V_2, H) \oplus \Ext^1(V/V_2,V_1) \cong (V/V_2)^*,
\]
while if $S = \Sigma_g$, then by \Cref{cor:extcomps}, $\Ext^1(V/V_2,H) = 0$, so that $\Ext^1(V/V_2,V_2) = 0$.
As $\GL((V/V_2)^*)$ acts transitively on the nonzero elements of $(V/V_2)^*$, it follows that in the case of $S = \Sigma_{g,1}$ there is a {\em unique} isomorphism class of $\Mod(S)$-module forming a nontrivial extension of $V/V_2$ by $V_2 \cong H \oplus V_1$. 
This is represented explicitly by $\tilde H^* \oplus V_1 \oplus U$, where $U \le V/V_2$ is a subspace of codimension one, and is of the claimed form. If instead also $[V] = 0$ in $\Ext^1(V/V_1,V_1)$, then $V \cong H \oplus V_1 \oplus V/V_2$. This completes the proof in the case where $[V_2] = 0$.

Now suppose $[V_2] \ne 0$. By \Cref{prop:whencoboundary}, $\Ext^1(V/V_2,V_2) = 0$. The argument now proceeds as before: $V \cong V/V_2 \oplus V_2$, and as $\Ext^1(H,V_1) \cong V_1$ by \Cref{cor:extcomps}, we again find (by exploiting the action of $\GL(V_1)$ to construct an explicit representative) that $V_2 \cong \tilde H \oplus U$, where again $U \le V_1$ is codimension one. 
\end{proof}

In later arguments, we will make use of the following structural corollary of this result.
\begin{corollary}\label{lemma:DCScodim1}
Let $\rho: \Mod(S) \to \GL(V)$ be a bi-affine representation with core $H$. Let $c$ be a nonseparating simple closed curve, and $T_c$ the associated Dehn twist. Then $\rho(T_c)$ is unipotent and the $1$-eigenspace $E_1^c \le V$ is of codimension one.
\end{corollary}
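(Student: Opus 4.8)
The plan is to reduce, via \Cref{cor:biafftriv}, to the three model representations, and then to verify the assertion for each of them directly.

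Since $V=(\C^a\oplus_{\phi_1}H)\oplus_{\phi_2^+}\C^b$ is a bi-affine representation of $\Mod(S)$ with core $H$, it is nontrivial (the core $H$ occurs as a subquotient), so \Cref{cor:biafftriv} provides an isomorphism of $\Mod(S)$-modules $V\cong W\oplus\C^n$ with $W\in\{H,\tilde H,\tilde H^*\}$. Under such an isomorphism $\rho(T_c)$ corresponds to $W(T_c)\oplus\id_{\C^n}$, whose fixed subspace is $\Fix(W(T_c))\oplus\C^n$ and which is unipotent precisely when $W(T_c)$ is. So it suffices to show that, for each of the three $W$, the operator $W(T_c)$ is unipotent with $\codim_W\Fix(W(T_c))=1$. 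In every case $W$ carries an invariant two-step filtration whose graded pieces consist of trivial modules and a single copy of $H$ (for $W=H$ this is just $H$; for $W=\tilde H$ it is $0\le\C\le\tilde H$ with $\tilde H/\C\cong H$; for $W=\tilde H^*$ it is $0\le H\le\tilde H^*$ with $\tilde H^*/H\cong\C$), and $T_c$ acts on the copy of $H$ by the symplectic transvection $v\mapsto v+\pair{v,[c]}[c]$. As $c$ is nonseparating, $[c]\neq 0$, so this transvection is nontrivial but satisfies $(\Psi(T_c)-\id)^2=0$; hence $W(T_c)$ is unipotent and $W(T_c)\neq\id$, giving $\codim_W\Fix(W(T_c))\ge 1$. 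The remaining point is the reverse inequality.

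For $W=H$ it is immediate: $\Fix(\Psi(T_c))=[c]^{\perp}$, of codimension one since the intersection form is nondegenerate and $[c]\neq 0$. For $W=\tilde H$ I would argue geometrically, using $\tilde H=H_1(UTS;\C)$ and $UTS\simeq S\times S^1$. Choose an annular neighborhood $A$ of $c$ supporting $T_c$; then the induced diffeomorphism of $UTS$ is the identity on $UTS|_{S\setminus A}$, so $\tilde\Psi(T_c)$ fixes pointwise the image $U$ of $H_1(UTS|_{S\setminus A};\C)\to H_1(UTS;\C)=\tilde H$. As $UTS|_{S\setminus A}$ is a trivial circle bundle over $S\setminus A$, the composite $H_1(UTS|_{S\setminus A};\C)\to\tilde H\xrightarrow{\pi_*}H$ has image equal to that of $H_1(S\setminus A;\C)\to H_1(S;\C)$, namely $[c]^{\perp}$; moreover $U$ contains the $S^1$-fiber class, which spans the one-dimensional $\ker\pi_*$. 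Hence $\dim U=\dim[c]^{\perp}+1$, i.e.\ $U=\pi_*^{-1}([c]^{\perp})$ is a hyperplane in $\tilde H$ fixed by $\tilde\Psi(T_c)$; since $\Fix(\tilde\Psi(T_c))$ is a proper subspace containing $U$, it equals $U$, of codimension one. Finally $\tilde H^*$ is the dual of $\tilde H$: the action of $T_c$ on $\tilde H^*$ is the inverse transpose of its action on $\tilde H$, and both unipotency and $\rank(M-\id)$ are unchanged under $M\mapsto(M^{-1})^{\top}$, so the statement for $\tilde H$ yields the one for $\tilde H^*$.

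I expect the only genuinely nonroutine step to be the $\tilde H$ computation: verifying that the classes of $\tilde H$ arising from the complement of a neighborhood of $c$ span a subspace of codimension exactly one. Its two inputs are standard: that $H_1$ of the subsurface $S\setminus A$ surjects onto $[c]^{\perp}\subset H$ (a dimension count via Mayer--Vietoris, of ``half lives, half dies'' type), and that $\ker(\pi_*\colon\tilde H\to H)$ is one-dimensional, spanned by the fiber class, which lies in $U$. A purely cohomological alternative for the $\tilde H$ case bypasses $UTS$: writing $\tilde H=\C\oplus_\phi H$ for the cocycle $\phi\in Z^1(\Mod(S);H^*)$ of \Cref{subsection:UT}, one has $(\tilde\Psi(T_c)-\id)(s,v)=(\phi(T_c)(v),\pair{v,[c]}[c])$, so $\Fix(\tilde\Psi(T_c))$ has codimension one exactly when $\phi(T_c)\in H^*$ is a scalar multiple of $\pair{\cdot,[c]}$, i.e.\ annihilates $[c]^{\perp}$; and $\phi(T_c)$ does annihilate $[c]^{\perp}$, since it kills the class of any simple closed curve disjoint from $c$, once more by the locality of $T_c$. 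In either approach the essential content is the locality of the Dehn twist together with the codimension-one count for $[c]^{\perp}$.
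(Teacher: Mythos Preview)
Your proof is correct and follows the same overall strategy as the paper: reduce via \Cref{cor:biafftriv} to the three model representations and check each directly. The only difference is in which of $\tilde H$, $\tilde H^*$ is handled directly and how: the paper treats $\tilde H^*$ by citing Kasahara's result \cite[Theorem 4.2]{kasahara} that $\phi(T_c)=\lambda[c]$, whereas you treat $\tilde H$ via the geometric locality of the Dehn twist on $UTS$ and then pass to $\tilde H^*$ by duality---a self-contained substitute for the citation.
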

\begin{proof}
By \Cref{cor:biafftriv}, $V = W \oplus \C^n$, where $W$ is one of $H, \tilde H$, or $\tilde H^*$. It therefore suffices to prove the claim for $W =H$ and $W = \tilde H^*$. If $W = H$ (and hence $\rho = \Psi$), then $\Psi(T_c)(x) = x + \pair{x,c}[c]$, which is unipotent and has $1$-eigenspace $[c]^\perp \le H$ of codimension one as claimed. 

In the case $W = \tilde H^*$, the action of a Dehn twist is given as
\[
\tilde \Psi^*(T_c) = \left( \begin{array}{c|c}\Psi(T_c) & \phi(T_c) \\ \hline 0 & 1 \end{array}\right),
\]
where $\phi: \Mod(S) \to H$ is a crossed homomorphism (obtained as a representative of the extension class for $[\tilde H^*] \in \Ext^1(\C,H) \cong H^1(\Mod(S);H)$). According to \cite[Theorem 4.2]{kasahara}, there is a constant $\lambda \in \C$ (depending on $\phi, c$, and a choice of orientation of $c$) such that $\phi(T_c) = \lambda[c]$. As we saw in the previous paragraph, $\Psi(T_c) - I$ likewise has image contained in the span of $[c]$, and annihilates $[c]$, so that $\tilde \Psi^*(T_c)$ has $1$-eigenspace of codimension one and is unipotent, as claimed.
\end{proof}

\section{A criterion to be bi-affine}

The goal of this section is to prove \Cref{prop:whenDCS}, which gives a sufficient condition under which a representation $\rho: \Mod(S) \to \GL(n,\C)$ is bi-affine with core $H$.
{\bf To streamline the discussion, we impose the standing convention that if left unspecified, the core of a  bi-affine representation is $\mathbf{H}$.}
We first recall an extremely useful result formulated by Korkmaz (readily derived from the vanishing of $H^1(\Mod(S);\C)$) which we will use throughout the rest of the paper, as well as two preparatory results about representations of the symplectic group.

\begin{lemma}[Flag triviality criterion (Korkmaz), Lemma 7.1 of \cite{korkmaz}]\label{lemma:flagtriv}
    Let $\rho: \Mod(S) \to \GL(n, \C)$ be a representation. If there is a flag
    \[
    0 = W_0 \subset W_1 \subset W_2 \subset \dots \subset W_k = \C^n
    \]
    of $\Mod(S)$-invariant subspaces such that each quotient $W_i/W_{i-1}$ is a trivial $\Mod(S)$-representation, then the image of $\rho$ is trivial. In particular this holds if $\dim(W_i/W_{i-1}) \le 2g -1$ for each $i = 1, 2, \dots, k$. 
\end{lemma}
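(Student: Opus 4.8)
The plan is to show that the hypothesis forces the image of $\rho$ to be a nilpotent (equivalently, solvable) group, and then to invoke the perfectness of $\Mod(S)$ for $g \ge 3$.

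First I would reinterpret the hypothesis in operator terms. Fixing a basis of $\C^n$ adapted to the flag $W_0 \subset W_1 \subset \dots \subset W_k$, the statement that each graded piece $W_i/W_{i-1}$ is a trivial $\Mod(S)$-representation says exactly that for every $g \in \Mod(S)$ the operator $\rho(g)$ preserves the flag and induces the identity on each $W_i/W_{i-1}$; equivalently, $(\rho(g)-I)(W_i) \subseteq W_{i-1}$ for all $i$. Let $U \le \GL(n,\C)$ be the set of all invertible operators with this property. I would check the routine fact that $U$ is a subgroup: closure under products follows from $MN - I = (M-I)N + (N-I)$ together with the observation that $N$ preserves the flag, and closure under inverses from $M^{-1}-I = -M^{-1}(M-I)$ together with the observation that $M^{-1}$ preserves the flag. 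By construction $\rho(\Mod(S)) \subseteq U$.

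Next I would note that $U$ is nilpotent: refining the given flag to a complete flag exhibits $U$ as a subgroup of the group of upper-triangular unipotent $n\times n$ matrices, which is nilpotent, and subgroups of nilpotent groups are nilpotent. Hence $\rho(\Mod(S))$ is nilpotent. On the other hand, since $g \ge 3$ the group $\Mod(S)$ is perfect, so its homomorphic image $\rho(\Mod(S))$ is perfect. A perfect nilpotent group is trivial (its lower central series is stationary and equal to the whole group, so it never reaches the identity unless the group is already trivial), so $\rho$ is trivial. Finally, for the ``in particular'' clause: if $\dim(W_i/W_{i-1}) \le 2g-1$ for all $i$, then each $W_i/W_{i-1}$ is a representation of $\Mod(S)$ of dimension at most $2g-1$, hence trivial by \Cref{thm:korkmaz2g-1}, so the hypothesis of the first part is satisfied and the conclusion follows.

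I do not expect a serious obstacle here: the argument is essentially a formal consequence of the perfectness of $\Mod(S)$ combined with elementary linear algebra. The only points needing any care are verifying that the flag-preserving, graded-trivial operators really do form a group and that this group is nilpotent (equivalently solvable) — and both of these are standard. (One could equally phrase the conclusion as: the image of $\rho$ consists of unipotent operators and lies in a solvable group, and a perfect solvable group is trivial.)
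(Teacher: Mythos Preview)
Your argument is correct and is the standard proof of this fact. Note that the paper does not actually supply its own proof of this lemma; it is stated with attribution to Korkmaz and a citation to \cite[Lemma~7.1]{korkmaz}, so there is nothing in the present paper to compare against beyond confirming that your proof establishes the stated result, which it does.
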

Invariant flags turn out to be ubiquitous, in large part because of the following principle. Suppose $R \subset S$ is a subsurface, and let $c \subset S \setminus R$ be a simple closed curve. Then $\Mod(R)$ commutes with $T_c$, and since commuting linear transformations preserve (generalized) eigenspaces, any flag of generalized eigenspaces for $\rho(T_c)$ is a $\Mod(R)$-invariant flag. We will often use this principle without further comment.

    \begin{lemma}\label{lemma:smallestsp}
        For $g \ge 3$, the irreducible representations of $\Sp(2g,\C)$ of smallest dimension are the trivial representation $\C$ of dimension $1$, the standard representation $H$ of dimension $2g$, and $\wedge^2H/\C$ of dimension $\binom{2g}{2}-1$ (for $g = 3$ there is a second representation of dimension $\binom{2g}{2}-1 = 14$).
    \end{lemma}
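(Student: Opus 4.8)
The plan is to invoke the highest-weight classification of irreducible $\Sp(2g,\C)$-representations together with the Weyl dimension formula. The key structural input is a monotonicity principle: writing $V_\lambda$ for the irreducible with dominant highest weight $\lambda$, the formula
\[
\dim V_\lambda = \prod_{\alpha > 0}\frac{\langle\lambda+\rho,\alpha\rangle}{\langle\rho,\alpha\rangle}
\]
has every factor $\ge 1$ when $\lambda$ is dominant, and if $\mu-\lambda$ is a nonzero dominant weight then some factor strictly increases while none decreases, so $\dim V_\mu > \dim V_\lambda$. Consequently, if $\lambda = \sum_i a_i\omega_i \ne 0$ is dominant and $a_i \ge 1$, then $\lambda-\omega_i$ is dominant, so $\dim V_\lambda \ge \dim V_{\omega_i}$; hence the irreducibles of smallest dimension lie among the trivial representation and the fundamental representations $V_{\omega_1},\dots,V_{\omega_g}$.

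Next I would pin down the fundamental dimensions. By the standard realization of the fundamental representations of the symplectic group (Fulton--Harris), $V_{\omega_k}$ is the kernel of contraction $\wedge^k H \to \wedge^{k-2}H$ by the symplectic form for $1 \le k \le g$, so $d_k := \dim V_{\omega_k} = \binom{2g}{k}-\binom{2g}{k-2}$; in particular $d_1 = 2g$ and $d_2 = \binom{2g}{2}-1 = (g-1)(2g+1)$, and $\wedge^2H/\C \cong V_{\omega_2}$. A direct simplification shows $d_{k+1}-d_k$ has the same sign as $2(g-k)^2 + g - 2k - 1$, an upward parabola in $k$ whose larger root $\tfrac{2g+1+\sqrt{2g+3}}{2}$ exceeds $g$; hence $d_k$ is unimodal on $\{1,\dots,g\}$ (increasing, then decreasing), so $\min_{3\le k\le g} d_k = \min(d_3, d_g)$. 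One then checks $d_3 \ge d_2$ with equality iff $g = 3$ (the inequality rearranges to $(g-3)(2g-1)\ge 0$), and $d_g \ge d_2$, again with equality iff $g = 3$ (this rearranges to $2\binom{2g}{g} \ge (g-1)(g+1)(g+2)$, an equality at $g=3$, immediate at $g=4$, and a consequence of a crude lower bound on the central binomial coefficient for $g\ge 5$). Thus for $g \ge 4$ the two smallest nontrivial fundamental dimensions are $d_1 = 2g$ and $d_2 = \binom{2g}2-1$, each realized once, while for $g = 3$ the value $d_2 = 14$ is also attained by $V_{\omega_3} \ne V_{\omega_2}$, accounting for the stated exceptional representation.

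To finish the classification, suppose $V_\lambda$ is a nontrivial irreducible with $\lambda \notin\{\omega_1,\omega_2\}$, and $\lambda \ne \omega_3$ when $g=3$; I claim $\dim V_\lambda > \binom{2g}2-1$. If $\lambda = \omega_k$ with $k\ge 3$ this is the previous step. Otherwise $\lambda = \sum_i a_i\omega_i$ with $\sum_i a_i \ge 2$: if $\lambda = a_1\omega_1$ with $a_1 \ge 2$ then $\dim V_\lambda \ge \dim V_{2\omega_1} = \dim \Sym^2 H = g(2g+1) > \binom{2g}2-1$; otherwise some $a_j$ with $j \ge 2$ is positive, so $\lambda-\omega_j$ is a nonzero dominant weight and strict monotonicity gives $\dim V_\lambda > \dim V_{\omega_j} = d_j \ge d_2$. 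Together with the elementary chain $1 < 2g < \binom{2g}2-1$ for $g\ge 3$, this yields the lemma.

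I expect the only real work to be the numerical estimates of the second step: the unimodality computation and, in particular, the bound $2\binom{2g}{g}\ge(g-1)(g+1)(g+2)$ needed to confirm that the only coincidence $d_k = \binom{2g}2-1$ with $k \ge 3$ occurs at $g=k=3$. Everything else is formal bookkeeping with the Weyl dimension formula.
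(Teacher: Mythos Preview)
Your argument is correct and self-contained, but it takes a genuinely different route from the paper. The paper's entire proof is a one-line citation: the low-dimensional irreducible representations of semisimple Lie groups are tabulated in \cite[Table~1]{AVE}, and one simply reads off the result for type $C_g$.

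By contrast, you give a direct argument from the Weyl dimension formula. Your monotonicity principle (that $\dim V_\mu > \dim V_\lambda$ whenever $\mu - \lambda$ is dominant and nonzero) correctly reduces the problem to the fundamental representations $V_{\omega_k}$; the explicit realization $V_{\omega_k} \cong \ker(\wedge^k H \to \wedge^{k-2}H)$ then yields $d_k = \binom{2g}{k} - \binom{2g}{k-2}$, and your endpoint checks $d_3 \ge d_2$ and $d_g \ge d_2$ (both with equality exactly at $g=3$) finish the fundamental case. The residual case $\sum a_i \ge 2$ is handled cleanly by monotonicity together with $\dim V_{2\omega_1} = \dim \Sym^2 H = g(2g+1) > d_2$. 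Your approach has the virtue of being entirely self-contained and of making transparent exactly where the $g=3$ coincidence comes from; the paper's approach buys brevity by outsourcing the work to a standard reference. Either is appropriate for a lemma of this kind.
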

    \begin{proof}
        The low-dimensional representations of semi-simple Lie groups are tabulated in \cite[Table 1]{AVE}. 
    \end{proof}
    
Recall that a {\em transvection} on $H$ is a transformation $T_x(y) = y + \pair{x,y}x$, where $x,y \in H$.
    \begin{lemma}\label{lemma:sprep}
        For $g \ge 3$, let $V$ be a $\Sp(2g,\Z)$-module of dimension at most $4g-1$. Suppose that every transvection acts unipotently on $V$. Then either $V$ is trivial, or else $V=H \oplus \C^d$; in particular, $V$ is bi-affine.
    \end{lemma}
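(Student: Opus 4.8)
The plan is to reduce to the classification of low-dimensional $\Sp(2g,\mathbb{Z})$-modules, exploiting that over $\mathbb{C}$ such modules are semisimple as $\mathrm{Sp}(2g,\mathbb{C})$-representations (via strong approximation / the congruence subgroup property, or because $\mathrm{Sp}(2g,\mathbb{Z})$ is Zariski-dense in $\mathrm{Sp}(2g,\mathbb{C})$ and finite-dimensional complex representations of the arithmetic group extend to algebraic ones in the relevant range — this is already implicitly used in the paper's sources). So first I would decompose $V = \bigoplus_i V_i^{\oplus m_i}$ into $\mathrm{Sp}(2g,\mathbb{C})$-irreducibles. By Lemma \ref{lemma:smallestsp}, since $\dim V \le 4g-1 < \binom{2g}{2}-1$ for $g \ge 3$, the only irreducible constituents available are the trivial representation $\mathbb{C}$ and the standard representation $H$ (of dimension $2g$); moreover the dimension bound forces at most one copy of $H$ to appear. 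So $V \cong H^{\oplus \varepsilon} \oplus \mathbb{C}^{\oplus k}$ with $\varepsilon \in \{0,1\}$ as an abstract $\mathrm{Sp}$-module.

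The subtlety is that $V$ need not be semisimple \emph{as an $\Sp(2g,\mathbb{Z})$-module}: the issue is extensions. However, the only nontrivial extensions that can occur among these constituents are governed by $H^1(\Sp(2g,\mathbb{Z}); H)$ and $H^1(\Sp(2g,\mathbb{Z}); H^*)$, both of which vanish for $g \ge 3$ (this is classical — see e.g. Borel's stability results, or the computation in \cite{moritajacobians}), and by $H^1(\Sp(2g,\mathbb{Z});\mathbb{C}) = 0$ (as $\Sp(2g,\mathbb{Z})$ is perfect). Hence there are no nontrivial extensions at all, and $V \cong H^{\oplus \varepsilon} \oplus \mathbb{C}^{\oplus k}$ as an $\Sp(2g,\mathbb{Z})$-module after all. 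If $\varepsilon = 0$ then $V$ is trivial, hence bi-affine with $V_1 = V_2 = V$. If $\varepsilon = 1$, then $V \cong H \oplus \mathbb{C}^k$, and this is bi-affine: take $V_1 = \mathbb{C}^k$ (the sum of the trivial summands, on which the action is trivial) and $V_2 = V$, so that $V/V_2 = 0$ is trivial and $V_1$ is trivial, with core $H$.

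Alternatively — and this is probably the cleaner route, avoiding any cohomology vanishing input beyond what is needed — I would instead use the unipotency hypothesis directly to rule out the bad extensions, since any extension of $H$ by $\mathbb{C}$ (or vice versa) on which all transvections act unipotently with $1$-eigenspace of the expected codimension is constrained: a transvection $T_x$ acts on $H$ with $(T_x - I)$ of rank one, so on a would-be nonsplit extension it would still have to be unipotent, but the point is that the relevant cocycle in $Z^1(\Sp(2g,\mathbb{Z}); H)$ or $Z^1(\Sp(2g,\mathbb{Z}); H^*)$ is forced to be a coboundary by the cohomology vanishing above, so this is really the same argument. Either way the one genuine thing to check is the dimension arithmetic: that $4g - 1 < \binom{2g}{2} - 1$ for all $g \ge 3$ (true since $\binom{2g}{2} = 2g^2 - g \ge 4g$ iff $2g^2 \ge 5g$ iff $g \ge 3$, with room to spare for $g > 3$), so that the large irreducibles of Lemma \ref{lemma:smallestsp} cannot intrude, and that two copies of $H$ would already exceed the bound ($4g > 4g-1$). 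The main obstacle is purely expository: deciding how much of the semisimplicity-of-$\Sp(2g,\mathbb{Z})$-representations machinery to invoke versus citing it; the mathematical content is light once Lemma \ref{lemma:smallestsp} and the vanishing of $H^1(\Sp(2g,\mathbb{Z}); H)$ are in hand.
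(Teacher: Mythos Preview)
There is a genuine gap: you never actually use the unipotence hypothesis, and the step where you need it is precisely the one you hand-wave. It is \emph{not} true that every finite-dimensional complex $\Sp(2g,\Z)$-module extends to a representation of $\Sp(2g,\C)$. For $g=3$, pull back the $7$-dimensional complex irreducible of $\Sp(6,\mathbb{F}_2)$ along $\Sp(6,\Z)\twoheadrightarrow\Sp(6,\mathbb{F}_2)$: this is an irreducible $\Sp(6,\Z)$-module of dimension $7\le 4g-1=11$ which has neither $\C$ nor $H$ as a subquotient, so it is not bi-affine (with core $H$) and does not come from $\Sp(6,\C)$. Margulis superrigidity only says a representation extends to the ambient algebraic group after one controls the component group of the Zariski closure of the image; the paper's proof spends its first paragraph on exactly this, using unipotence of transvections to exhibit one-parameter subgroups through every element of $\im(\rho)$ and hence prove $\overline{\im(\rho)}$ is connected. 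Your appeal to Zariski density, strong approximation, or CSP does not substitute for this --- those facts are about the source group, not about the image of an arbitrary representation.

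Your ``alternative'' route misdiagnoses the role of unipotence: you propose using it to kill extensions between $H$ and $\C$, but once the representation does extend to $\Sp(2g,\C)$ there is nothing to kill (algebraic representations of a reductive group are completely reducible, or, as you note, $H^1(\Sp(2g,\Z);H)=0$). The paper instead uses the flag triviality criterion (\Cref{lemma:flagtriv}) to collapse the trivial pieces, which is equivalent in effect. So the second half of your argument is fine and indeed slightly sharper than the paper's (you would conclude $V\cong H^\varepsilon\oplus\C^k$ outright), but it rests on the unjustified first step.
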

    \begin{proof}
        A result of Harman \cite[Corollary 2.3]{harman} asserts that for $n > 2$, any representation $\rho: \SL_n(\Z) \to \GL_N(\C)$ for which the elementary matrices act unipotently extends to an algebraic representation $\tilde \rho: \SL_n(\C) \to \GL_N(\C)$. The proof of this uses only superrigidity for $\SL_n(\Z)$ and the congruence subgroup property for $\SL_n(\Z)$, both of which hold for $\Sp(2g,\Z)$ for $g \ge 2$. Thus $\rho$ extends to a representation $\rho: \Sp(2g,\C) \to GL(V)$. As $\Sp(2g,\C)$ is semi-simple, $V$ decomposes as a direct sum of irreducibles $V = \bigoplus_{i=1}^k V_i$. As $\dim(V) = 4g-1 < \binom{2g}{2}-1$, by \Cref{lemma:smallestsp}, at most one of the summands $V_i$ can be nontrivial, in which case it must be isomorphic to $H$.
    \end{proof}

    \begin{proposition}\label{prop:whenDCS}
    Let $g \ge 3$, and let $S = \Sigma_{g,1}$ be a surface of genus $g$ with one boundary component. Let $\rho: \Mod(S) \to \GL(n, \C)$ be a representation, endowing $V = \C^{n}$ with the structure of a $\Mod(S)$-module. Suppose that the following conditions hold:
    \begin{enumerate}[label=(\Alph*)]
        \item The restriction of $\rho$ to $\cK(S)$ is trivial,
        \item For any nonseparating curve $c \subset S$, $\rho(T_c)$ is unipotent. 
    \end{enumerate}
    Then for $n \le 4g-1$, either $V$ is trivial, or else $V$ is bi-affine (with core $H$).
\end{proposition}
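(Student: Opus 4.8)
The plan is to leverage hypotheses (A) and (B) to show that $\rho(\cI(S))$ is a \emph{unipotent} subgroup of $\GL(V)$, then to organize $V$ into a $\Mod(S)$-invariant flag whose graded pieces are $\Sp(2g,\Z)$-modules, and finally to invoke the symplectic input already packaged in \Cref{lemma:sprep}. To establish unipotence: by (A), $\cK(S) \le \ker(\rho)$, so $\rho|_{\cI(S)}$ factors through $\cI(S)/\cK(S)$. Since $\cK(S) = \ker(\tau)$ for the Johnson homomorphism $\tau \colon \cI(S) \to A_S$ with $A_S$ torsion-free abelian, the quotient $\cI(S)/\cK(S)$ embeds in $A_S$ and is abelian, hence $\rho(\cI(S))$ is abelian. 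By \Cref{theorem:johnsonBPs}, $\cI(S)$ is generated by bounding pair maps $T_aT_b^{-1}$ with $a,b$ disjoint nonseparating curves; since $T_a$ and $T_b$ commute, $\rho(T_a)$ and $\rho(T_b)$ are commuting unipotent operators by (B), so $\rho(T_aT_b^{-1})$ is unipotent. Thus $\rho(\cI(S))$ is abelian and generated by unipotents, hence unipotent (a commuting family of unipotent operators is simultaneously conjugate into the upper unitriangular matrices).

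Next I would form the augmentation filtration $V = V^{(0)} \supseteq V^{(1)} \supseteq \cdots$, where $V^{(k+1)}$ is the span of $\{(\rho(u)-1)v : u \in \cI(S),\ v \in V^{(k)}\}$. Unipotence of $\rho(\cI(S))$ forces this to terminate at $0$, and normality of $\cI(S)$ in $\Mod(S)$, via $g\cdot(\rho(u)-1)v = (\rho(gug^{-1})-1)(g\cdot v)$, shows each $V^{(k)}$ is $\Mod(S)$-invariant. By construction $\cI(S)$ acts trivially on each quotient $V^{(k)}/V^{(k+1)}$, which is therefore a module over $\Mod(S)/\cI(S) \cong \Sp(2g,\Z)$ of dimension at most $4g-1$. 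Every transvection of $\Sp(2g,\Z)$ is a power of the image of a Dehn twist $T_c$ about a nonseparating curve, and $\rho(T_c)$ is unipotent on $V$ hence on every subquotient; so every transvection acts unipotently on $V^{(k)}/V^{(k+1)}$. By \Cref{lemma:sprep}, each $V^{(k)}/V^{(k+1)}$ is bi-affine as an $\Sp(2g,\Z)$-module, and since its dimension is less than $\binom{2g}{2}-1$, \Cref{lemma:smallestsp} shows it admits an $\Sp(2g,\Z)$-invariant flag whose graded pieces are each either trivial of dimension $1$ or isomorphic to $H$.

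Pulling these flags back to the $V^{(k)}$ and concatenating produces a $\Mod(S)$-invariant flag $0 = W_0 \subset W_1 \subset \cdots \subset W_m = V$ in which every graded piece $W_i/W_{i-1}$ is either trivial of dimension $1$ or isomorphic to $H$; since $\dim V \le 4g-1 < 4g$, at most one graded piece can be isomorphic to $H$. If none is, \Cref{lemma:flagtriv} shows $V$ is trivial. If $W_j/W_{j-1} \cong H$ for exactly one $j$, then the flag $0 \subset W_1 \subset \cdots \subset W_{j-1}$ has all graded pieces trivial of dimension $\le 2g-1$, so \Cref{lemma:flagtriv} applied to the subrepresentation $W_{j-1}$ shows $\Mod(S)$ acts trivially on $W_{j-1}$, and likewise on $V/W_j$; hence $V$ is bi-affine with respect to $W_{j-1} \le W_j \le V$, with core $H$. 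The main obstacle is concentrated in the first two paragraphs: extracting unipotence of $\rho(\cI(S))$ from (A), (B), and Johnson's generation theorem, and verifying that the $\cI(S)$-augmentation filtration is $\Mod(S)$-stable so that the problem genuinely reduces to the symplectic representation theory of \Cref{lemma:sprep}; everything after that is bookkeeping with flags and dimensions.
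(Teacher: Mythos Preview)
Your argument is correct and reaches the same conclusion, but the organization differs from the paper's. Both proofs begin identically: hypotheses (A) and (B), together with Johnson's theorems, force $\rho(\cI(S))$ to be abelian and unipotent. From there the paper proceeds by induction on $n$: it considers the single submodule $V^{\cI(S)}$ (nonzero by unipotence), applies \Cref{lemma:sprep} directly if this is all of $V$, and otherwise invokes the inductive hypothesis on $V^{\cI(S)}$ (or on $V/V^{\cI(S)}$) together with \Cref{lemma:againDCS} to propagate the bi-affine structure up to $V$. You instead take the entire descending augmentation filtration by $\cI(S)$ at once, apply \Cref{lemma:sprep} to every graded piece, pull back and concatenate the resulting flags, and finish with a dimension count and \Cref{lemma:flagtriv}. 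Your route is non-inductive and bypasses \Cref{lemma:againDCS} entirely; the cost is a bit more flag bookkeeping. One small cleanup: your appeal to \Cref{lemma:smallestsp} at the end of the second paragraph is misplaced, since that lemma concerns irreducible $\Sp(2g,\C)$-modules while you are working with $\Sp(2g,\Z)$-modules and have not invoked superrigidity yourself. But this is harmless---the composition series with factors $\C$ or $H$ that you want is already delivered by \Cref{lemma:sprep} (under the paper's standing convention that the core is $H$), so you can simply drop that citation.
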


\begin{proof}
    We proceed by induction on $m = n-2g$. Korkmaz's \Cref{thm:korkmaz2g} provides the base case $m = 0$. Before proceeding any further, observe that if $V$ satisfies hypotheses (A) and (B), then so does any submodule $W \le V$, and likewise for any quotient module $V/W$.

    We consider the restriction of $\rho$ to the Torelli group $\cI(S)$. By hypothesis (A), $\rho|_{\cI(S)}$ factors through $\cI(S)/ \cK(S)$. Recalling the discussion of \Cref{subsection:johnson}, $\rho|_{\cI(S)}$ factors through the Johnson homomorphism $\tau: \cI(S) \to A_S$; in particular, the image is abelian. By \Cref{theorem:johnsonBPs}, $\cI(S)$ is generated by bounding pair maps $T_a T_b^{-1}$, where $a$ and $b$ are disjoint and homologous. By hypothesis (B), it follows that every bounding pair map acts unipotently. In summary, the action of $\cI(S)$ on $V$ is both unipotent and abelian.

    In light of this, the fixed space $V^{\cI(S)}$ is nonempty. This is a $\Mod(S)$-submodule, and moreover since $\cI(S)$ acts on it trivially, the action of $\Mod(S)$ on $V^{\cI(S)}$ factors through the symplectic group $\Sp(2g,\Z)$. If $V^{\cI(S)} = V$, then the result follows by \Cref{lemma:sprep}. Otherwise, $V^{\cI(S)}$ is a proper submodule of $V$. By induction, either $V^{\cI(S)}$ is bi-affine, or else $V^{\cI(S)}$ is trivial. 
    
    In the former case, as $\dim(V) \le 4g-1$ and $V$ contains a copy of $H$ of dimension $2g$, the dimension bound of \Cref{lemma:againDCS} is seen to hold, and so by that result, $V$ is bi-affine. If $V^{\cI(S)}$ is trivial, then by \Cref{lemma:flagtriv}, either $V$ itself is trivial, or else the complement $V/V^{\cI(S)}$ must be a nontrivial $\Mod(S)$-module. By induction, this must be bi-affine, and again since $\dim(V) \le 4g-1$, the dimension bound of \Cref{lemma:againDCS} holds and we conclude that $V$ is bi-affine.
\end{proof}

\section{Unipotence}

In this section we establish two key results regarding the structure of Dehn twists in representations up to a certain range. In \Cref{prop:unipotence}, we show that Dehn twists act unipotently in every $\Mod(S)$-representation up to dimension $4g-3$. It is worth noting that a well-known folklore result shows that Dehn twists act {\em quasi-unipotently} (i.e. all eigenvalues are roots of unity) in {\em any} linear representation (see \cite[Corollary 3.5]{button} and/or \cite[Proposition 2.4]{ArSou}). The second key result is \Cref{prop:codimenison}, which shows that under stricter hypotheses, moreover the genuine $1$-eigenspace has codimension $1$.

\begin{proposition}\label{prop:unipotence}
    Let $g \ge 4$, and let $\rho: \Mod(S) \to \GL(n, \C)$ be a representation. Let $c \subset S$ be a nonseparating simple closed curve. Then for $0 \le n \le 4g-3$, $\rho(T_c)$ is unipotent.
\end{proposition}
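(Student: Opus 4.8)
The plan is to reduce to an eigenvalue statement and then exploit the genus of $S$ through subsurface subgroups. By the change-of-coordinates principle, any two Dehn twists about nonseparating curves are conjugate in $\Mod(S)$, so it suffices to show that $A := \rho(T_c)$ is unipotent for a single nonseparating $c$; equivalently, that $1$ is the only eigenvalue of $A$. Decompose $V = \bigoplus_\lambda V_\lambda$ into generalized eigenspaces of $A$ and suppose, for contradiction, that some $V_\mu$ with $\mu \neq 1$ is nonzero. Choose a subsurface $\Sigma \cong \Sigma_{g-1,1}$ of $S$ disjoint from $c$ (possible since $g\geq 2$); then $G' := \Mod(\Sigma)$ commutes with $T_c$ and hence preserves every $V_\lambda$. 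Since $g-1 \geq 3$, Korkmaz's theorems apply to $G'$: by \Cref{thm:korkmaz2g-1} it acts trivially on every $V_\lambda$ of dimension at most $2g-3$, and by \Cref{thm:korkmaz2g} it acts either trivially or through the symplectic representation $H_1(\Sigma)$ on every $V_\lambda$ of dimension exactly $2g-2$.

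I would then bring in two pieces of bookkeeping. Since $n \leq 4g-3 < 2(2g-1)$, at most one eigenvalue can have generalized eigenspace of dimension $\geq 2g-1$. And because $c$ is nonseparating, $T_c$ is conjugate to $T_c^{-1}$ in $\Mod(S)$ (there is a self-homeomorphism of $S$ fixing $\partial S$ and reversing $c$), so $\dim V_\lambda = \dim V_{\lambda^{-1}}$ for all $\lambda$; in particular $\mu^{-1}$ is also a nontrivial eigenvalue of the same multiplicity. These facts, together with the budget $4g-3 = (2g-2)+(2g-1)$, confine the multiset of eigenspace dimensions to a short list. In the principal case, where all non-$1$ eigenspaces have dimension $\leq 2g-2$, one shows — after ruling out the symplectic alternative above — that $\Mod(S\setminus c)$ (generated by the genus-$(g-1)$, one-boundary subsurface groups it contains) acts trivially on every $V_\mu$ with $\mu \neq 1$; comparing eigenvalue multiplicities of $\rho(T_{c'}) \sim A$ for nonseparating $c'$ disjoint from $c$ then forces $\dim V_1$ to be large, so the nontrivial part of $V$ is small, and one concludes using the braid relation against a curve meeting $c$ once together with the perfectness of $\Mod(S)$ for $g\geq 3$.

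The main obstacle is the threshold case where $\dim V_\mu = 2g-2$ and $G'$ acts on $V_\mu$ irreducibly through $H_1(\Sigma)$: Schur's lemma then forces $\rho(T_c)|_{V_\mu} = \mu\cdot\mathrm{id}$, so $\rho(T_c)$ is semisimple, and the inversion symmetry forces either $\mu = -1$ or a pair $V_\mu, V_{\mu^{-1}}$ of full $(2g-2)$-dimensional eigenspaces that exhaust essentially all of $V$. Closing this case needs genuinely new input: either showing such a configuration contradicts a relation in $\Mod(S)$ (the $2$-chain relation $(T_aT_b)^6 = T_\delta$ and the lantern relation constrain determinants, hence products of eigenvalues, across overlapping families of conjugate twists), or proving that $\mu$ is a root of unity and then excluding the possibility that $\rho(T_c)$ has finite order greater than $1$ — the latter because, for $g \geq 4$, any quotient of $\Mod(S)$ in which all nonseparating twists have bounded order has no nontrivial representation of dimension $\leq 4g-3$ (for instance its congruence quotients $\Sp(2g,\mathbb{F}_p)$ have no such small representations). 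The careful case analysis around dimension $2g-2$, together with this finite-order exclusion, is where the real work lies.
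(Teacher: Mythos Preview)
Your proposal has a genuine error and, more importantly, is not a proof but an outline with an acknowledged unresolved case. The error: $T_c$ is \emph{not} conjugate to $T_c^{-1}$ in $\Mod(S)$. For orientation-preserving $h$ one has $hT_ch^{-1}=T_{h(c)}$, which is again a (right) Dehn twist and never equals the left twist $T_c^{-1}$; an orientation-reversing map would be needed, and $\Mod(S)$ contains none. So the symmetry $\dim V_\lambda=\dim V_{\lambda^{-1}}$ is unavailable, and your bookkeeping about pairing eigenspaces collapses. Even ignoring this, your ``main obstacle'' paragraph concedes that the case $\dim V_\mu=2g-2$ with $G'$ acting symplectically is open; the suggestions about determinant constraints or congruence quotients of $\Sp(2g,\mathbb F_p)$ are plausible-sounding but not an argument.

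The paper avoids all of this with two ideas you are missing. First, \emph{induction on $n$}: once you have an invariant proper subspace you are done. Second, the invariance lemma (\Cref{lemma:generalizedinvariant}): if two nonseparating curves $d,d'$ with $i(d,d')=1$ have the same generalized $\lambda$-eigenspace, that space is $\Mod(S)$-invariant. With these in hand the proof is short. Fix $\lambda\ne 1$ and look only at $E_{\lambda,\mathrm{gen}}^c$. If $\dim E_{\lambda,\mathrm{gen}}^c\le 2g-2$, Korkmaz applied to $\Mod(S\setminus c)$ forces Dehn twists to act unipotently there, contradicting $\lambda\ne 1$. If instead the complement has dimension $\le 2g-2$, then on that complement every nonseparating $d\subset S\setminus c$ acts unipotently, hence $E_{\lambda,\mathrm{gen}}^d=E_{\lambda,\mathrm{gen}}^c$ for all such $d$; picking $d,d'\subset S\setminus c$ with $i(d,d')=1$ and applying \Cref{lemma:generalizedinvariant} makes $E_{\lambda,\mathrm{gen}}^c$ a $\Mod(S)$-invariant subspace of dimension $<n$, and induction finishes. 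The case $\lambda_\sharp=n$ is handled separately with the lantern relation. No fine case analysis on the multiset of eigenspace dimensions, no appeal to finite quotients, and no need for the (false) inversion symmetry.
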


\begin{remark}
    \Cref{prop:unipotence} holds more generally for $S = \Sigma_{g,n}^b$ a surface of genus $g \ge 4$ with any number of punctures and/or boundary components, since the results of Korkmaz on which it relies hold in this setting.
\end{remark}

The proof uses the following lemma, a generalization of \cite[Lemma 4.3]{korkmaz} with the same proof. Here and throughout, for a simple closed curve $c \subset S$, we write 
\[
E_{\lambda,k}^c = \ker((\rho(T_c)-\lambda I)^k)
\]
for the degree-$k$ generalized $\lambda$-eigenspace of $\rho(T_c)$. In the case $k = 1$ of the genuine eigenspace, we will write simply $E_\lambda^c$.

\begin{lemma}[Korkmaz, cf. Lemma 4.3 of \cite{korkmaz}]\label{lemma:generalizedinvariant}
    Let $b,c \subset S$ be nonseparating simple closed curves satisfying $i(b,c) = 1$. Let $\rho$ be a linear representation of $\Mod(S)$ and let $\lambda$ be an eigenvalue of $\rho(T_b)$ (and hence of $\rho(T_c)$ also). If $E_{\lambda,k}^b = E_{\lambda,k}^c$ for some $k \ge 1$, then $E_{\lambda,k}^b$ is $\Mod(S)$-invariant.
\end{lemma}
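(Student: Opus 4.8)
The statement is: if $b, c$ are nonseparating with $i(b,c) = 1$ and $E_{\lambda,k}^b = E_{\lambda,k}^c$, then this common subspace is $\Mod(S)$-invariant. The key structural fact about the pair $(b,c)$ is the lantern/chain relation machinery: there is a mapping class $f$ (built from twists about $b$ and $c$) realizing the "change of coordinates" that cycles $b \to c \to$ (something) — more precisely, since $i(b,c)=1$, the subsurface they fill is a one-holed torus, and $T_bT_cT_b = T_cT_bT_c$ (the braid relation). Actually the cleanest input is: $\Mod(S)$ is generated by Dehn twists about nonseparating curves, and any two such curves are related by a sequence of curves each meeting the next once; so it suffices to show that for a third nonseparating curve $d$ with $i(c,d)=1$, the twist $T_d$ preserves the common eigenspace $W := E_{\lambda,k}^b = E_{\lambda,k}^c$.

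Let me restructure along Korkmaz's original argument. First I would observe that $W = E_{\lambda,k}^b$ is automatically invariant under the centralizer of $T_b$ in $\Mod(S)$, and likewise $W = E_{\lambda,k}^c$ is invariant under the centralizer of $T_c$; so $W$ is invariant under the subgroup $\Gamma$ generated by both centralizers. The main step is then to show that these two centralizers together generate all of $\Mod(S)$ (or at least a subgroup acting the same way, e.g. containing enough twists). The centralizer of $T_b$ contains $T_{b'}$ for every curve $b'$ disjoint from $b$, and similarly for $c$; since $b$ and $c$ jointly fill only a one-holed torus, there is a large supply of curves disjoint from one or the other, and a connectivity argument on the curve complex (every nonseparating curve can be connected to $b$ or to $c$ through curves each disjoint from $b$ or from $c$) shows $\Gamma$ contains $T_d$ for all nonseparating $d$, hence $\Gamma = \Mod(S)$. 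This is where one invokes that $g \ge 3$ (or even $g \ge 2$) so the relevant curve graphs are connected.

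The one subtlety — and the step I'd expect to need the most care — is verifying that the eigenvalue $\lambda$ of $\rho(T_b)$ is also an eigenvalue of $\rho(T_c)$ and that the generalized eigenspace notation is consistent: this is because $T_b$ and $T_c$ are conjugate in $\Mod(S)$ (any two nonseparating twists are), so $\rho(T_b)$ and $\rho(T_c)$ have the same Jordan form, which is exactly why the hypothesis $E_{\lambda,k}^b = E_{\lambda,k}^c$ is not vacuous. With that in hand, the invariance of $W$ under $C(T_b)$ follows because conjugating $\rho(T_b)$ by $\rho(f)$ for $f \in C(T_b)$ fixes $\rho(T_b)$, hence permutes — in fact fixes — each generalized eigenspace $\ker((\rho(T_b) - \lambda I)^k)$. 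I would then assemble: $W$ is $C(T_b)$-invariant and $C(T_c)$-invariant, these centralizers generate $\Mod(S)$ by the curve-graph connectivity argument, so $W$ is $\Mod(S)$-invariant. I expect the bulk of the writing to be the generation statement, which is why the lemma is attributed to Korkmaz "with the same proof" — the argument is essentially identical to his Lemma 4.3, just allowing a generalized eigenspace ($k \ge 1$) in place of a genuine one.
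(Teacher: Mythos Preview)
Your overall strategy is sound and ultimately equivalent to Korkmaz's argument (which the paper defers to, and sketches later in the proof of \Cref{lemma:LI}): the key mechanism in both is change of coordinates plus connectivity of the graph $\cC$ of nonseparating curves with edges for $i(\cdot,\cdot)=1$. Korkmaz packages this as \emph{propagation of the eigenspace equality}: given $E_{\lambda,k}^b = E_{\lambda,k}^c$, choose any $d$ with $i(b,d)=1$, take $f\in\Mod(S)$ with $f(b)=b$ and $f(c)=d$, and conclude $E_{\lambda,k}^d = \rho(f)E_{\lambda,k}^c = \rho(f)E_{\lambda,k}^b = E_{\lambda,k}^b$; then connectivity of $\cC$ gives $E_{\lambda,k}^b = E_{\lambda,k}^d$ for \emph{every} nonseparating $d$, and since $T_d$ preserves its own generalized eigenspace, $W$ is invariant under all nonseparating twists, hence under $\Mod(S)$. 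Your centralizer framing reaches the same conclusion, and the observation that $W$ is invariant under $C(T_b)\cup C(T_c)$ is correct.

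There is, however, a gap in your connectivity step. The claim ``every nonseparating curve can be connected to $b$ or to $c$ through curves each disjoint from $b$ or from $c$'' does not by itself yield $T_d\in\Gamma$: a path of pairwise-disjoint curves gives you commuting twists, not a conjugacy relation, so knowing $T_{d_i}\in\Gamma$ for the intermediate curves says nothing about $T_d$ when $d$ meets both $b$ and $c$. The correct generation argument is that $\Gamma=\langle C(T_b),C(T_c)\rangle$ acts transitively on nonseparating curves: $T_bT_c\in\Gamma$ sends $b$ to $c$, change of coordinates inside $\mathrm{Stab}(b)\subset\Gamma$ sends $c$ to any $e$ with $i(b,e)=1$, and then connectivity of $\cC$ (edges for $i=1$, not $i=0$) finishes. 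But unwinding this, you are exactly reproducing the propagation step above, so once the gap is fixed your argument and Korkmaz's coincide.
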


We now give the proof of \Cref{prop:unipotence}:

\begin{proof}
    We proceed by induction on $n$. The base cases $n\le 2g$ follow from Korkmaz (\Cref{thm:korkmaz2g}). For $n>2g$, suppose for contradiction that $\rho(T_c)$ has an eigenvalue $\lambda \ne 1$. Let $\lambda_\sharp$ denote the multiplicity of $\lambda$ as an eigenvalue. 

    Firstly, suppose $\lambda_\sharp = n$. If $\rho(T_c)=\lambda I_n$, applying $\rho$ to the lantern relation results in $\lambda^4=\lambda^3$ and hence $\lambda = 1$, a contradiction. Thus 
    \[
    1 \le \dim E_{\lambda}^c \le n-1 \le 4g-4.
    \]
    In particular, either $E_{\lambda}^c$ or $\C^n/ E_{\lambda}^c$ has dimension between $1$ and $2g-2$. Both of these are invariant under $\Mod(S\setminus\{c\})$, where $S\setminus\{c\}$ is a surface of genus $g-1$ obtained by cutting $c$ open in $S$. Since $g-1\ge 3$, any representation of $\Mod(S\setminus\{c\})$ with dimension between $1$ and $2g-2$ is known by Korkmaz (\Cref{thm:korkmaz2g}), and in particular Dehn twists act unipotently, so $\lambda=1$, a contradiction.

    Thus $\lambda_\sharp < n$. Let $E_{\lambda,gen}^c$ be the generalized $\lambda$-eigenspace. As above, either $E_{\lambda,gen}^c$ or $\C^n/E_{\lambda,gen}^c$ has dimension between $1$ and $2g-2$. If $1 \le \dim E_{\lambda,gen}^c \le 2g-2$, by a similar argument as above, we get $\lambda=1$, a contradiction. Thus $1 \le \dim \C^n/E_{\lambda,gen}^c \le 2g-2$, and $\Mod(S\setminus\{c\})$ acts on this space either trivially or via the symplectic representation; in either case, Dehn twists act unipotently.
    Therefore any nonseparating simple closed curve on $S\setminus\{c\}$ has the same generalized $\lambda$-eigenspace as $c$. In particular, taking two nonseparating simple closed curves $d,d'$ on $S\setminus\{c\}$ with $i(d,d')=1$, then by \Cref{lemma:generalizedinvariant}, $E_{\lambda,gen}^c$ is $\Mod(S)$-invariant, of dimension $<n$, so by induction, $\lambda=1$, concluding the proof.
\end{proof}

This also leads to the following corollary:
\begin{corollary}
    Any homomorphism from $\Mod(S)$ to a compact Lie group of dimension at most $4g-3$ is trivial.
\end{corollary}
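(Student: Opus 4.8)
The plan is to prove the corollary via the adjoint representation, whose dimension is exactly $\dim G$, exploiting that elements of a compact group act semisimply while \Cref{prop:unipotence} forces the images of Dehn twists to act unipotently. So suppose $\rho\colon\Mod(S)\to G$ is a homomorphism to a connected compact Lie group with $\dim G\le 4g-3$ (we take $G$ connected, which is the intended reading, since e.g.\ a congruence quotient $\Mod(S)\twoheadrightarrow\Sp(2g,\Z/3)$ is a nontrivial homomorphism onto a $0$-dimensional compact group; for $S$ closed or once-punctured the corollary reduces to the bounded case since those mapping class groups are quotients of $\Mod(\Sigma_{g,1})$). Assume $g\ge 4$, as in \Cref{prop:unipotence}. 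Complexifying, the adjoint representation gives a homomorphism
\[
\mathrm{Ad}\circ\rho\colon\Mod(S)\to\GL(\mathfrak g_{\C}),\qquad \dim_{\C}\mathfrak g_{\C}=\dim G\le 4g-3 .
\]

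The first step is to apply \Cref{prop:unipotence} to this representation: for every nonseparating simple closed curve $c\subset S$, the transformation $\mathrm{Ad}(\rho(T_c))$ is unipotent. On the other hand, $\rho(T_c)$ lies in the compact group $G$, so $\mathrm{Ad}(\rho(T_c))$ lies in the compact subgroup $\mathrm{Ad}(G)\le\GL(\mathfrak g_{\C})$; since any compact subgroup of $\GL(N,\C)$ preserves a Hermitian inner product and hence is conjugate into $U(N)$, the element $\mathrm{Ad}(\rho(T_c))$ is diagonalizable. A transformation that is simultaneously unipotent and diagonalizable is the identity, so $\mathrm{Ad}(\rho(T_c))=\id$, i.e.\ $\rho(T_c)\in\ker(\mathrm{Ad})=Z(G)$ (here connectedness of $G$ is used to identify the kernel of the adjoint representation with the center).

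To conclude, I would use that $\Mod(S)$ is generated by Dehn twists about nonseparating simple closed curves, so the previous step forces $\rho(\Mod(S))\subseteq Z(G)$; since $Z(G)$ is abelian and $\Mod(S)$ is perfect for $g\ge 3$, this gives $\rho(\Mod(S))=[\rho(\Mod(S)),\rho(\Mod(S))]=\{e\}$, so $\rho$ is trivial. I do not expect a real obstacle: the only points requiring care are the dimension bookkeeping (the complexified adjoint representation has dimension exactly $\dim G$, which is precisely what makes \Cref{prop:unipotence} applicable in the asserted range) and the use of connectedness of $G$ in the identification $\ker(\mathrm{Ad})=Z(G)$; everything else is the standard linear algebra of compact groups together with perfectness of $\Mod(S)$.
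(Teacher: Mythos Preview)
Your proof is correct and follows essentially the same approach as the paper: compose with the adjoint representation, apply \Cref{prop:unipotence} to see that Dehn twists act trivially (being simultaneously unipotent and semisimple in the compact group $\mathrm{Ad}(G)$), and conclude via perfectness of $\Mod(S)$. Your observation that $G$ must tacitly be taken connected is well taken---the paper's proof likewise implicitly uses this when identifying $\ker(\mathrm{Ad})$ with $Z(G)$.
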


\begin{proof}
   Given $f: \Mod(S) \to G$ such a homomorphism, the adjoint representation furnishes a linear representation of $\Mod(S)$ of dimension $\le 4g-3$, which sends Dehn twists to unipotent elements by \Cref{prop:unipotence}. But the only unipotent element in a compact Lie group is the identity matrix \cite{Borel1991}. Thus $\im(f)$ is contained in $Z(G)$, an abelian group, and since $g \ge 3$ by standing assumption, it follows that $\im(f)$ is trivial.
\end{proof}

For the results of the next section, it is also necessary to specify the dimension of the 1-eigenspace. We make use of the following piece of linear algebra that governs the structure of a generalized eigenspace.
\begin{lemma}[Jordan inequalities, cf. Section 2.2 of \cite{TSS}]\label{lemma:jordan}
Let $A \in \End(\C^{n})$ be a linear transformation, and let $\lambda$ be an eigenvalue of $A$. Consider the filtration
\[
0  = E_{\lambda,0} \le E_{\lambda,1} \le E_{\lambda,2} \le \dots \le E_{\lambda,d} = E_{\lambda,gen}
\]
of the generalized eigenspace $E_{\lambda,gen}$. Then the dimensions of the associated graded quotients form a non-increasing sequence:
\[
\dim(E_{\lambda,j}/E_{\lambda,{j-1}}) \ge \dim(E_{\lambda,{j+1}}/E_{\lambda,{j}})
\]
for $1 \le j \le d-1$.
\end{lemma}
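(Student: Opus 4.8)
## Proof Strategy for the Jordan Inequalities

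The statement to be proved is the last one in the excerpt: the Jordan inequalities lemma about the dimensions of the graded quotients of the generalized eigenspace filtration forming a non-increasing sequence.

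Let me think about this.\textbf{Strategy.} The plan is to reduce everything to a single observation: the nilpotent operator $N := A - \lambda I$ carries $E_{\lambda,j+1}$ into $E_{\lambda,j}$, and the induced map on the graded quotients is injective. Comparing dimensions of source and target then yields exactly the asserted inequality.

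\textbf{Step 1: the shift map.} First I would record that for every $j \ge 1$ one has $N\bigl(E_{\lambda,j}\bigr) \subseteq E_{\lambda,j-1}$: indeed if $v \in E_{\lambda,j} = \ker(N^j)$ then $N^{j-1}(Nv) = N^j v = 0$, so $Nv \in E_{\lambda,j-1}$. Applying this with $j+1$ and with $j$ in place of $j$, the operator $N$ restricts to maps $E_{\lambda,j+1} \to E_{\lambda,j}$ and $E_{\lambda,j} \to E_{\lambda,j-1}$, hence descends to a well-defined linear map
\[
\bar{N} \colon E_{\lambda,j+1}/E_{\lambda,j} \longrightarrow E_{\lambda,j}/E_{\lambda,j-1}.
\]

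\textbf{Step 2: injectivity.} Next I would check $\bar{N}$ is injective. Suppose $v \in E_{\lambda,j+1}$ represents a class in the kernel, i.e.\ $Nv \in E_{\lambda,j-1}$. Then $N^{j-1}(Nv) = 0$, i.e.\ $N^j v = 0$, so $v \in E_{\lambda,j}$ and its class vanishes. Hence $\ker \bar{N} = 0$.

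\textbf{Step 3: conclude.} Injectivity of $\bar{N}$ gives $\dim\bigl(E_{\lambda,j+1}/E_{\lambda,j}\bigr) \le \dim\bigl(E_{\lambda,j}/E_{\lambda,j-1}\bigr)$ for each $1 \le j \le d-1$, which is the claim. There is no real obstacle here — this is entirely formal linear algebra; the only thing to be slightly careful about is getting the index bookkeeping right so that $\bar N$ is defined precisely on the range $1 \le j \le d-1$ (beyond which the quotients are zero and the inequality is vacuous). One could alternatively phrase the whole argument in terms of the dual Young diagram / partition of $\dim E_{\lambda,gen}$ attached to the nilpotent $N|_{E_{\lambda,gen}}$, where the statement becomes the tautology that a partition written in weakly decreasing order is weakly decreasing; but the explicit map $\bar N$ above is the cleanest self-contained route.
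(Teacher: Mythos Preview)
Your proof is correct. The paper does not actually give a proof of this lemma; it states it with a citation (``cf.\ Section 2.2 of \cite{TSS}'') and uses it as a black box. Your argument via the induced map $\bar N$ on graded pieces is the standard and cleanest way to establish it, so there is nothing to compare against.
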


\begin{proposition}\label{prop:codimenison}
   Let $\rho: \Mod_{g,1} \to \GL(2g+m, \C)$ be a representation with $0<m\le g-3$. Let $c \subset S$ be a nonseparating simple closed curve. Suppose that for $h = g$ or $g-1$, every nontrivial representation $\rho': \Mod_{h,1} \to \GL(2h+(m-1), \C)$ is bi-affine with core $H$. 
   Then the $1$-eigenspace $E_1^c$ of $\rho(T_c)$ has codimension at most 1. 
\end{proposition}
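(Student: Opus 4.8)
\medskip\noindent\textbf{Plan.}
We already know from \Cref{prop:unipotence} (applicable since $g \ge 4$ and $2g+m \le 3g-3 < 4g-3$) that $\rho(T_c)$ is unipotent, so $E_1^c = E_{1,gen}^c$ is the whole space and the only eigenvalue is $1$. The claim is that the genuine eigenspace $E_1^c = \ker(\rho(T_c) - I)$ has codimension at most $1$, i.e.\ that the nilpotent $N = \rho(T_c) - I$ has rank at most $1$. The strategy is to bound the rank of $N$ using the action of a large subgroup commuting with $T_c$, together with the Jordan inequalities and Korkmaz's low-dimensional triviality theorems.

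\medskip\noindent\textbf{Key steps.}
First, pick a nonseparating curve $c$ and let $R = S \setminus c$, a surface of genus $g-1 \ge 3$ with (one or two) boundary components; $\Mod(R)$ commutes with $T_c$, hence preserves every generalized eigenspace of $\rho(T_c)$, in particular the filtration $0 = E_{1,0}^c \le E_{1,1}^c \le \cdots \le E_{1,d}^c = V$. Each graded piece $E_{1,j}^c/E_{1,j-1}^c$ is thus a $\Mod(R)$-module. Second, I claim $N = \rho(T_c) - I$ maps $V$ into $E_1^c$ and, more relevantly, induces for each $j$ an injection $E_{1,j}^c/E_{1,j-1}^c \hookrightarrow E_{1,j-1}^c/E_{1,j-2}^c$ of $\Mod(R)$-modules; this is the content of the Jordan inequalities \Cref{lemma:jordan} refined to the equivariant setting (the map is just $N$, which is $\Mod(R)$-equivariant since $\Mod(R)$ commutes with $T_c$). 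Consequently the bottom graded piece $E_1^c = E_{1,1}^c$ contains $\Mod(R)$-submodules isomorphic to all the higher graded pieces, and $\mathrm{codim}(E_1^c) = \sum_{j \ge 2} \dim(E_{1,j}^c/E_{1,j-1}^c) \le (d-1)\dim(E_1^c)$ — but more importantly, $\dim(V/E_1^c) \le \dim(E_1^c)$ with equality of the "layer dimensions" forced to be non-increasing.

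Now suppose for contradiction $\mathrm{codim}(E_1^c) \ge 2$. Then $V/E_1^c$ is a nontrivial $\Mod(R)$-module (nontrivial because if $\Mod(R)$ acted trivially on it and on $E_1^c$, one could try to run \Cref{lemma:flagtriv}; more carefully, I will show a nontrivial $\Mod(R)$-action must appear in some small quotient). Since $\dim(V) \le 3g-3 = 2(g-1) + (g-1)$, the graded pieces have dimension at most $3g-3$, and in particular $\dim(V/E_1^c) \le \lfloor (3g-3)/2 \rfloor < 2(g-1)-1 = 2g-3$ when... — here I need to be careful: $\dim(V/E_1^c) = \mathrm{codim}(E_1^c)$, and since the layer dimensions are non-increasing and sum to $\le 3g-3$ with at least two layers, the top layer $E_{1,d}^c/E_{1,d-1}^c$ has dimension at most $(3g-3)/2 < 2g-4 \le 2(g-1)-1$ for $g \ge ?$. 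Then by Korkmaz's \Cref{thm:korkmaz2g-1}, each such small $\Mod(R)$-layer quotient is trivial; feeding this back, $\Mod(R)$ acts unipotently on all of $V$ via a bi-affine-type structure, which lets me instead analyze $\rho(T_d)$ for a curve $d \subset R$ with $i(c,d)=1$. The finishing move is \Cref{lemma:generalizedinvariant}: choosing $d$ appropriately with $E_{1,k}^c = E_{1,k}^d$ (forced by the triviality of small $\Mod(R)$-quotients, which pins down the eigenspace filtration of $T_d$ on those quotients to agree with that of $T_c$), we conclude $E_{1,k}^c$ is $\Mod(S)$-invariant, and then apply induction on $n$ (base case Korkmaz, \Cref{thm:korkmaz2g}) to the proper submodule and quotient to force $\mathrm{codim}(E_1^c) \le 1$ after all.

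\medskip\noindent\textbf{Main obstacle.}
The delicate point is the bookkeeping that transfers triviality of the \emph{small} $\Mod(R)$-graded-quotients into control of the full eigenspace structure of a \emph{second} Dehn twist $T_d$, so that \Cref{lemma:generalizedinvariant}'s hypothesis $E_{1,k}^c = E_{1,k}^d$ can be verified. Essentially I must show that once $\Mod(R)$ acts through a bi-affine (indeed nearly trivial) representation on each layer, the curve $d \subset R$ with $i(c,d) = 1$ has its generalized $1$-eigenspace filtration forced to coincide layer-by-layer with that of $c$; this requires knowing that the "non-unipotent part" of $T_d$, measured against the $\Mod(R)$-structure, lives in the same places as that of $T_c$. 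This is the analogue of the argument in the proof of \Cref{prop:unipotence}, but tracking the finer Jordan data rather than just eigenvalues, and it is where the dimension bound $m \le g-3$ (rather than $m \le g-1$) will be used — it guarantees all the relevant layers stay in the Korkmaz range $\le 2(g-1)-1$ for $\Mod(R)$.
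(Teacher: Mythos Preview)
Your opening moves are correct and match the paper: unipotence from \Cref{prop:unipotence}, the $\Mod(S\setminus c)$-invariant Jordan flag, and the bound $\dim(E_1^c)\ge 2g-2$ via the flag triviality criterion (if $\dim(E_1^c)\le 2g-3$ then every graded piece is a trivial $\Mod(S\setminus c)$-module by \Cref{thm:korkmaz2g-1}, forcing $\rho(T_c)=I$). From there, however, your argument diverges from the paper's and does not close.

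The essential point you are missing is that the paper does \emph{not} run an induction on the codimension statement alone. It runs an interleaved induction whose hypothesis is that every representation of $\Mod_{g',1}$ of dimension $2g'+m'$ with $m'<m$ is \emph{bi-affine}; \Cref{prop:codimenison} is one step in that joint induction, completed only together with \Cref{prop:kgtriv} and \Cref{prop:whenDCS}. This stronger hypothesis is what actually finishes the argument: once a relevant $\Mod(S\setminus c)$- or $\Mod(R)$-module is known to be bi-affine, \Cref{lemma:againDCS} upgrades this to $V$, and then \Cref{lemma:DCScodim1} gives $\codim(E_1^d)=1$ for a nonseparating $d$ in that subsurface; conjugacy in $\Mod(S)$ transfers this to $c$. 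Your weaker inductive hypothesis (``codimension $\le 1$ in lower dimension'') does not yield this conclusion: knowing $T_d$ has rank-one defect on the invariant subspace $E_1^c$ and acts trivially on $V/E_1^c$ only bounds $\rank(\rho(T_d)-I)$ by $1+\codim(E_1^c)$, which is no contradiction.

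Your ``finishing move'' is also where the plan breaks. You want to verify the hypothesis $E_{1,k}^c=E_{1,k}^d$ of \Cref{lemma:generalizedinvariant} for some $d$ with $i(c,d)=1$, using triviality of the small $\Mod(S\setminus c)$-layers. But that triviality only constrains twists about curves \emph{disjoint} from $c$; it says nothing about $T_d$ when $d$ meets $c$. The paper sidesteps this entirely by a dichotomy: either $E_1^b=E_1^c$ for a chosen $b$ with $i(b,c)=1$, in which case \Cref{lemma:generalizedinvariant} gives $\Mod(S)$-invariance and one inducts (bi-affinely) on the $\Mod(S)$-module $E_1^c$; or $E_1^b\ne E_1^c$, in which case $E_1^b\cap E_1^c$ has dimension $\le 2(g-1)+(m-1)$, is invariant under $\Mod(R)$ for $R=S\setminus(b\cup c)$ of genus $g-1$, and one inducts (bi-affinely) there. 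In both branches the bi-affine structure, not a bare codimension bound, is what drives the contradiction.
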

\begin{proof}
    By \Cref{prop:unipotence}, $T_c$ acts unipotently on $\C^{2g+m}$. We consider the flag of generalized eigenspaces
    \[
    0 \le E_{1,1}^c \le E_{1,2}^c \le \dots \le E_{1,d}^c = \C^{2g+m};
    \]
    in this notation, $E_{1,1}^c = E_1^c$ is the genuine eigenspace.
    This is invariant under the action of $\Mod(S\setminus\{c\})$. 
    By the Jordan inequalities (\Cref{lemma:jordan}), the sequence of dimensions $\dim(E_{1,j}^c/E_{1,j+1}^c)$ is nonincreasing, so that $\dim(E_{1}^c)$ is an upper bound on the dimension of any such quotient. By the flag triviality criterion (\Cref{lemma:flagtriv}), it follows that $\dim(E_1^c) \ge 2g-2$.

    Let $S'_c \subset S \setminus \{c\}$ be a subsurface homeomorphic to $\Sigma_{g-1,1}$. If $\dim(E_1^c) \le 2(g-1) + (m-1)$, then by hypothesis, the restriction of $\rho(\Mod(S'_c))$ to $E_1^c$ must be bi-affine or else trivial. By \Cref{lemma:againDCS}, it follows that $\rho(\Mod(S'_c))$ itself is either bi-affine or trivial, and then the claim follows from \Cref{lemma:DCScodim1}.

    Otherwise $\codim(E_1^c)\le 2$, so it remains only to rule out the case $\codim(E_1^c) = 2$. Let $b \subset S \setminus S_c'$ be a simple closed curve $b$ with $i(b,c) = 1$. Then by \Cref{lemma:generalizedinvariant}, either $E_1^c$ is invariant under $\rho(\Mod(S))$, or else $E_1^b \cap E_1^c$ is a $\Mod(S'_c)$-invariant subspace of dimension at most $2(g-1) + (m-1)$. In the former case, by hypothesis, the action of $\Mod(S)$ on $E_1^c$ is either bi-affine or trivial, and we conclude as in the preceding paragraph. In the latter case, we again conclude by hypothesis that the representation of $\Mod(S'_c)$ on $E_1^b \cap E_1^c$ is bi-affine or trivial, and the argument finishes along the same lines once again.
\end{proof}

\section{Transvective representations}\label{section:transvective}

The work of the previous section shows that in a certain range, Dehn twists act unipotently, and with genuine eigenspace of codimension $1$. Here we study representations of this type (which we call {\em transvective representations}) in greater detail. The culminating result is \Cref{prop:kgtriv}, which shows that transvective representations necessarily annihilate the Johnson kernel $\cK(S)$. 

\begin{definition}[Transvective representation]\label{def:transvective}
    A representation $\rho: \Mod(S) \to \GL(V)$ is {\em transvective} if for all nonseparating simple closed curves $c \subset S$, $\rho(T_c)$ is unipotent and $\codim(E_1^c) = 1$.
\end{definition}

It turns out that transvective representations have an extremely rigid structure on Dehn twists. In \Cref{lemma:transvection}-\ref{lemma:normalize}, we establish this theory.

\begin{lemma}\label{lemma:transvection}
    Let $\rho: \Mod(S) \to \GL(V)$ be a transvective representation. Then for any nonseparating simple closed curve $c \subset S$, there is $\alpha_c \in V^*$ and $v_c \in V$ for which
    \[
    \rho(T_c)(x) = x + \alpha_c(x)v_c
    \]
    for all $x \in V$, and $\alpha_c(v_c) = 0$
\end{lemma}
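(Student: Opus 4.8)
The plan is to extract the rank-one structure of $\rho(T_c)$ directly from the transvective hypothesis. By definition, $\rho(T_c)$ is unipotent and its $1$-eigenspace $E_1^c$ has codimension one in $V$. Set $N = \rho(T_c) - I$. Then $N$ is nilpotent with $\ker N = E_1^c$ of codimension one, so $\dim \operatorname{im} N = 1$; pick $v_c$ spanning $\operatorname{im} N$. Since $N$ is nilpotent, $v_c \in \operatorname{im} N \subseteq$ (the span of generalized eigenvectors with eigenvalue $0$), and more precisely $N v_c \in \operatorname{im} N^2$. But $\operatorname{im} N^2 \subseteq \operatorname{im} N$ is either $0$ or all of the line $\langle v_c\rangle$; the latter would force $N$ to be an isomorphism onto that line and hence not nilpotent on it, contradiction. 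So $N^2 = 0$, i.e. $N v_c = 0$, which says exactly $v_c \in E_1^c = \ker N$.

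Next, because $\operatorname{im} N$ is the line $\langle v_c\rangle$, for every $x \in V$ we may write $N x = \alpha_c(x) v_c$ for a unique scalar $\alpha_c(x)$; linearity of $N$ makes $\alpha_c \colon V \to \C$ linear, i.e. $\alpha_c \in V^*$. This gives $\rho(T_c)(x) = x + \alpha_c(x) v_c$ for all $x$. Finally, $\ker \alpha_c = \ker N = E_1^c \ni v_c$, so $\alpha_c(v_c) = 0$, which is the last assertion (and is also equivalent to $N^2 = 0$, consistent with the previous paragraph).

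There is no real obstacle here: the statement is essentially the linear-algebra fact that a nilpotent operator whose kernel has codimension one is a rank-one square-zero operator, repackaged. The one point requiring a word of care is the claim $N^2 = 0$ — that the single Jordan block structure forced by $\operatorname{rank} N = 1$ together with nilpotence cannot have size $\ge 3$; equivalently, a nilpotent matrix of rank $1$ has all Jordan blocks of size $\le 2$ and exactly one of size $2$, so its square vanishes. I would just note this directly (as above) rather than invoke \Cref{lemma:jordan}, though that lemma also yields it immediately: the graded pieces of the generalized $1$-eigenspace of $\rho(T_c)$ have dimensions $(n-1, 1, 0, \dots)$ when $\dim E_1^c = n-1$, forcing $E_{1,2}^c = V$ and hence $(\rho(T_c)-I)^2 = 0$. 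No normalization of $v_c$ or $\alpha_c$ is needed at this stage — the pair $(\alpha_c, v_c)$ is only determined up to rescaling $v_c \mapsto t v_c$, $\alpha_c \mapsto t^{-1}\alpha_c$, which is harmless for the statement as written and will presumably be pinned down in the subsequent lemmas on normalization.
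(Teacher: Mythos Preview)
Your proof is correct and follows essentially the same approach as the paper: both observe that $N = \rho(T_c) - I$ has rank one (since $\ker N = E_1^c$ has codimension one), write $N = \alpha_c \otimes v_c$, and then use nilpotence of $N$ to force $\alpha_c(v_c) = 0$. The paper's version is terser (it simply notes that a rank-one nilpotent operator must satisfy $\alpha_c(v_c) = 0$), while you spell out the equivalent fact $N^2 = 0$ more carefully, but the content is the same.
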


\begin{proof}
    Since $\codim(E_1^c) = 1$, there is $\alpha_c \in V^*$ and $v_c \in V$ for which 
    \[
    \rho(T_c) - I = \alpha_c v_c.
    \]
    Since $
    \rho(T_c)$ is unipotent, $\rho(T_c) - I$ is nilpotent, and so necessarily $\alpha_c(v_c) = 0$.
\end{proof}

\begin{lemma}\label{lemma:LI}
    Let $\rho: \Mod(S) \to \GL(V)$ be a transvective representation. Let $a, b \subset S$ be nonseparating simple closed curves. Suppose that either $i(a,b) = 1$, or else that $i(a,b) = 0$ but that $a$ and $b$ are not homologous. Let $\alpha_a, \alpha_b, v_a, v_b$ be as in \Cref{lemma:transvection}. Then $\alpha_a$ and $\alpha_b$ are linearly independent, and the same is true of $v_a$ and $v_b$.
\end{lemma}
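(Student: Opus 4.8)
The plan is to exploit the fact that in a transvective representation the image of $\rho(T_c)-I$ is the line $\C v_c$ and its kernel is the hyperplane $\ker(\alpha_c)$, and to translate the relevant topological relations among $a$ and $b$ into linear-algebraic constraints. First I would treat the case $i(a,b)=1$. Here there is a braid (chain) relation $T_aT_bT_a = T_bT_aT_b$ in $\Mod(S)$; equivalently, since $T_a$ and $T_b$ are conjugate and their commutator is again realized by Dehn twists, one knows that $[T_a,T_b] = T_aT_bT_a^{-1}T_b^{-1}$ is nontrivial. Concretely, the key input is that $T_a(b)$ is a curve with $i(T_a(b),a)=1$ and $[T_a(b)] = [b] \pm [a] \ne [b]$, so $T_{T_a(b)} = T_aT_bT_a^{-1}$ is a Dehn twist \emph{distinct} from $T_b$; since distinct Dehn twists are not equal as mapping classes, $\rho(T_aT_bT_a^{-1}) \ne \rho(T_b)$ unless $\rho$ is degenerate in a way we can rule out.

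The cleanest route, and the one I would actually carry out: suppose for contradiction that $v_a$ and $v_b$ are proportional, say $v_b = \mu v_a$ with $\mu \in \C^\times$ (they are both nonzero since $\rho$ is nontrivial, as $T_a$ and $T_b$ act nontrivially — this follows from unipotence plus codimension-one $1$-eigenspace). Then $\rho(T_a)$ and $\rho(T_b)$ both fix every vector in the \emph{common} hyperplane $\ker(\alpha_a)\cap\ker(\alpha_b)$ and act on the quotient $V/(\C v_a)$ trivially as well; more to the point, one computes $\rho(T_a)\rho(T_b)(x) = x + (\alpha_b(x)\mu + \alpha_a(x) + \alpha_a(v_b)\alpha_b(x))v_a = x + (\alpha_a + \mu\alpha_b + \mu\alpha_a(v_a)\alpha_b)(x) v_a$. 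Using $\alpha_a(v_a)=0$, the product $\rho(T_aT_b)$ is again a transvection-type map $x \mapsto x + (\alpha_a+\mu\alpha_b)(x)v_a$ with image in $\C v_a$. Feeding the braid relation $\rho(T_aT_bT_a) = \rho(T_bT_aT_b)$ through this computation forces a contradiction with $i(a,b)=1$: both sides have image in $\C v_a$, and expanding both triple products in terms of $\alpha_a,\alpha_b$ and the pairings $\alpha_a(v_b), \alpha_b(v_a)$ gives an equation among the functionals that is inconsistent once one notes (from the geometric setup, or from comparing with the known representations $H, \tilde H^*$ via \Cref{cor:biafftriv}) that $\alpha_a(v_b)\ne 0$ when $i(a,b)=1$. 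An alternative and perhaps safer finish is to invoke \Cref{cor:biafftriv}: $V \cong W \oplus \C^k$ with $W \in \{H, \tilde H, \tilde H^*\}$, and in each of these models one reads off directly from $\Psi(T_c)(x)=x+\pair{x,c}[c]$ (and its unit-tangent analogue, via \cite[Theorem 4.2]{kasahara}) that $v_c$ corresponds to $[c]\in H$ and $\alpha_c$ to the pairing $\pair{\cdot,c}$; then $v_a \parallel v_b \iff [a]\parallel[b]$, which is false when $i(a,b)=1$ and also false when $i(a,b)=0$ but $a,b$ non-homologous — this handles both cases at once.

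For the $\alpha$-statement, the argument is dual: $\alpha_a, \alpha_b$ linearly dependent would mean $\ker(\alpha_a)=\ker(\alpha_b)$, i.e.\ $T_a$ and $T_b$ have the same (codimension-one) fixed hyperplane; again passing to the model $V \cong W \oplus \C^k$, the hyperplane $\ker(\alpha_c)$ is $[c]^\perp \oplus \C^k$ (using nondegeneracy of the symplectic form on $H$), and $[a]^\perp = [b]^\perp$ forces $[a]\parallel[b]$, contradicting the hypothesis in both cases.

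The main obstacle is making sure the degenerate possibilities — that $v_c = 0$ or $\alpha_c = 0$, i.e.\ that $\rho(T_c)=I$ — are genuinely excluded, and more importantly that the reduction to the three model representations via \Cref{cor:biafftriv} is legitimate here, since at this stage in the paper we have \emph{not} yet shown an arbitrary transvective representation is bi-affine. So the honest proof should avoid citing \Cref{cor:biafftriv} and instead argue directly: I would use the braid relation (for $i(a,b)=1$) and the fact that $T_a, T_b$ generate a copy of the braid group $B_3$ to derive the independence of $v_a,v_b$ and of $\alpha_a,\alpha_b$ purely formally, and for the disjoint non-homologous case use that there is a curve $c$ with $i(a,c)=1=i(b,c)$ together with the relation $T_aT_b = T_bT_a$ (they commute) — two commuting transvections $x\mapsto x+\alpha_a(x)v_a$, $x\mapsto x+\alpha_b(x)v_b$ with $v_a\parallel v_b$ or $\alpha_a\parallel\alpha_b$ can be analyzed by hand, and one derives the needed conclusion by playing the $a$–$c$ and $b$–$c$ braid relations against the commutation. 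Getting the bookkeeping of these functional identities right, and choosing the auxiliary curve $c$ correctly in the disjoint case, is where the real work lies.
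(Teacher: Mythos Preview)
You are right that invoking \Cref{cor:biafftriv} here would be circular: \Cref{lemma:LI} feeds into \Cref{lemma:braid}, \Cref{lemma:normalize}, and ultimately \Cref{prop:kgtriv}, which is part of the induction establishing bi-affineness in the first place. So only your ``honest'' route is available.

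That route has a concrete gap. Under the degeneracy hypothesis $v_b = \mu v_a$, one has $\alpha_a(v_b) = \mu\,\alpha_a(v_a) = 0$ by \Cref{lemma:transvection}, and likewise $\alpha_b(v_a)=0$. So the very quantity you want to use to extract a contradiction from the braid relation vanishes. What the braid relation actually gives in this situation is that $\rho(T_a)$ and $\rho(T_b)$ are \emph{commuting} transvections along the common line $\C v_a$, and then $ABA=BAB$ with $AB=BA$ forces $\rho(T_a)=\rho(T_b)$. But that is not yet a contradiction: nothing you have written excludes a transvective representation in which two distinct Dehn twists act identically. Your disjoint case has the same issue---commutation of $\rho(T_a)$ and $\rho(T_b)$ imposes no constraint once $v_a\parallel v_b$, so the auxiliary curve $c$ does not obviously help. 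What is missing is a mechanism to pass from this local coincidence to a \emph{global} one, forcing $\rho$ to be trivial.

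The paper supplies exactly that mechanism, and in fact bypasses the braid relation entirely. If $\C v_a = \C v_b$ (or $\C\alpha_a = \C\alpha_b$), then for any curve $c$ standing in the same topological relation to $a$ as $b$ does, the change-of-coordinates principle yields $f\in\Mod(S)$ with $f(a)=a$ and $f(b)=c$; since $\rho(f)$ commutes with $\rho(T_a)$ it preserves $\C v_a$, while it carries $\C v_b$ to $\C v_c$, so $\C v_a = \C v_c$. Connectivity of the relevant curve graph (edges for $i=1$, or for $i=0$ non-homologous) then propagates the coincidence to all nonseparating curves. This produces a codimension-one $\Mod(S)$-invariant hyperplane on which $\rho$ is trivial, and \Cref{lemma:flagtriv} forces $\rho$ itself to be trivial, contradicting transvectivity. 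Both topological cases are handled uniformly, with no $B_3$-computation and no auxiliary curve.
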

\begin{proof}
     This is a variant of the method of proof of \Cref{lemma:generalizedinvariant}; we recall the argument. Suppose for contradiction that $\alpha_a, \alpha_b$ or $v_a, v_b$ are linearly dependent. For convenience, we will notate the span of $\alpha_a$ by $\C\alpha_a$, and similarly for other objects. Let $c \subset S$ be an additional simple closed curve satisfying the same topological constraints as $b$ (i.e. in the first case, $i(a,c) = 1$ and similarly in the second case). By the change-of-coordinates principle, there is $f \in \Mod(S)$ such that $f(a) = a$ and $f(b) = c$. Then $\rho(f)$ commutes with $\rho(T_a)$ and so preserves $\C\alpha_a$ and $\C v_a$. On the other hand, $\rho(f)$ conjugates $\rho(T_b)$ to $\rho(T_c)$, and so takes $\C\alpha_b$ to  $\C\alpha_c$, and $\C v_b$ to $\C v_c$. If $\C \alpha_a = \C \alpha_b$, this shows that likewise $\C \alpha_a = \C \alpha_c$, and similarly for $\C v_a, \C v_b, \C v_c$.

     Let $\cC$ be the graph with vertices in correspondence with isotopy classes of nonseparating simple closed curves on $S$, and with edges connecting $a,b$ if $i(a,b) = 1$; let $\cC'$ be the graph on the same vertex set, with $a$ joined to $b$ if $i(a,b) = 0$ and $a, b$ are non-homologous. Both $\cC$ and $\cC'$ are connected for $g \ge 3$ (see \cite[Chapter 4]{FM}). Thus if $\C \alpha_a = \C \alpha_b$ for some $a,b$ adjacent in $\cC$ or $\cC'$, and if $c$ is an arbitrary vertex of $\cC^{(')}$, there is a path $c_0 = a, c_1, \dots, c_k = c$ in $\cC^{(')}$. The argument of the above paragraph shows that $\C \alpha_a = \C \alpha_{c_1}$, and successively $\C \alpha_{c_i} = \C \alpha_{c_{i+1}}$, ultimately showing $\C \alpha_a = \C \alpha_c$. The same argument of course works with vectors in place of covectors. 

     We show that in either situation, this leads to a contradiction. If $\C \alpha_a = \C \alpha_b$ for all pairs of nonseparating simple closed curves on $S$, then this shows that the codimension-$1$ subspace $E_1^a = \ker(\alpha_a)$ is $\rho$-invariant and trivial; by the flag triviality criterion (\Cref{lemma:flagtriv}), $\rho$ itself is trivial. If $\C v_a = \C v_b$ for all pairs of curves $a,b$, we reduce to the previous situation by considering the dual representation. 
\end{proof}

\begin{lemma}\label{lemma:braid}
    Let $\rho: \Mod(S) \to \GL(V)$ be a transvective representation. Let $a, b \subset S$ be nonseparating simple closed curves satisfying $i(a,b) = 1$. Let $\alpha_a, \alpha_b, v_a, v_b$ be as in \Cref{lemma:transvection}. Then
    \[
    \alpha_a(v_b)\alpha_b(v_a) = -1.
    \]
\end{lemma}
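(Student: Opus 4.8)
The plan is to exploit the braid relation $T_aT_bT_a = T_bT_aT_b$ among Dehn twists about curves meeting once, and feed it through the transvection formula of \Cref{lemma:transvection}. Write $A = \rho(T_a)$ and $B = \rho(T_b)$, so that $A = I + \alpha_a \otimes v_a$ and $B = I + \alpha_b \otimes v_b$ in the notation of \Cref{lemma:transvection} (i.e. $A(x) = x + \alpha_a(x) v_a$). Set $p = \alpha_a(v_b)$ and $q = \alpha_b(v_a)$; the goal is to show $pq = -1$. First I would compute the product $ABA$ and $BAB$ applied to a general vector $x \in V$, expanding everything in terms of the scalars $\alpha_a(x)$, $\alpha_b(x)$, $p$, $q$ and the vectors $v_a$, $v_b$, using repeatedly that $\alpha_a(v_a) = \alpha_b(v_b) = 0$. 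This is a short bilinear computation: $ABA(x)$ will be $x$ plus an explicit linear combination of $v_a$ and $v_b$ with coefficients that are polynomials in $p,q,\alpha_a(x),\alpha_b(x)$, and similarly for $BAB(x)$.

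Next I would set $ABA(x) = BAB(x)$ for all $x$ and extract the identity. Here is where I use \Cref{lemma:LI}: since $i(a,b) = 1$, the vectors $v_a$ and $v_b$ are linearly independent, so I may equate the $v_a$-coefficients on both sides and separately the $v_b$-coefficients. Each of these is an identity of bilinear forms in $(\alpha_a(x), \alpha_b(x))$; since $\alpha_a$ and $\alpha_b$ are also linearly independent (again \Cref{lemma:LI}), the pair $(\alpha_a(x), \alpha_b(x))$ ranges over all of $\C^2$ as $x$ varies, so I may equate coefficients of the monomials $\alpha_a(x)$, $\alpha_b(x)$, and any quadratic terms that survive. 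The resulting scalar equations should collapse — after cancellation — to the single relation $pq = -1$ (possibly appearing as $(1+pq)\cdot(\text{something nonzero}) = 0$, where the something is a scalar like $\alpha_a(x)$ or $\alpha_b(x)$ whose coefficient must vanish identically).

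The main obstacle I anticipate is purely bookkeeping: organizing the expansion of $ABA(x) - BAB(x)$ so that the combinatorics of the cross-terms $\alpha_a(v_b)$ and $\alpha_b(v_a)$ don't get scrambled, and making sure I've used $\alpha_a(v_a) = 0 = \alpha_b(v_b)$ everywhere it applies. There is also a small subtlety about whether the braid relation alone suffices or whether one needs to be slightly careful that both $T_a$ and $T_b$ are genuinely transvections with the {\em same} structure (which is guaranteed by transvectivity, since both $a$ and $b$ are nonseparating). I would double check the sign by testing the formula on the known case $\rho = \Psi$: there $v_c = [c]$, $\alpha_c(x) = \pair{x, c}$ (up to orientation conventions), so $\alpha_a(v_b)\alpha_b(v_a) = \pair{v_b, a}\pair{v_a, b} = \pair{b,a}\pair{a,b} = -\pair{a,b}^2 = -1$ when $i(a,b) = 1$, confirming the claimed value.
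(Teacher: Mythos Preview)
Your proposal is correct and follows essentially the same approach as the paper: expand both sides of the braid relation via the transvection formula, invoke \Cref{lemma:LI} to compare coefficients on $v_a$ (and $v_b$), and observe that the resulting identity reduces to $(1+pq)\alpha_a(x) = 0$, whence $pq = -1$ since $\alpha_a \ne 0$. The paper carries out exactly this computation, needing only that $\alpha_a$ is not identically zero rather than the full linear independence of $\alpha_a,\alpha_b$, but this is a cosmetic difference.
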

\begin{proof}
    Since $i(a,b) = 1$, the twists $T_a, T_b$ satisfy the braid relation: $T_aT_bT_a = T_b T_a T_b$. A direct computation with the formula of \Cref{lemma:transvection} shows that
    \[
    \rho(T_aT_bT_a)(x) = x + (2 \alpha_a(x) + \alpha_b(x) \alpha_a(v_b) + \alpha_a(x) \alpha_b(v_a)\alpha_a(v_b)) v_a + (\alpha_b(x) + \alpha_a(x)\alpha_b(v_a))v_b
    \]
    and, by reversing the roles of $a$ and $b$,
    \[
    \rho(T_bT_aT_b)(x) = x + (2 \alpha_b(x) + \alpha_a(x) \alpha_b(v_a) + \alpha_b(x) \alpha_a(v_b)\alpha_b(v_a)) v_b + (\alpha_a(x) + \alpha_b(x)\alpha_a(v_b))v_a. 
    \]
    By \Cref{lemma:LI}, $v_a$ and $v_b$ are linearly independent. Comparing coefficients on $v_a$ (and using the fact that $\alpha_a$ is not identically zero, also by \Cref{lemma:LI}), we obtain the equation
    \[
    1 + \alpha_a(v_b)\alpha_b(v_a) = 0
    \]
    as desired.
\end{proof}

\begin{lemma}\label{lemma:commute}
    Let $\rho: \Mod(S) \to \GL(V)$ be a transvective representation. Let $a, c \subset S$ be nonseparating simple closed curves that satisfy $i(a,c) = 0$ but are not homologous. Let $\alpha_a, \alpha_c, v_a, v_c$ be as in \Cref{lemma:transvection}. Then
    \[
    \alpha_a(v_c) = \alpha_c(v_a) = 0.
    \]
\end{lemma}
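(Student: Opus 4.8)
The plan is to exploit the fact that when $i(a,c)=0$, the Dehn twists $T_a$ and $T_c$ commute in $\Mod(S)$, hence $\rho(T_a)$ and $\rho(T_c)$ commute in $\GL(V)$. Writing $\rho(T_a) - I = \alpha_a v_a$ and $\rho(T_c) - I = \alpha_c v_c$ as in \Cref{lemma:transvection}, I would compute the commutator $[\rho(T_a),\rho(T_c)]$ directly. A short calculation gives, for $x \in V$,
\[
\rho(T_a)\rho(T_c)(x) = x + \alpha_c(x)v_c + \alpha_a(x)v_a + \alpha_c(x)\alpha_a(v_c)v_a,
\]
and symmetrically
\[
\rho(T_c)\rho(T_a)(x) = x + \alpha_a(x)v_a + \alpha_c(x)v_c + \alpha_a(x)\alpha_c(v_a)v_c.
\]
Since these must agree for all $x$, we obtain
\[
\alpha_c(x)\alpha_a(v_c)v_a = \alpha_a(x)\alpha_c(v_a)v_c \qquad \text{for all } x \in V.
\]

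Now I would invoke \Cref{lemma:LI}: since $i(a,c) = 0$ and $a, c$ are not homologous, the vectors $v_a$ and $v_c$ are linearly independent (and likewise $\alpha_a, \alpha_c$ are linearly independent, so in particular neither is identically zero). From the linear independence of $v_a$ and $v_c$, the displayed identity forces $\alpha_c(x)\alpha_a(v_c) = 0$ and $\alpha_a(x)\alpha_c(v_a) = 0$ for all $x$. Since $\alpha_c$ and $\alpha_a$ are not identically zero, we may choose $x$ with $\alpha_c(x) \ne 0$ to conclude $\alpha_a(v_c) = 0$, and similarly $\alpha_c(v_a) = 0$, which is exactly the claim.

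I do not anticipate a serious obstacle here — the argument is essentially a two-line matrix computation followed by an appeal to \Cref{lemma:LI}, which has already been established. The only point requiring a modicum of care is making sure the commuting relation is applied to the correct curves: one needs both that $T_a$ and $T_c$ commute (which is immediate from $i(a,c)=0$) and that the non-homologous hypothesis is genuinely used, which it is, precisely to guarantee via \Cref{lemma:LI} that $v_a, v_c$ are linearly independent and that $\alpha_a,\alpha_c$ are nonzero — without this, the conclusion can fail (e.g. for $a$ and $c$ isotopic, $v_a$ and $v_c$ are proportional and the identity above carries no information).
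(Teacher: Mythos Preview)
Your proof is correct and follows essentially the same approach as the paper: compute $\rho(T_a T_c)$ and $\rho(T_c T_a)$ explicitly using the transvection formula, equate them since $T_a$ and $T_c$ commute, and then invoke \Cref{lemma:LI} to extract the vanishing of $\alpha_a(v_c)$ and $\alpha_c(v_a)$ from the linear independence of $v_a, v_c$.
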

\begin{proof}
    We compute
    \[
    \rho(T_c T_a) = x + \alpha_a(x) v_a + (\alpha_c(x) + \alpha_a(x) \alpha_c(v_a))v_c
    \]
    and
    \[
    \rho(T_a T_c) = x + \alpha_c(x) v_c + (\alpha_a(x) + \alpha_c(x) \alpha_a(v_c))v_a.
    \]
    By \Cref{lemma:LI}, $v_a,v_c$ are linearly independent. Comparing coefficients on $v_a$ and $v_c$ then yields the desired identities.
\end{proof}

Recall that a {\em chain} $a_1, \dots, a_k$ of simple closed curves in $S$ is a set of curves for which $i(a_i,a_{i+1}) =1$ for $i =1, \dots, k-1$ and for which $i(a_i,a_j) = 0$ for $\abs{i-j} > 1$. We say that a chain is {\em standardly embedded} if the classes $[a_1], \dots, [a_k]$ are linearly independent in $H_1(S;\Z)$. 

\begin{lemma}\label{lemma:normalize}
    Let $\rho: \Mod(S) \to \GL(V)$ be a transvective representation. Let $a_1, \dots, a_{2k}$ be a standardly-embedded chain; let $v_{a_1}, \dots, v_{a_{2k}}$ be the vectors associated to $\rho(T_{a_1}), \dots, \rho(T_{a_{2k}})$ as in \Cref{lemma:transvection}. Then $[v_{a_1}], \dots, [v_{a_{2k}}]$ are linearly independent in $V$, and can be normalized so that $\alpha_{a_i}(v_{a_{i+1}}) = - \alpha_{a_{i+1}}(v_{a_i}) = 1$ for $i = 1, \dots, 2k-1$, and $\alpha_{a_i}(v_j) = 0$ for $\abs{i-j} > 1$.
 \end{lemma}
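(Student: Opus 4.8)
The plan is to build up the normalization one curve at a time along the chain, combining the three pairwise relations already established: Lemma~\ref{lemma:braid} (for $i(a_i,a_{i+1})=1$), Lemma~\ref{lemma:commute} (for $i(a_i,a_j)=0$ and $a_i,a_j$ non-homologous), and Lemma~\ref{lemma:LI} (linear independence of $v$'s and of $\alpha$'s for adjacent-or-disjoint-nonhomologous pairs). First I would observe that since the chain is standardly embedded, every pair $a_i,a_j$ with $|i-j|>1$ is disjoint and non-homologous (their classes are part of a linearly independent set), so Lemma~\ref{lemma:commute} gives $\alpha_{a_i}(v_{a_j})=\alpha_{a_j}(v_{a_i})=0$ immediately and unconditionally — this is the easy part and requires no normalization. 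The substance is therefore concentrated on consecutive pairs: Lemma~\ref{lemma:braid} tells us $\alpha_{a_i}(v_{a_{i+1}})\cdot\alpha_{a_{i+1}}(v_{a_i})=-1$, so in particular both factors are nonzero, and we have the freedom to rescale each $v_{a_i}$ (which forces a reciprocal rescaling of the meaning of "$\alpha_{a_i}$ evaluated on things", but note $\alpha_{a_i}$ itself is canonically $\rho(T_{a_i})-I$ divided by $v_{a_i}$, so rescaling $v_{a_i}\mapsto t_i v_{a_i}$ rescales $\alpha_{a_i}\mapsto t_i^{-1}\alpha_{a_i}$). Under $v_{a_i}\mapsto t_i v_{a_i}$, the quantity $\alpha_{a_i}(v_{a_{i+1}})$ transforms to $t_i^{-1}t_{i+1}\,\alpha_{a_i}(v_{a_{i+1}})$ and $\alpha_{a_{i+1}}(v_{a_i})$ transforms to $t_{i+1}^{-1}t_i\,\alpha_{a_{i+1}}(v_{a_i})$.

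Next I would choose the scalars inductively: fix $t_1=1$, and having chosen $t_1,\dots,t_i$, pick $t_{i+1}$ so that $t_i^{-1}t_{i+1}\,\alpha_{a_i}(v_{a_{i+1}})=1$, i.e. $t_{i+1}=t_i/\alpha_{a_i}(v_{a_{i+1}})$ (well-defined since the denominator is nonzero by Lemma~\ref{lemma:braid}). With this choice $\alpha_{a_i}(v_{a_{i+1}})=1$ for all $i$, and then Lemma~\ref{lemma:braid} forces $\alpha_{a_{i+1}}(v_{a_i})=-1$ automatically. Crucially, rescaling $v_{a_{i+1}}$ does not disturb the already-normalized relation $\alpha_{a_{i-1}}(v_{a_i})=-1$ (that involves only $v_{a_{i-1}},v_{a_i}$, both already fixed) nor any of the vanishing relations $\alpha_{a_i}(v_{a_j})=0$ for $|i-j|>1$, since those are already zero and stay zero under scaling. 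So the normalization propagates consistently down the chain.

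It remains to prove linear independence of $[v_{a_1}],\dots,[v_{a_{2k}}]$. This follows from the normalized pairing data: suppose $\sum_j c_j v_{a_j}=0$. Apply $\alpha_{a_i}$ to this relation; by the computed pairings $\alpha_{a_i}(v_{a_i})=0$, $\alpha_{a_i}(v_{a_{i\pm1}})=\pm1$, and $\alpha_{a_i}(v_{a_j})=0$ otherwise, so we get $c_{i-1}-c_{i+1}=0$ (with the convention $c_0=c_{2k+1}=0$). This gives $c_1=c_3=c_5=\cdots$ and $c_2=c_4=\cdots$ after accounting for the boundary; working the chain of equations $c_{i-1}=c_{i+1}$ from the ends $c_0=0$ and $c_{2k+1}=0$ inward forces all $c_j=0$. (Concretely, $c_2=c_0=0$, $c_4=c_2=0$, etc., killing all even-indexed coefficients; and $c_{2k-1}=c_{2k+1}=0$, $c_{2k-3}=c_{2k-1}=0$, etc., killing all odd-indexed ones.) The main obstacle — such as it is — is purely bookkeeping: making sure the inductive rescaling does not retroactively break earlier relations, which is why I would emphasize the transformation law $v_{a_i}\mapsto t_i v_{a_i}$, $\alpha_{a_i}\mapsto t_i^{-1}\alpha_{a_i}$ and check each relation type against it; there is no deep difficulty, since all the real content is packaged in Lemmas~\ref{lemma:braid} and~\ref{lemma:commute}.
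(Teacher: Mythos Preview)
Your proof is correct and follows essentially the same approach as the paper's. Your rescaling $t_{i+1}=t_i/\alpha_{a_i}(v_{a_{i+1}})$ is exactly the reciprocal of the paper's constant $c_i=\prod_{j<i}\alpha_{a_j}(v_{a_{j+1}})$, and your linear-independence argument (applying the $\alpha_{a_i}$ to a putative dependence relation and propagating from the endpoints) is the same tridiagonal trick the paper uses, with the only cosmetic difference that the paper proves independence before normalizing rather than after.
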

 \begin{proof}
     To see that $\{[v_{a_i}]\}$ is linearly independent, suppose
     \[
     w = c_1 v_{a_1} + \dots + c_{2k} v_{a_{2k}} = 0.
     \]
     Applying $T_{a_1}$ and applying \Cref{lemma:braid} and \Cref{lemma:commute},
     \[
     0=T_{a_1}(w) = w + \alpha_{a_1}(w)v_a = c_2\alpha_{a_1}(v_{a_2}) v_a,
     \]
     showing that $c_2 = 0$. Successively applying $T_{a_3}, \dots, T_{a_{2k-1}}$ then shows that all coefficients $c_{2i} = 0$. Working from the other end, applying $T_{a_{2k}}$ shows that $c_{2k} = 0$; working backwards applying $T_{a_{2k-2}}, \dots, T_{a_{2}}$ then shows that all remaining coefficients $c_{2i+1} = 0$ as well. 

     The desired normalization can be defined by setting
     \[
     c_i = \prod_{j = 1}^{i-1} \alpha_{a_j}(v_{j+1})
     \]
     and then defining
     \[
     \alpha'_{a_i} = c_i \alpha_{a_i} \qquad \mbox{and} \qquad v_{a_i}' = \frac{v_{a_i}}{c_i}.
     \]
     It is then a routine calculation to verify that $\alpha'_i(v'_{a_{i+1}}) = 1$ and the other claimed relations.
 \end{proof}
    
We come to the main result of the section.

\begin{proposition}\label{prop:kgtriv}
   Let $\rho: \Mod(S) \to \GL(V)$ be a transvective representation. Then the restriction of $\rho$ to $\cK(S)$ is trivial.
\end{proposition}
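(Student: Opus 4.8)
The plan is to combine Johnson's theorem (\Cref{theorem:johnsonII}) — that $\cK(S)$ is generated by separating twists of genus one and two — with the rigid matrix structure for transvective representations developed in \Cref{lemma:transvection}--\ref{lemma:normalize}. By \Cref{prop:unipotence} and \Cref{prop:codimenison}, in the range $0 \le m \le \maxm$ any nontrivial $\rho$ is transvective, so this structure is available. First I would handle separating twists of genus one: a genus-one separating curve $e$ bounds a one-holed torus, and $T_e$ can be written via the chain relation in terms of two nonseparating curves $a,b$ with $i(a,b)=1$, namely $T_e = (T_aT_b)^6$. Using the explicit $2\times 2$ block formula $\rho(T_a)(x) = x + \alpha_a(x)v_a$ etc., together with \Cref{lemma:braid} ($\alpha_a(v_b)\alpha_b(v_a) = -1$), one computes $\rho((T_aT_b)^6)$ and shows it acts trivially on the span of $v_a, v_b$ and — because $\alpha_a, \alpha_b$ are the only covectors involved and the $6$th power of the relevant $2\times 2$ unipotent-generated product is the identity — trivially on the last column as well, hence $\rho(T_e) = I$.

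Next I would treat separating twists of genus two. Here $T_e$ with $e$ bounding a genus-two subsurface sits in a chain of length four, $a,b,c,d$ standardly embedded, via the relation expressing $T_e$ (or rather the boundary multitwist, but for a one-holed genus-two piece just $T_e$) as a word in $T_a,\dots,T_d$ — concretely, $T_e = (T_aT_bT_cT_d)^{10}$ up to the standard chain relation for a chain of even length. Using the $4\times 4$ block matrices displayed just before \Cref{prop:kgtriv}, I would compute the product $\rho(T_aT_bT_cT_d)$ as a block upper-triangular matrix $\left(\begin{smallmatrix} M & w \\ 0 & 1\end{smallmatrix}\right)$, observe that $M \in \Sp(4,\C)$ is exactly the symplectic transvection product (so $M^{10} = I$ by the genuine chain relation, since the image of the chain relation under $\Psi$ holds), and then the only thing left is to show the accumulated last column vanishes after raising to the tenth power. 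Because $M^{10}=I$, the last column of $\left(\begin{smallmatrix} M & w \\ 0 & 1\end{smallmatrix}\right)^{10}$ is $(I + M + \cdots + M^9)w$; I would argue this is zero either by a direct eigenvalue/telescoping computation or by invoking that $\rho(T_e)$ must be unipotent (it is a product of conjugates of nonseparating twists — actually $T_e$ itself is not a nonseparating twist, so instead: $\rho(T_e)$ fixes a codimension-zero worth of vectors coming from the chain, forcing the last-column contribution to vanish).

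The hard part will be the genus-two computation: pinning down the exact power and the exact chain relation (length-four chains satisfy $(T_aT_bT_cT_d)^{10} = T_{e_1}T_{e_2}$ where $e_1,e_2$ bound the neighborhood of the chain, and when one uses a chain inside $S$ so that one of these boundary curves is nonseparating one must be careful), and then showing that the $v$-column contribution $(I + M + \cdots + M^9)w$ vanishes. I expect the cleanest route is: $\rho(T_e)$ commutes with $\rho(\Mod(R))$ for $R$ the complementary subsurface (of genus $g-2 \ge 2$), and with $\rho(T_{a_i})$ for the $a_i$ in the chain; since the $\alpha_{a_i}$ span a $4$-dimensional space of covectors and $\rho(T_e)$ preserves each $\ker \alpha_{a_i}$, while also $\rho(T_e)$ restricted to the chain block is trivial by the symplectic chain relation, one deduces $\rho(T_e) - I$ has image in $\Span{v_a,\dots,v_d}$ and kernel containing $\bigcap \ker\alpha_{a_i}$, and then a short argument using a second overlapping chain (rotating the chain by an element of $\Mod(S)$ fixing $e$) forces $\rho(T_e) - I = 0$. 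Once both genus-one and genus-two separating twists are killed, \Cref{theorem:johnsonII} immediately gives $\rho|_{\cK(S)} = \id$, completing the induction and, with \Cref{prop:whenDCS} and \Cref{cor:biafftriv}, the proof of \Cref{theorem:main}.
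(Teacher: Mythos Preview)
Your approach is essentially the paper's: reduce to separating twists of genus one and two via \Cref{theorem:johnsonII}, use the transvective structure coming from \Cref{prop:unipotence} and \Cref{prop:codimenison}, and then verify the chain relations in the explicit matrix model of \Cref{lemma:transvection}--\ref{lemma:normalize}. The paper uses the alternate chain relations $T_{\Delta_1}=(T_a^2T_b)^4$ and $T_{\Delta_2}=(T_a^2T_bT_cT_d)^8$ and simply squares the matrices out by hand; your choice of $(T_aT_b)^6$ and $(T_aT_bT_cT_d)^{10}$ is equally valid. Your telescoping idea for the off-block column is in fact cleaner than brute force: writing $\rho(T_{a_1}\cdots T_{a_k})=\left(\begin{smallmatrix}M&w\\0&I\end{smallmatrix}\right)$, the $4\times4$ block $M$ for the length-four chain has characteristic polynomial $\Phi_{10}(\lambda)=\lambda^4-\lambda^3+\lambda^2-\lambda+1$, so $1$ is not an eigenvalue, $M-I$ is invertible, and $\sum_{j=0}^{9}M^j=(M^{10}-I)(M-I)^{-1}=0$; the genus-one case is identical with $\Phi_6$. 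This disposes of the ``last column'' in one stroke.

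Two points to clean up. First, your parenthetical about $(T_aT_bT_cT_d)^{10}=T_{e_1}T_{e_2}$ is wrong: a chain of \emph{even} length $k$ has regular neighborhood with a \emph{single} boundary component, and the relation is $(T_{a_1}\cdots T_{a_k})^{2k+2}=T_d$; you had it right the first time. Second, your fallback arguments (unipotence of $\rho(T_e)$, overlapping chains) are both unnecessary and, as written, not correct---$T_e$ is a separating twist, so \Cref{prop:unipotence} says nothing about it directly, and the overlapping-chain sketch is too vague to constitute an argument. Drop these and just run the telescoping computation; that is a complete proof.
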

\begin{proof}
    Let $a,b,c,d$ be a standardly-embedded chain of length $4$. Following \Cref{lemma:normalize}, we can extend $v_a, v_b, v_c, v_d$ to a basis of $V$. In such a basis, after normalizing {\em \`a la} \Cref{lemma:normalize}, the elements $\rho(T_a), \dots, \rho(T_d)$ have the following matrix expressions:
\begin{align*}
    \rho(T_a) = \left( \begin{array}{cccc|c}
                    1 & 1 & 0 & 0 & \alpha_a\\
                    0 & 1 & 0 & 0 & 0\\
                    0 & 0 & 1 & 0 & 0\\
                    0 & 0 & 0 & 1 & 0\\ \hline
                    0 & 0 & 0 & 0 & I
                \end{array} \right) &\qquad
                \rho(T_b) = \left( \begin{array}{cccc|c}
                    1 & 0 & 0 & 0 & 0\\
                    -1 & 1 & 1 & 0 & \alpha_b\\
                    0 & 0 & 1 & 0 & 0\\
                    0 & 0 & 0 & 1 & 0\\ \hline
                    0 & 0 & 0 & 0 & I
                \end{array} \right)\\
                \rho(T_c) = \left( \begin{array}{cccc|c}
                    1 & 0 & 0 & 0 & 0\\
                    0 & 1 & 0 & 0 & 0\\
                    0 & -1 & 1 & 1 & \alpha_c\\
                    0 & 0 & 0 & 1 & 0\\ \hline
                    0 & 0 & 0 & 0 & I
                \end{array} \right) &\qquad
                \rho(T_d) = \left( \begin{array}{cccc|c}
                    1 & 0 & 0 & 0 & 0\\
                    0 & 1 & 0 & 0 & 0\\
                    0 & 0 & 1 & 0 & 0\\
                    0 & 0 & -1 & 1 & \alpha_d\\ \hline
                    0 & 0 & 0 & 0 & I
                \end{array} \right).
\end{align*}
Here, it should be emphasized that the last column is actually a block, recording what happens to basis vectors beyond $v_a, \dots, v_d$. The entries $\alpha_a, \dots, \alpha_d$ are in fact covectors (row vectors), expressing these functionals in the chosen coordinates.
    
    By \Cref{theorem:johnsonII}, $\cK(S)$ is generated by separating twists of genus $1$ and $2$, and so it suffices to examine these two mapping classes. Let $a,b,c,d \subset S$ be a standardly-embedded chain. Then a regular neighborhood of $a \cup b$ is a subsurface of genus $1$, bounded by a separating curve $\Delta_1$. There is an alternate form of the chain relation \cite[Section 4.4.1]{FM} which asserts
    \[
    T_{\Delta_1} = (T_a^2 T_b)^4.
    \]
    A computation shows that
    \begin{equation}\label{eqn:aab}
        \rho(T_a^2T_b) = \left( \begin{array}{cccc|c}
                    -1 & 2 & 0 & 0 & 2(\alpha_a + \alpha_b )\\
                    -1 & 1 & 1 & 0 & \alpha_b \\
                    0 & 0 & 1 & 0 & 0\\
                    0 & 0 & 0 & 1 & 0\\ \hline
                    0 & 0 & 0 & 0 & I
                \end{array} \right)
    \end{equation}
    and hence
   \[
              \rho(T_a^2T_b)^2 = \left( \begin{array}{cccc|c}
                    -1 & 0 & 2 & 0 & 2\alpha_b \\
                    0 & -1 & 2 & 0 & -2\alpha_a \\
                    0 & 0 & 1 & 0 & 0\\
                    0 & 0 & 0 & 1 & 0\\ \hline
                    0 & 0 & 0 & 0 & I
                \end{array} \right);
    \]
    a final squaring shows that $\rho(T_{\Delta_1}) = \rho(T_a^2 T_b)^4 = I$ as required.

    The computation for a separating twist of genus $2$ proceeds along the same lines. We use the alternate formulation of the chain relation
    \[
    T_{\Delta_2} = (T_a^2 T_b T_c T_d)^8,
    \]
    where $\Delta_2$ is the boundary of the surface of genus $2$ containing $a,b,c,d$. Picking up from \eqref{eqn:aab},
    \[
    \rho(T_a^2 T_b T_c T_d) = \left( \begin{array}{cccc|c}
                    -1 & 0 & 0 & 2 & 2(\alpha_a  + \alpha_b + \alpha_c  + \alpha_d )\\
                    -1 & 0 & 0 & 1 & \alpha_b  + \alpha_c  + \alpha_d \\
                    0 & -1 & 0 & 1 & \alpha_c  + \alpha_d \\
                    0 & 0 & -1 & 1 & \alpha_d \\ \hline
                    0 & 0 & 0 & 0 & I
                \end{array} \right).
    \]
    Taking successive powers yields
    \[
    \rho(T_a^2 T_b T_c T_d)^2 = \left( \begin{array}{cccc|c}
                    1 & 0 & 2 & 0 & 2\alpha_d \\
                    1 & 0 & -1 & -1 & -2 \alpha_a  -\alpha_b  - \alpha_c \\
                    1 & 0 & -1 & 0 & -\alpha_b  + \alpha_d \\
                    0 & 1 & -1 & 0 & -\alpha_c  + \alpha_d \\ \hline
                    0 & 0 & 0 & 0 & I
                \end{array} \right)
    \]
    and
    \[
    \rho(T_a^2 T_b T_c T_d)^4 = \left( \begin{array}{cccc|c}
                    -1 & 0 & 0 & 0 & 2(\alpha_b + \alpha_d )\\
                    0 & -1 & 0 & 0 & -2 \alpha_a \\
                    0 & 0 & -1 & 0 & 2\alpha_d \\
                    0 & 0 & 0 & -1 & -2(\alpha_a  + \alpha_c )\\ \hline
                    0 & 0 & 0 & 0 & I
                \end{array} \right),
    \]
    and a final squaring yields $\rho(T_a^2T_bT_cT_d)^8 = I$.
\end{proof}

\section{Proof of \Cref{theorem:main}}

\begin{proof}[Proof of \Cref{theorem:main}]
    We will show that any representation $\rho: \Mod(S) \to \GL(2g+n, \C)$ for $g \ge 3$ and $0 \le n \le g-3$ is either trivial or else bi-affine with core $H$. The classification given in \Cref{theorem:main} then follows by \Cref{cor:biafftriv}.

    This will be established by induction on the pairs $(g,n)$ (again, with $g \ge 3$ and $0 \le n \le 3g-3$), endowed with the lexicographic ordering. The base case $(3,0)$ follows from Korkmaz (\Cref{thm:korkmaz2g}). Now let $(g,n)$ be given (in particular with $g \ge 4$), and assume that the result holds for any pair $(g',n')$ with $g' <g$ or else with $g' = g$ and $n' < n$. By \Cref{prop:unipotence}, $\rho(T_c)$ is unipotent for any nonseparating simple closed curve $c \subset S$. Invoking the inductive hypothesis, the hypotheses of \Cref{prop:codimenison} are satisfied, and so either $\rho$ is trivial, or else the $1$-eigenspace $E_1^c$ of $\rho(T_c)$ has codimension $1$. In this case, $\rho$ is transvective (\Cref{def:transvective}), and so by \Cref{prop:kgtriv}, the restriction of $\rho$ to $\cK(S)$ is trivial. Then by \Cref{prop:whenDCS}, it follows that $\rho$ is bi-affine with core $H$.
\end{proof}

\bibliography{references}{}
\bibliographystyle{alpha}
\end{document}